\renewcommand{\div}{\operatorname{div}}
\newcommand{\Rr}{{\mathbb{R}}}
\newcommand{\Nn}{{\mathbb{N}}}
\newcommand{\Tt}{{\mathbb{T}}}
\newcommand{\Ll}{{\mathcal{L}}}
\newcommand{\Aa}{{\mathcal{A}}}
\newcommand{\Bb}{{\mathcal{B}}}
\newcommand{\Mm}{{\mathcal{M}}}
\newcommand{\epsi}{\epsilon}
\def\d{{\rm d}}
\def\dx{{\rm d}x}
\def\dy{{\rm d}y}
\def\dt{{\rm d}t}
\def\ds{{\rm d}s}
\def\leq{\leqslant}
\def\geq{\geqslant}
\numberwithin{equation}{section}
\newtheoremstyle{thmlemcorr}{10pt}{10pt}{\itshape}{}{\bfseries}{.}{10pt}{{\thmname{#1}\thmnumber{
#2}\thmnote{ (#3)}}}
\newtheoremstyle{thmlemcorr*}{10pt}{10pt}{\itshape}{}{\bfseries}{.}\newline{{\thmname{#1}\thmnumber{
\newtheoremstyle{defi}{10pt}{10pt}{\itshape}{}{\bfseries}{.}{10pt}{{\thmname{#1}\thmnumber{
#2}\thmnote{ (#3)}}}
\newtheoremstyle{remexample}{10pt}{10pt}{}{}{\bfseries}{.}{10pt}{{\thmname{#1}\thmnumber{
#2}\thmnote{ (#3)}}}
\newtheoremstyle{ass}{10pt}{10pt}{}{}{\bfseries}{.}{10pt}{{\thmname{#1}\thmnumber{
A#2}\thmnote{ (#3)}}}
\theoremstyle{thmlemcorr}
\newtheorem{theorem}{Theorem}
\numberwithin{theorem}{section}
\theoremstyle{thmlemcorr*}
\newtheorem{theorem*}{Theorem}
\newtheorem{lemma*}[theorem]{Lemma}
\newtheorem{corollary*}[theorem]{Corollary}
\newtheorem{proposition*}[theorem]{Proposition}
\newtheorem{problem*}[theorem]{Problem}
\newtheorem{conjecture*}[theorem]{Conjecture}
\theoremstyle{defi}
\newtheorem{definition}[theorem]{Definition}
\newtheorem{hyp}{Assumption}
\newtheorem{problem}{Problem}
\theoremstyle{remexample}
\newtheorem{remark}[theorem]{Remark}
\newtheorem{teo}[theorem]{Theorem}
\newtheorem{lem}[theorem]{Lemma}
\newtheorem{pro}[theorem]{Proposition}
\newtheorem{cor}[theorem]{Corollary}
\newtheorem*{notation}{Notation}
\theoremstyle{ass}
\begin{document}

\title[Time-dependent monotone Mean-Field Games]{Existence of weak solutions \\to time-dependent Mean-Field Games}

\author{Rita Ferreira}
\address[R. Ferreira]{
        King Abdullah University of Science and Technology (KAUST), CEMSE Division, Thuwal 23955-6900, Saudi Arabia.}
\email{rita.ferreira@kaust.edu.sa}
\author{Diogo Gomes}
\address[D. Gomes]{
        King Abdullah University of Science and Technology (KAUST), CEMSE Division, Thuwal 23955-6900, Saudi Arabia.}
\email{diogo.gomes@kaust.edu.sa}
\author{Teruo Tada}
\address[T. Tada]{
       King Abdullah University of Science and Technology (KAUST), CEMSE Division, Thuwal 23955-6900, Saudi Arabia.}
\email{teruo.tada@kaust.edu.sa}

\keywords{Nonlocal Mean-Field Games; Time-dependent Problems. }

\thanks{\textit{MSC (2010):}  35K65, 35K10, 49N70, 91A13}

\thanks{R. Ferreira, D. Gomes, and T. Tada were partially supported by baseline and start-up funds from King Abdullah University of Science and Technology (KAUST) OSR-CRG2017-3452.}
\date{\today}

\begin{abstract}
Here, we establish the existence of weak solutions to a wide class of time-dependent monotone mean-field games (MFGs).
These MFGs are given as a system of degenerate parabolic equations with initial and terminal conditions. To construct these solutions, we consider a high-order elliptic regularization in space-time. Then, using Schaefer's fixed-point theorem, we obtain the existence and uniqueness for this regularized problem. Using Minty's method, we prove the existence of a weak solution to the original MFG. Finally, the paper ends with a discussion on congestion problems and density constrained MFGs.
\end{abstract}

\maketitle
%\tableofcontents

\section{Introduction}\label{intro}
To model the behavior of large populations of competing rational agents, 
Lasry and Lions, in \cite{ll1}, \cite{ll2}, and \cite{ll3}, and, independently, Caines, Huang, and Malham\'{e}, in \cite{Caines2}, \cite{Caines1}, introduced a class of problems now called
mean-field games (MFGs). In these games, agents
are indistinguishable and
seek to minimize an individual cost that depends on the
statistical distribution of the population. 

Here, we consider the following time-dependent MFG with space-periodic boundary conditions.
\begin{problem}\label{OP}
Let $T>0$ and \(d\in\Nn\), and define  $\Omega_T=(0,T)\times\Tt^d$,
where \(\Tt^d\) is the \(d\)-dimensional torus. Let $X(\Omega_T)$ and $\Mm_{ac}(\Omega_T)$ denote, respectively, the space of measurable functions on $\Omega_T$ and
the space of positive measures
on $\Omega_T$ that are finite and absolutely continuous with respect to the Lebesgue
measure. Fix $\gamma>1$ and $k\in \Nn$  such that $2k >\frac{d+1}{2} + 3$.   Assume that $a_{ij}\in C^2(\Tt^d)$ for $1\leq i,j\leq d$, $V \in L^{\infty}(\Omega_T)\cap C(\Omega_T)$, $g\in C^1(\Rr_0^+\times \Rr)$, $h:\Mm_{ac}(\Omega_T)\to X(\Omega_T)$ is a (possibly nonlinear) operator, 
$m_0$, $u_T\in C^{4k}(\Tt^d)$, and $H\in C^2(\Tt^d\times\Rr^d)$ are such that $A(x)=(a_{ij}(x))$ is a symmetric positive semi-definite matrix for each  $x\in \Tt^d$, $m_0>0$, $\int_{\Tt^d}m_0(x)\, \dx=1$, and $m\mapsto g(m,h(\boldsymbol{m}))$ is monotone with respect to the $L^2$-inner product.
Find $(m,u)\in L^1(\Omega_T)\times L^\gamma((0,T);W^{1,\gamma}(\Tt^d))$ satisfying $m\geq0$ and
\begin{equation}
\label{Highd1o}
\begin{aligned}
\begin{cases}
%\displaystyle 
u_t + \sum_{i,j=1}^da_{ij}(x)u_{x_ix_j} - H(x,Du) +g(m,h(\boldsymbol{m}))
+ V(t,x)=0 & \text{in } \Omega_T, \\[1mm]
% 
%\displaystyle 
m_t - \sum_{i,j=1}^d(a_{ij}(x)m)_{x_ix_j}-\div\big(mD_pH(x,Du)\big)
= 0 & \text{in } \Omega_T, \\[1mm]
\displaystyle m(0,x)=m_{0}(x),u(T,x)=u_T(x) & \text{on } \Tt^d.
\end{cases}
\end{aligned}
\end{equation}
\end{problem}
The first equation in \eqref{Highd1o} is a Hamilton--Jacobi equation
that determines the value function, \(u\), for a typical agent. The second equation, 
the Fokker--Planck equation, gives the evolution of the distribution
of the agents, $m$. The initial-terminal conditions for $u$ and $m$
in \eqref{Highd1o} model the case where the initial distribution, $m_0$, of the agents is known, and agents seek to optimize a control problem with terminal cost $u_T$. 
 In Problem~\ref{OP}, and in the sequel, the elements of $\Mm_{ac}(\Omega_T)$ are denoted with a boldface font and  their
 densities with the same non-boldface letter. Hence, we define $\boldsymbol{m}$ as $\boldsymbol{m}:=m \Ll^{d+1}\lfloor_{\Omega_T}$, where $\Ll^{d+1}$ is the $(d+1)$-dimensional Lebesgue measure. Moreover, we write \(g(m,h(\boldsymbol{m}))(x,t)\)  in place
of \(g(m(x,t),h(\boldsymbol{m})(x,t))\). 
The coupling, $g$, between the Hamilton--Jacobi equation
and the Fokker--Planck equation comprises  
a ``local" dependence, via the dependence on \(m\),  and a non-local 
dependence, via the operator 
 $h$ evaluated at \(\boldsymbol{m}\).
 %, which is possibly nonlinear.
This coupling describes the interactions
between agents and the mean-field.
Because agents solve a control problem, the Hamiltonian, 
 $H=H(x,p)$, is convex in $p$ (Assumption~\ref{Hconv} below);
moreover, the associated Lagrangian, \(L=L(x,v)=\sup_p \{ -p\cdot v-H(x,p)\}\), gives the agent's cost to move at speed $v$. 
 %{\color{red}{Missing interpretation of \(a_{ij}\).}}
The matrix $A=(a_{ij})$ yields the diffusion for agents. 
Finally, the potential, $V$, determines the spatial and time preferences of each agent.

In recent years, MFGs have been studied intensively.
%From the regularity properties of the Laplacian, there are many well-established results.
Thanks to the regularizing properties of the Laplacian, both elliptic and parabolic MFGs are now well-understood.
For example, the existence of solutions to second-order time-dependent MFGs without congestion was examined in \cite{cgbt}, \cite{ciranttonon}, \cite{Gomes2015b}, \cite{Gomes2016c}, \cite{GPM2}, \cite{GPM3}, and \cite{GP2}. 
Time-dependent cases with congestion were investigated in \cite{GVrt2}, \cite{porretta}, and \cite{porretta2}. The time-dependent MFG with nonlocal coupling is studied in \cite{cllp13}.

As we explain next, 
various time-dependent first-order MFGs models were examined 
by several authors. 
 Each of the models presents distinct difficulties
 that are addressed with methods that rely heavily on the structure of the
particular problem. 
In \cite{CaCaCa2018}, the authors assume that the Lagrangian is quadratic.
In \cite{Cd1}, the existence and uniqueness of solution was examined when $H(\cdot,p)$ is quadratic-like and $g$ is Lipschitz continuous and $g(\cdot,m)$ is bounded for the $C^2$-norm.
In \cite{Cd2} and \cite{GraCard}, the authors suppose that the growth of $g(\cdot,m)$ and the growth of $H(\cdot, p)$ are of the form $m^{q-1}$ and $|p|^r$ respectively, where $q>1$ and $r>\max\{d(q-1),1\}$. 
In \cite{San16}, the density $m$ satisfies $0\leq m(t,x)\leq \overline m$ for all $(t,x)$, where $\overline m$ is given and $\overline m>1$.
In \cite{graber2017sobolev}, the authors assume only that the growth of $H$ is greater than $|p|^r$  and $g$ satisfies $\frac{1}{C} |m|^{q-1}\leq g(x,m)\leq C|m|^{q-1}+C$ for all $m\geq1$ and $x\in \Tt^d$, where $r>1$, $q>1$, and $C$ is a constant.
In \cite{Mayorga2018}, the existence and uniqueness of short-time solution to first-order time-dependent MFGs is addressed when $H$ is only of class $C^3$ in space; that is, we do not need convexity nor coercivity for $H$.
In \cite{OrPoSa2018}, the growth of $g$ is $m^{q-1}$, where $q>1$, and $H$ satisfies $\frac{1}{2c_H}|p|^2 -\gamma_H^-(x)\leq H(x,p)\leq \frac{c_H}{2}|p|^2 + \gamma^+_H(x)$, where $\gamma_H^-(x)=c_1 (1+|x|)$ and $\gamma_H^+(x)=c_2 (1+|x|^2)$ with $c_1, c_2>0$.
In \cite{PrSa2016}, the authors focus on $H(p)=\frac{1}{2}|p|^2$ and assume that $G'(m)=g(m)$ and $G$ is superlinear and strictly convex.
However, the degenerate case is less well studied. In \cite{FG2}, the authors constructed a monotonicity method to solve the stationary MFGs with degenerate terms and non-local terms. 
This method is one of the few tools that can be applied to a diverse class of MFGs - local and non-local, with or without congestion, first or second-order (including possibly degenerate problems).  
Therefore, in this paper, we extend the monotonicity method to time-dependent MFGs with degenerate terms and non-local terms to construct weak solutions to Problem~\ref{OP} for any terminal time (see Section~\ref{PfMT2}).
%The main motivation of this work is to extend the monotonicity methods, which were addressed in \cite{FG2} and \cite{FGT1}, to time-dependent MFGs and construct weak solutions to Problem~\ref{OP} for any terminal time (see Section~\ref{PfMT2}).
%monotonicity methods are one of the few tools that can be applied to a diverse class of MFGs - local and non-local, with or without congestion, first or second-order (including possibly degenerate problems).  

Problem~\ref{OP} encompasses multiple difficulties: the second-order terms may be degenerate,{\tiny } and the coupling can include both local and non-local terms.
Using monotonicity methods, we prove the existence of weak solutions under a standard set of assumptions discussed in Section~\ref{Ass}.

%Definition of Weak solutions
Let $m_0$ and $u_T$ be as in Problem~\ref{OP}.
Throughout this paper, $\Aa$, $\widehat\Aa$, $\Aa_0$, $\Bb,$ and $\Bb_0$ are the sets 
\begin{align}
\Aa&:= \Big\{m \in H^{2k}(\Omega_T)\ |\ m(0,x)=m_0(x),\ m\geq 0\Big\} \label{DAa},\\
\widehat\Aa&:=
\left\{
m\in \Aa\ \Big|\ \int_{\Tt^d}m(t,x)\,\dx=1\ \mbox{for a.e. }t\in[0,T]
\right\} \label{DAawh},\\
\Aa_0&:=\Big\{m \in H^{2k}(\Omega_T)\ |\ m(0,x)=0,\ m+m_0\geq 0\Big\}\label{DAa0},\\
\Bb&:=\Big\{w\in H^{2k}(\Omega_T)\ |\ w(T,x)=u_T(x) \Big\}, \label{DBb}
\\
\Bb_0&:=\Big\{w\in H^{2k}(\Omega_T)\ |\ w(T,x)=0 \Big\} \label{DBb0}.
\end{align}
%
%Original problem weak solution
\begin{definition}\label{DOPWS1}
A weak solution to Problem \ref{OP} is a pair $(m,\tilde u) \in L^1(\Omega_T) \times L^\gamma((0,T);W^{1,\gamma}(\Tt^d))$ satisfying
\begin{flalign}
({\rm D1})\enspace  &
% u(T,x)=u_T(x)\ \mbox{on}\ \Tt^d,\ 
m \geq 0 \ \mbox{a.e.~in }\Omega_T,  &\nonumber \\ 
({\rm D2})\enspace & \, \left\langle
   F
  \begin{bmatrix}
      \eta \\
      v
  \end{bmatrix},
  \begin{bmatrix}
      \eta  \\
      v
  \end{bmatrix}
  -
  \begin{bmatrix}
      m  \\
      \tilde u
  \end{bmatrix}
\right\rangle \geq 0
\quad \mbox{for all}\ (\eta, v)\in \widehat\Aa \times \Bb,&\nonumber
\end{flalign}
where $F:H^{2k}(\Omega_T; \Rr_0^+) \times H^{2k}(\Omega_T) \to (L^1(\Omega_T)\times L^1(\Omega_T))^\ast$ 
is given by
\begin{equation}\label{DRF2}
\begin{aligned}
\left\langle
   F
  \begin{bmatrix}
      \eta \\
      v
  \end{bmatrix},
  \begin{bmatrix}
      w_1 \\
      w_2
  \end{bmatrix}
\right\rangle%_{\Dd' \times \Dd',
%C^{\infty}\times C^{\infty}}
:=&\,
\int_0^T\!\!\!\int_{\Tt^d}\bigg( v_t + \sum_{i,j=1}^da_{ij}(x)v_{x_ix_j} -H(x,Dv) + g(\eta, h(\boldsymbol{\eta})) + V(t,x)\bigg)w_1 \, \dx\dt\\ 
&\quad+\int_0^T\!\!\!\int_{\Tt^d}\bigg(
\eta_t - \sum_{i,j=1}^d(a_{ij}(x)\eta)_{x_ix_j} - \div\big(\eta D_pH(x,Dv)\big) \bigg)w_2\, \dx\dt .
\end{aligned}
\end{equation}
\end{definition}

Here, we establish the existence of weak solutions to Problem~\ref{OP} as stated in the following theorem. 
%The proof is similar to the idea in \cite{FGT1}.
%Now, we state our main result in the following.
\begin{teo}\label{TOP}
Consider Problem \ref{OP} and suppose 
that Assumptions \ref{Hconv}--\ref{Hmono} hold. Then, there exists a weak solution $(m,\tilde u)\in L^1(\Omega_T) \times L^\gamma((0,T);W^{1,\gamma}(\Tt^d))$ to Problem \ref{OP} in the sense of Definition~\ref{DOPWS1}.
\end{teo}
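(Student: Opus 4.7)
The plan is to follow the strategy outlined in the abstract: introduce a high-order elliptic regularization in space-time, solve the regularized problem via Schaefer's fixed-point theorem to obtain a smooth approximate solution $(m_\epsilon,u_\epsilon)$, derive $\epsilon$-uniform a priori estimates, and finally pass to the limit by exploiting the monotonicity of $F$ through Minty's method.

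\textbf{Step 1 (Regularization).} For $\epsilon>0$, I would introduce a regularized operator $F_\epsilon$ obtained from $F$ by adding an $H^{2k}$-coercive term of the form $\epsilon(I+(-\Delta)^{2k})\eta$ to the Fokker--Planck line and $\epsilon(I+(-\Delta)^{2k})v$ to the Hamilton--Jacobi line (adjusted at the temporal boundary so that the initial and terminal conditions are retained). The choice $2k>\frac{d+1}{2}+3$ ensures the Sobolev embedding $H^{2k}(\Omega_T)\hookrightarrow C^3(\Omega_T)$, so that the nonlinearities $H(x,Dv)$, $D_pH(x,Dv)$, $g(\eta,h(\boldsymbol{\eta}))$, and the (possibly degenerate) second-order terms $(a_{ij}\eta)_{x_ix_j}$ are classically well-defined for every $(\eta,v)\in H^{2k}\times H^{2k}$.

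\textbf{Step 2 (Fixed point for the regularized system).} Freezing the nonlinear coupling $g(\bar{m},h(\boldsymbol{\bar{m}}))$ and the drift $D_pH(x,D\bar{u})$ in the nonlinear system linearizes it, and solving the two decoupled linear $2k$-order equations defines a map $\Phi_\epsilon:(\bar m,\bar u)\mapsto(m,u)$ on $\widehat\Aa\times\Bb$. Elliptic regularity for $\Phi_\epsilon$ plus the Rellich embedding $H^{2k+1}\hookrightarrow H^{2k}$ makes $\Phi_\epsilon$ compact and continuous. Energy estimates exploiting the MFG monotone structure (testing the HJ line against $m$ and the FP line against $u-u_T$, and using convexity of $H$ in $p$ and monotonicity of $g(\cdot,h(\boldsymbol{\cdot}))$) yield uniform bounds for solutions of $(m,u)=\lambda\Phi_\epsilon(m,u)$, $\lambda\in[0,1]$. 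Schaefer's theorem then produces a fixed point $(m_\epsilon,u_\epsilon)\in\widehat\Aa\times\Bb$ with $F_\epsilon[m_\epsilon,u_\epsilon]=0$; uniqueness for fixed $\epsilon$ follows from the strong monotonicity of $F_\epsilon$.

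\textbf{Step 3 ($\epsilon$-uniform estimates).} Pairing the HJ equation of the regularized system with $m_\epsilon$ and the FP equation with $u_\epsilon$, adding and integrating by parts (using the symmetry of $A$), the degenerate second-order terms cancel and the cross terms reorganize into
\begin{equation*}
\int_{\Omega_T} m_\epsilon\bigl(D_pH(x,Du_\epsilon)\cdot Du_\epsilon - H(x,Du_\epsilon)\bigr)\,\dx\dt \;=\; \int_{\Omega_T} m_\epsilon L\bigl(x,-D_pH(x,Du_\epsilon)\bigr)\,\dx\dt,
\end{equation*}
plus the nonnegative coupling $\int_{\Omega_T} g(m_\epsilon,h(\boldsymbol{m_\epsilon}))m_\epsilon\,\dx\dt$ and boundary contributions controlled by $m_0$ and $u_T$. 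Combined with the coercivity hypotheses on $H$, this delivers $\epsilon$-independent bounds on $\|m_\epsilon\|_{L^1(\Omega_T)}$, on $\|Du_\epsilon\|_{L^\gamma(\Omega_T)}$, and on $\epsilon\|(m_\epsilon,u_\epsilon)\|_{H^{2k}}^2$. Extracting subsequences gives $m_\epsilon\rightharpoonup m$ in $L^1(\Omega_T)$ (with $m\geq 0$ a.e., yielding (D1)) and $u_\epsilon\rightharpoonup \tilde u$ in $L^\gamma((0,T);W^{1,\gamma}(\Tt^d))$.

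\textbf{Step 4 (Minty's trick) and main obstacle.} For any test pair $(\eta,v)\in\widehat\Aa\times\Bb$, monotonicity of $F_\epsilon$ (inherited from convexity of $H(x,\cdot)$, symmetry of $A$, and monotonicity of $m\mapsto g(m,h(\boldsymbol{m}))$) and $F_\epsilon[m_\epsilon,u_\epsilon]=0$ imply
\begin{equation*}
\left\langle F_\epsilon\begin{bmatrix}\eta\\ v\end{bmatrix}, \begin{bmatrix}\eta\\ v\end{bmatrix}-\begin{bmatrix}m_\epsilon\\ u_\epsilon\end{bmatrix}\right\rangle \;\geq\; 0.
\end{equation*}
Since $(\eta,v)$ is a \emph{fixed smooth} pair in $H^{2k}$, the regularization terms $\epsilon(I+(-\Delta)^{2k})\eta$, $\epsilon(I+(-\Delta)^{2k})v$ vanish strongly as $\epsilon\to 0$, and $F_\epsilon[\eta,v]$ (being bounded and continuous in $x,t$) pairs continuously with the weakly convergent $(m_\epsilon,u_\epsilon)$. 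Passing to the limit yields (D2) for $(m,\tilde u)$, completing the proof. The main obstacle is precisely that the degeneracy of $A$ forbids any strong compactness of $Du_\epsilon$, so $H(x,Du_\epsilon)$ and $m_\epsilon D_pH(x,Du_\epsilon)$ cannot be passed to the limit directly; Minty's method resolves this by only requiring limits of \emph{linear} pairings against smooth $(\eta,v)$. A secondary technical hurdle, confined to Step~2, is achieving $\epsilon$-uniform $H^{2k}$-bounds on the Schaefer iterates without collapsing into the $\epsilon\to 0$ estimates of Step~3.
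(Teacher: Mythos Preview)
Your overall strategy matches the paper's, but there is a genuine gap in Steps~3--4. You assert that $u_\epsilon\rightharpoonup\tilde u$ in $L^\gamma((0,T);W^{1,\gamma}(\Tt^d))$, but the a priori estimates only bound $\|Du_\epsilon\|_{L^\gamma(\Omega_T)}$ and $\sqrt{\epsilon}\,\|u_\epsilon\|_{H^{2k}}$; there is \emph{no} $\epsilon$-uniform bound on $\|u_\epsilon\|_{L^\gamma(\Omega_T)}$ itself (the terminal condition $u_\epsilon(T,\cdot)=u_T$ does not help, since $\partial_t u_\epsilon$ is uncontrolled). The paper handles this by subtracting the spatial mean: with $\langle u_\epsilon\rangle(t):=\int_{\Tt^d}u_\epsilon(t,x)\,\dx$ and $\tilde u_\epsilon:=u_\epsilon-\langle u_\epsilon\rangle$, the Poincar\'e--Wirtinger inequality in $x$ gives $\|\tilde u_\epsilon\|_{L^\gamma}\leq C\|Du_\epsilon\|_{L^\gamma}\leq C$, so it is $\tilde u_\epsilon$, not $u_\epsilon$, that converges weakly.

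This creates a second difficulty you do not address: the Minty inequality $\big\langle F_\epsilon[\eta,v],[\eta,v]-[m_\epsilon,u_\epsilon]\big\rangle\geq 0$ still involves $u_\epsilon$, and one must justify replacing it by $\tilde u_\epsilon$ before passing to the limit. The key observation (the paper's Lemma~\ref{Equtu}) is that if $\eta\in\widehat\Aa$, i.e.\ $\int_{\Tt^d}\eta(t,x)\,\dx\equiv 1$, then the Fokker--Planck component of $F[\eta,v]$ has zero spatial integral and is therefore orthogonal to any function of $t$ alone---in particular to $\langle u_\epsilon\rangle$. This is precisely why Definition~\ref{DOPWS1} restricts test pairs to $\widehat\Aa\times\Bb$ rather than $\Aa\times\Bb$, a point your proposal does not exploit. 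Two smaller inaccuracies: the regularized fixed point lies only in $\Aa\times\Bb$, not $\widehat\Aa\times\Bb$ (the $\epsilon$-term in the FP equation destroys exact mass conservation); and because of the constraint $m\geq0$ the regularized problem is a variational \emph{inequality}, so what holds is $\big\langle F_\epsilon[m_\epsilon,u_\epsilon],[\eta,v]-[m_\epsilon,u_\epsilon]\big\rangle\geq0$ rather than $F_\epsilon[m_\epsilon,u_\epsilon]=0$---though this weaker statement is still exactly what Minty's method requires.
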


To prove Theorem~\ref{TOP}, we introduce a regularized problem, Problem~\ref{MP} below. This regularized problem is obtained from Problem~\ref{OP} by adding a high-order elliptic regularization on $[0,T]\times \Tt^d$. 
Due to this regularization, and using Schaefer's fixed-point theorem, we can prove that there exists a unique weak solution to Problem~\ref{MP} (see Section~\ref{PfMT1}). Then, we consider the limit with respect to the regularization parameter, $\epsilon\to 0$, to obtain a weak solution to Problem~\ref{OP} (see Section~\ref{PfMT2}).

Before stating Problem~\ref{MP}, we introduce some notation regarding
partial derivatives used throughout this manuscript.

\begin{notation}
Let  $\Omega_T=(0,T)\times\Tt^d
$ be as in Problem~\ref{OP},  let \((t,x) = (t, x_1, ..., x_d)\) denote an arbitrary
point in \(\Omega_T\), and let \(\upsilon :\Omega_T\to\Rr\) be sufficiently regular so that the following
partial derivatives make sense, at least in a weak sense. Fix
\(i,\,j\in\{1,...d\}\), \(\ell\in\Nn\), \(\alpha=(\alpha_1, ..., \alpha_d)\in\Nn^d_0\),
and \(\beta=(\beta_0,\beta_1,...,\beta_d)\in \Nn^{d+1}_0\); in
this manuscript, we often write  
\begin{itemize}
\item[\(\triangleright\)]   \(\upsilon_t\)  in place of
\(\frac{\partial\upsilon}{\partial t}\) and \(\partial_t^\ell \upsilon\)
in place of
\(\frac{\partial^\ell \upsilon}{\partial t^\ell }\);

\item[\(\triangleright\)]  \(\upsilon_{x_i
x_j}\) in place of
\(\frac{\partial^2\upsilon}{\partial x_i \partial x_j}\)  and
\(\partial_x^\alpha \upsilon\)
in place of
\(\frac{\partial^{|\alpha|} \upsilon}{\partial x_1^{\alpha_1}
... \partial x_d^{\alpha_d} }\);
\item[\(\triangleright\)]  \(\partial_{t,x}^\beta \upsilon\)
in place of
\(\frac{\partial^{|\beta|} \upsilon}{\partial t^{\beta_0}\partial x_1^{\beta_1}
... \partial x_d^{\beta_d} }\).
\end{itemize}
Also, to simplify the notation, we often omit the domain of the
multi-index of a partial derivative. For instance, if we write
\(\partial_x^\alpha \upsilon\), we implicitly assume that \(\alpha\in
\Nn^d_0\), while if we write  
\(\partial_{t,x}^\beta \upsilon\), we implicitly assume that \(\beta\in
\Nn^{d+1}_0\). Similarly, we write \(\sum_{|\alpha|= \ell}
\partial_x^\alpha \upsilon\) in place of  \(\sum_{\alpha\in\Nn^d_0,|\alpha|= \ell}
\partial_x^\alpha \upsilon\), for instance.

%\textcolor[rgb]{0.984314,0.00784314,0.027451}{
To simplify the notation, we write \(Dv\) and \(\div v \) in place of \(D_x v\)  and \(\div_x v \) to denote the gradient and divergence of \(v\), respectively,  with respect to \(x\in \Tt^d\). %}
Finally,  \(\Delta_{x,t}\) denotes the Laplacian operator
with respect to the variables $(t,x)$; that is, \(\Delta_{x,t}\upsilon = \frac{\partial^2 \upsilon}{\partial t^2 } + \sum_{i=1}^d \frac{\partial^2 \upsilon}{\partial x_i^2 }\).

 \end{notation}

%\begin{flushleft}\textbf{Notation.}\end{flushleft} 

%Regularized problem
\begin{problem}\label{MP}

Let $T>0$ and \(d\in\Nn\), and define  $\Omega_T=(0,T)\times\Tt^d$.
 Let $X(\Omega_T)$ and $\Mm_{ac}(\Omega_T)$
be the spaces introduced in Problem~\ref{OP}. Fix $\epsilon\in(0,1)$ and $k\in \Nn$  such that $2k >\frac{d+1}{2}
+ 3$.   Assume that $a_{ij}\in C^2(\Tt^d)$ for $1\leq i,j\leq d$, $V\in L^\infty(\Omega_T)\cap C(\Omega_T)$, $\sigma$, $\xi \in C^{4k}(\overline{\Omega}_T)$, $g\in C^1(\Rr_0^+\times \Rr)$, $h:\Mm_{ac}(\Omega_T)\to X(\Omega_T)$ is a (possibly nonlinear) operator, 
$m_0$, $u_T\in C^{4k}(\Tt^d)$, and $H\in C^2(\Tt^d\times\Rr^d)$ are such that, for $x\in \Tt^d$, $A(x)=(a_{ij}(x))$ is a symmetric positive semi-definite matrix, $\sigma\geq 0$, $m_0> 0$, $\int_{\Tt^d}m_0(x)\,\dx=1$, and $m\mapsto g(m,h(\boldsymbol{m}))$ is monotone with respect to the $L^2$-inner product.  Find $(m, u) \in H^{2k}(\Omega_T) \times H^{2k}(\Omega_T)$ satisfying the condition $m\geq 0$, the system
\begin{equation}\label{HighdRP}
\begin{aligned}
\begin{cases}
%\displaystyle 
u_t + \sum_{i,j=1}^d a_{ij}(x)u_{x_ix_j} - H(x,Du) +g(m,h(\boldsymbol{m}))
+ V(t,x)  +\epsi\sum_{|\beta|\in\{0,2k\}} \partial^{2\beta}_{t,x} (m+\sigma) = 0 & \text{in } \Omega_T\\[1mm]
%
%\displaystyle
 m_t - \sum_{i,j=1}^d\big(a_{ij}(x)(m+\sigma)\big)_{x_ix_j}-
\div\big((m+\sigma)D_pH(x,Du)\big) +\epsilon\sum_{|\beta|\in\{0,2k\}} \partial^{2\beta}_{t,x} (u+\xi)
  = 0& \text{in } \Omega_T\\[1mm]
m(0,\cdot)=m_{0}(\cdot),\enspace u(T,\cdot)=u_T(\cdot) & \text{on } \Tt^d
\end{cases}
\end{aligned}
\end{equation}
and, for each $i\in\Nn$ with $2\leq i\leq 2k$,
 the boundary  conditions 
\begin{equation}\label{BCum}
\begin{aligned}
&\textstyle 
\sum_{j=1}^{2k} \partial^{2j-1}_t (L_j m) = 0 \enspace \text{on } \{T\}\times\Tt^d\quad \text{and}\quad \sum_{j=i}^{2k} \partial^{2j-i}_t (L_j m) = 0 \enspace \text{on
} \{0,T\}\times\Tt^d,  \\[1mm]
&\textstyle \sum_{j=1}^{2k} \partial^{2j-1}_t (L_j u) = 0 \enspace \text{on
} \{0\}\times\Tt^d\quad \text{and}\quad \sum_{j=i}^{2k} \partial^{2j-i}_t
(L_j u) = 0 \enspace \text{on
} \{0,T\}\times\Tt^d,
\end{aligned}
\end{equation}
where \(L_j := \sum_{|\alpha|=2k-j} \partial^{2\alpha}_x\).
\end{problem}
In the preceding problem, $\sigma$ and $\xi$ are used to transform the boundary conditions for $m$ and $u$ into homogeneous boundary conditions (see Section~\ref{PfMT1}).
Furthermore, the boundary conditions \eqref{BCum} at the initial and terminal time were selected to preserve the monotonicity of Problem~\ref{OP} in the sense of Assumption~\ref{Hmono} (also see Remark~\ref{rmk:intparts} below). Generally, monotonicity may not hold with arbitrary boundary conditions. 
Also, the boundary conditions above are natural for our construction of solutions that uses a variational approach (see Section~\ref{VP}).
Because of the high-order terms in Problem~\ref{MP}, we do not expect that the maximum principle holds for the second equation. Hence, there may not be classical solutions with $m\geq 0$. Thus, as in \cite{FGT1}, we introduce a notion of weak solutions to Problem~\ref{MP}. This definition is related to the ones in \cite{cgbt} and \cite{FG2}, where $u$ is only a subsolution to the Hamilton--Jacobi equation. 
To construct weak solutions, we introduce two auxiliary problems: a variational problem and a problem given by a bilinear form, which correspond to the first and the second equations in Problem~\ref{MP}, respectively (see Sections~\ref{VP} and \ref{BF}).

%
%Regularized problem weak solutions
\begin{definition}\label{DMPWS1}
A weak solution to Problem \ref{MP} is a pair $(m,u) \in H^{2k}(\Omega_T) \times H^{2k}(\Omega_T)$ satisfying, for all $w\in \Aa$ and $v \in \Bb_0$,
\begin{flalign*}
 ({\rm E1})\enspace & \,  (m,u)\in\Aa\times\Bb, &\\
 \begin{split}  ({\rm E2})\enspace &  \int_0^T\!\!\!\int_{\Tt^d} \bigg(   u_t + \sum_{i,j=1}^da_{ij}(x)u_{x_ix_j} - H(x,Du) +g(m,h(\boldsymbol{m})) + V(t,x) \bigg)(w-m) \, \dx\dt \\ 
&\quad +\int_0^T\!\!\!\int_{\Tt^d} \bigg[ \epsilon\bigg (m+\sigma+\sum_{|\beta|=2k} \partial^{2\beta}_{t,x} \sigma\bigg)(w-m)
+\epsilon \sum_{|\beta|=2k}\partial_{t,x}^\beta
m\big(\partial_{t,x}^\beta w-\partial_{t,x}^\beta m\big)\bigg]\, \dx\dt \geq 0,\end{split} &\\
\begin{split}  ({\rm E3})\enspace&   \int_0^T\!\!\!\int_{\Tt^d} \bigg(
m_t - \sum_{i,j=1}^d\big(a_{ij}(x)(m+\sigma)\big)_{x_ix_j} - \div\big((m+\sigma) D_p H(x,Du)\big) \bigg)v\,\dx\dt  \\
&\quad+\int_0^T\!\!\!\int_{\Tt^d} \bigg[\epsilon\bigg (u+\xi+\sum_{|\beta|=2k}
\partial^{2\beta}_{t,x}  \xi\bigg)v
+ \epsilon \sum_{ |\beta|= 2k}\partial_{t,x}^\beta u\partial_{t,x}^\beta v
 \bigg]\, \dx\dt=0.
\end{split} 
\end{flalign*}
\end{definition}

\begin{remark}\label{rmk:intparts} Assume that \((m,u)\) is a classical solution to Problem~\ref{MP}, and let \(v\in H^{2k}(\Omega_T)\) with \(v(T,\cdot) = 0\) on \(\Tt^d\). Then, integrating by parts and using  \eqref{BCum} with \(i=2k\), we obtain
\begin{equation*}
\begin{aligned}
&\int_{\Omega_T} \sum_{ |\beta|= 2k}\partial_{t,x}^\beta u\partial_{t,x}^\beta
v\,\dx\dt =\sum_{\ell=0}^{2k} \sum_{ |\alpha|= 2k-\ell}\int_{\Omega_T} \partial_{t}^\ell\partial_{x}^\alpha u\,\partial_{t}^\ell\partial_{x}^\alpha
v\,\dx\dt  = \sum_{\ell=0}^{2k}  (-1)^{2k-\ell} \int_{\Omega_T} \partial_{t}^\ell L_\ell u\,\partial_{t}^\ell v\,\dx\dt \\&\quad= \sum_{\ell =0}^{2k-1}  (-1)^{2k-\ell } \int_{\Omega_T} \partial_{t}^\ell L_\ell u\,\partial_{t}^\ell v\,\dx\dt - \int_{\Omega_T} \partial^{2k+1}_t L_{2k} u\, \partial^{2k-1}_t v \,dx\dt\\
&\quad = \sum_{\ell =0}^{2k-2}  (-1)^{2k-\ell } \int_{\Omega_T} \partial_{t}^\ell L_\ell u\,\partial_{t}^\ell v\,\dx\dt
- \int_{\Omega_T} \big(\partial^{2k-1}_t L_{2k-1} u +\partial^{2k+1}_t L_{2k} u\big)\, \partial^{2k-1}_t v \,dx\dt. 
\end{aligned}
\end{equation*}
Next, we integrate by parts the last integral on the right-hand side of the previous identity, use  \eqref{BCum} with
\(i=2k-1\), and associate the terms with respect to \(\partial^{2k-2}_t v.\) Repeating this process iteratively, and recalling that \(v(T,\cdot)
= 0\) on \(\Tt^d\), we conclude that
\begin{equation*}
\begin{aligned}
\int_{\Omega_T} \sum_{ |\beta|= 2k}\partial_{t,x}^\beta u\partial_{t,x}^\beta
v\,\dx\dt =\int_{\Omega_T} \sum_{ |\beta|= 2k}\partial_{t,x}^{2\beta} u \,v\,\dx\dt.
\end{aligned}
\end{equation*}
Similarly, if \(w\in H^{2k}(\Omega_T)\) satisfies \(w(0,\cdot)
= 0\) on \(\Tt^d\), we conclude that
\begin{equation*}
\begin{aligned}
\int_{\Omega_T} \sum_{ |\beta|= 2k}\partial_{t,x}^\beta m\partial_{t,x}^\beta
w\,\dx\dt =\int_{\Omega_T} \sum_{ |\beta|= 2k}\partial_{t,x}^{2\beta} m \,w\,\dx\dt.
\end{aligned}
\end{equation*}
This observation is at the core of  Definition~\ref{DMPWS1}.
\end{remark}

%
%Remark-Euler Lagrange equation
\begin{remark}\label{rmkmEL}
Let $(m,u)$ be a weak solution to Problem~\ref{MP}. Let $\Omega_T' : = \{(t,x) \in \Omega_T\ |\  m(t,x)>0 \}$, and fix $w_1 \in C_c^\infty(\Omega_T')$. For all $\tau\in \Rr$ with $|\tau|$ small enough, we get $w=m+\tau w_1\in \Aa$.
Then, from (E2), we have
\begin{align*}
& \tau\int_0^T\!\!\!\int_{\Tt^d} \bigg(u_t +\sum_{i,j=1}^da_{ij}(x)u_{x_ix_j} - H(x,Du) +g(m,h(\boldsymbol{m})) + V(t,x) \bigg)w_1 \, \dx\dt
\\&\quad+ \tau\int_0^T\!\!\!\int_{\Tt^d} \bigg[ \epsilon\bigg (m+\sigma+\sum_{|\beta|=2k}
\partial^{2\beta}_{t,x} \sigma\bigg)w_1+\epsilon \sum_{|\beta|=2k}\partial_{t,x}^\beta
m\partial_{t,x}^\beta w_1\bigg]\, \dx\dt \geq 0
.
\end{align*}
Because the sign of $\tau$ is arbitrary, we verify that $m$ satisfies
\begin{align*}
&u_t + \sum_{i,j=1}^da_{ij}(x)u_{x_ix_j}  - H(x,Du) + g(m,h(\boldsymbol{m})) + V(t,x)+\epsilon\bigg (m+\sigma+\sum_{|\beta|=2k}
\partial^{2\beta}_{t,x} (m+\sigma)\bigg)
=0
\end{align*}
pointwise in \(\Omega_T'\). Furthermore, let $w_2\in C_c^\infty(\Omega_T)$ be such that $w_2\geq 0$; then, choosing $w=m+w_2\in \Aa$ in (E2) and integrating by parts, we obtain
\begin{equation*} 
\begin{split}
&\int_0^T\!\!\!\int_{\Tt^d} \bigg(  u_t +\sum_{i,j=1}^da_{ij}(x)u_{x_ix_j} - H(x,Du) +g(m,h(\boldsymbol{m})) 
+ V(t,x)  \bigg)w_2 \, \dx\dt\\
&\quad+\int_0^T\!\!\!\int_{\Tt^d} \epsilon\bigg (m+\sigma+\sum_{|\beta|=2k}
\partial^{2\beta}_{t,x} (m+\sigma)\bigg)
w_2\, \dx\dt\geq 0.
\end{split}
\end{equation*}
Thus,  in the sense of distributions in $\Omega_T$,
\begin{equation*}\label{ELineq1}
u_t  + \sum_{i,j=1}^da_{ij}(x)u_{x_ix_j} - H(x,Du) + g(m,h(\boldsymbol{m})) + 
V(t,x)+\epsilon\bigg (m+\sigma+\sum_{|\beta|=2k}
\partial^{2\beta}_{t,x} (m+\sigma)\bigg)
\geq 0.
\end{equation*}
Also, from (E3), in the sense of distributions in $\Omega_T$, we have
\begin{equation*}\label{Eqdist1}
m_t - \sum_{i,j=1}^d\big(a_{ij}(x)(m+\sigma)\big)_{x_ix_j} - \div\big((m+\sigma) D_p H(x,Du) \big)  
+\epsilon \bigg( u+\xi + \sum_{|\beta|=2k}
\partial^{2\beta}_{t,x}( u+\xi)\bigg )  
= 0.
\end{equation*}
\end{remark}

\begin{teo}\label{MT}
Consider Problem \ref{MP} and suppose that Assumptions \ref{Hconv}--\ref{AWC1} and \ref{Hmono} hold for some $\gamma>1$. Then, there exists a unique weak solution $(m,u) \in H^{2k}(\Omega_T)\times H^{2k}(\Omega_T)$ to Problem \ref{MP} in the sense of Definition~\ref{DMPWS1}.
\end{teo}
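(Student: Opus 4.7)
The plan is to apply Schaefer's fixed-point theorem to a compact operator built by decoupling the two equations of \eqref{HighdRP} through the two auxiliary problems announced in the paper: the variational problem for the $m$-equation (Section~\ref{VP}) and the bilinear-form problem for the $u$-equation (Section~\ref{BF}). Given $(\bar m,\bar u)\in L^2(\Omega_T)\times L^2(\Omega_T)$, define $(m,u)=\Phi(\bar m,\bar u)$ in two successive steps. First, set $m\in\Aa$ to be the unique solution of the variational inequality obtained from (E2) when $\bar u$ is frozen inside the Hamilton--Jacobi terms and $\bar m$ is frozen inside the coupling $g(\cdot,h(\cdot))$; well-posedness is furnished by the strictly convex, coercive nature of the associated $\epsilon$-regularized energy on the closed convex subset $\Aa$ of the Hilbert space $H^{2k}(\Omega_T)$, after an affine shift that absorbs the inhomogeneous initial condition $m_0$. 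Second, set $u\in\Bb$ to be the unique solution of the linear problem obtained from (E3) when $m+\sigma$ and the drift $D_pH(x,D\bar u)$ are treated as given coefficients; well-posedness is furnished by Lax--Milgram applied to the bounded, coercive bilinear form whose leading part comes from the $\epsilon$-regularization, again after a shift that absorbs the inhomogeneous terminal condition $u_T$.

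Continuous dependence on the data in both auxiliary problems shows that $\Phi:L^2(\Omega_T)^2\to L^2(\Omega_T)^2$ is continuous. Since the image of $\Phi$ lies in $H^{2k}(\Omega_T)^2$ and $2k>\frac{d+1}{2}+3$, the Rellich--Kondrachov embedding $H^{2k}(\Omega_T)\hookrightarrow L^2(\Omega_T)$ is compact, so $\Phi$ is compact. To invoke Schaefer's theorem, the remaining task is the a priori bound: for every $\lambda\in[0,1]$ and every $(m,u)=\lambda\Phi(m,u)$ one must show $\|m\|_{H^{2k}}+\|u\|_{H^{2k}}\leq C$ with $C$ independent of $\lambda$. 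The standard MFG device is to test the $m$-inequality against a suitable shift of $m$, to test the $u$-equation against a suitable shift of $u$, and then to subtract: Assumption~\ref{Hmono} makes the $g$-contribution nonnegative; the convexity of $H$ (Assumption~\ref{Hconv}) combines $-H(x,Du)$ and $m\,D_pH(x,Du)\cdot Du$ through the Legendre inequality $H(x,p)+L(x,v)+p\cdot v\geq 0$; and the $\epsilon$-regularization delivers coercivity of the full $H^{2k}$-seminorm by virtue of Remark~\ref{rmk:intparts} together with the boundary conditions \eqref{BCum}. Schaefer's theorem then produces a fixed point of $\Phi$, which, unwinding the definitions of the two auxiliary problems, is a weak solution to Problem~\ref{MP}. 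For uniqueness, given two solutions $(m_i,u_i)$ with $i\in\{1,2\}$, I would test (E2) for each $i$ against $w=m_j\in\Aa$ with $j\neq i$ and (E3) for each $i$ against $v=u_1-u_2\in\Bb_0$, then subtract and sum in $i$; the same monotonicity-plus-convexity bookkeeping leaves only a strictly coercive $\epsilon$-contribution in the $H^{2k}$-seminorm of the difference $(m_1-m_2,u_1-u_2)$, forcing it to vanish.

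The hardest step is the a priori estimate. The core difficulty is that testing the two equations separately loses sign on the cross-terms $H(x,Du)\,m$ and $m\,D_pH(x,Du)\cdot Du$, so the combined MFG energy identity is indispensable to reassemble them into a nonnegative Legendre pairing; this is the precise reason the boundary conditions \eqref{BCum} were engineered as they are, and why Remark~\ref{rmk:intparts} plays the central role of converting mixed high-order derivative inner products into self-adjoint biharmonic pairings. Once those boundary contributions vanish, the remaining estimates are in the spirit of \cite{FG2} and \cite{FGT1} adapted to the parabolic, higher-order setting, with the growth hypotheses of Section~\ref{Ass} closing the bound; beyond careful bookkeeping of high-order derivatives I expect no conceptual obstacle.
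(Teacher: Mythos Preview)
Your overall architecture matches the paper's: decouple \eqref{HighdRP} into a variational problem for $m$ (Section~\ref{VP}) and a Lax--Milgram problem for $u$ (Section~\ref{BF}), build a fixed-point map, apply Schaefer's theorem, and prove uniqueness by monotonicity. The reduction to $u_T=0$ by an affine shift is also what the paper does.

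There is, however, a genuine gap in your choice of function space. You propose $\Phi:L^2(\Omega_T)^2\to L^2(\Omega_T)^2$, but the auxiliary problems cannot even be posed for $(\bar m,\bar u)\in L^2\times L^2$: the coefficient of the variational inequality (E2) is $\bar u_t+\sum a_{ij}\bar u_{x_ix_j}-H(x,D\bar u)+g(\bar m,h(\boldsymbol{\bar m}))+V$, which requires $\bar u$ to have classical derivatives up to second order and $\bar m$ to be regular enough for $h$ and $g$ to act on it (cf.~Assumption~\ref{hhyp} and the definition of $C_0$ in Corollary~\ref{aprimvp1}). The paper resolves this by setting the map $A$ on the closed convex subset $\widetilde\Aa_0\times\widetilde\Bb_0\subset H^{2k-2}(\Omega_T)\times H^{2k-1}(\Omega_T)$; since $2k>\frac{d+1}{2}+3$, Morrey's theorem (see \eqref{eq:MorreyET}) then gives $\bar m\in C^{1,l}$ and $\bar u\in C^{2,l}$, so all the frozen coefficients are bounded and continuous. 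Compactness then comes from $H^{2k}\hookrightarrow H^{2k-2}\times H^{2k-1}$ via Rellich--Kondrachov, and the continuity proof (Proposition~\ref{ACC}) exploits precisely these embeddings together with \eqref{hFD}--\eqref{hbounds}. Your $L^2$ setting does not support any of this.

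A smaller imprecision: in your a priori bound you invoke Assumption~\ref{Hmono} to make the $g$-contribution nonnegative and speak of a ``Legendre pairing''. In fact the paper's a priori estimate (Propositions~\ref{apriori1}--\ref{apriWS1} and their $\lambda$-scaled analogue \eqref{E3.1}) does not use Assumption~\ref{Hmono} at all; it tests (E2) with $w=m_0$ and (E3) with $v=u_T-u$, \emph{adds}, and then uses the coercivity Assumptions~\ref{Hcoer}--\ref{Bderi} on $H$ and the growth Assumptions~\ref{gint}--\ref{AWC1} on $g$ to absorb all cross-terms. Assumption~\ref{Hmono} is reserved for uniqueness (and, later, for the Minty argument in Section~\ref{PfMT2}). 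Your uniqueness paragraph, by contrast, is essentially correct and matches the paper's computation \eqref{ineqUni-1}--\eqref{ineqUni-2}.
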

The above notions of weak solutions are more relaxed 
than typical weak solutions or classical solutions. However, sometimes, it is possible to show that these weak solutions have further regularity properties.
This matter is examined in Section~\ref{prowsOP}, where we characterized further properties of weak solutions to Problems~\ref{OP} and \ref{MP}. 
Furthermore, in Section~\ref{Secfr}, we show how to extend the monotonicity method to  congestion and density constrained MFGs.

%2 Assumptions
\section{Assumptions}\label{Ass}
Our main results need the following hypotheses on the data in Problems~\ref{OP} and \ref{MP}. These hypotheses are similar to the ones in \cite{FGT1}.
The first four assumptions provide standard convexity and growth conditions on $H$. For example, Assumptions~\ref{Hconv}--\ref{Bderi} hold for
\begin{equation*}
H(x,p)=c(x)|p|^{\gamma}+b(x)\cdot p,
\end{equation*}
where $c\in C^\infty(\Tt^d)$ is positive, $b\in C^\infty(\Tt^d)$, and $\gamma>1$ as in Problem~\ref{OP}.
%
%Assumption:convexity
\begin{hyp}\label{Hconv}
For all $x\in \Tt^d$, the map $p\mapsto H(x,p)$ is convex in $\Rr^d$.
\end{hyp}
\begin{remark}\label{Hconv2}
Because of Assumption~\ref{Hconv}, for all $x\in \Tt^d$ and $(p, q) \in \Rr^d\times \Rr^d$, $H$ satisfies
\begin{equation*}
(D_p H(x, p)-D_p H(x,q))\cdot (p-q)\geq 0.
\end{equation*}
\end{remark}

%
%Assumption:coercivity
\begin{hyp}\label{Hcoer}
There exists a constant, $C>0$, and $\gamma>1$ such that, for all $(x,p)\in \Tt^d\times\Rr^d$,
we have%
\begin{equation*}
\begin{aligned}
- H(x,p)+ D_p H(x,p)\cdot p\geq \frac{1}{C}|p|^\gamma -C.
\end{aligned}
\end{equation*}

\end{hyp}

%Assumption:integral 
\begin{hyp}\label{Hbdd}
Let $\gamma>1$ be as in Assumption~\ref{Hcoer}.
There exists a constant, $C>0$, such that, for all $(x,p)\in \Tt^d\times\Rr^d$,
we have%
\begin{equation*}
\begin{aligned}
H(x,p)
\geq 
\frac{1}{C} |p|^\gamma -C.
\end{aligned}
\end{equation*}
\end{hyp}
\begin{hyp}\label{Bderi}
Let $\gamma>1$ be as in Assumption~\ref{Hcoer}.
There exists a constant, $C>0$, such that, for all $(x,p)\in \Tt^d\times\Rr^d$,
we have\begin{equation*}
|D_pH(x,p)|\leq C|p|^{\gamma-1}+C .
\end{equation*}
\end{hyp}

The following assumption is a regularity condition for $h$, see \cite{FG2}. For instance, Assumption~\ref{hhyp} holds for
%\marginpar{\tiny
%{\color{red}{Have to be\\ careful about\\ convolution\\ in \(t\) ...}} }
\begin{equation}\label{exah}
h(\boldsymbol{m})(t,x) 
= 
%c_1 (\zeta \ast \boldsymbol{m})(t,x) 
%+
%c_2 \big(\zeta\ast (\zeta \ast \boldsymbol{m})^{\tau}\big)(t,x),
c(\zeta\ast (\zeta \ast \boldsymbol{m})^{\tau}(\cdot, x))(t)
=c\int_{\Tt^d} \zeta(x-z)\bigg(\int_{\Tt^d} \zeta(z-y) m(y,t)\,
\dy\bigg)^\tau\d z,
\end{equation}
where $c \geq0$ and $\tau>0$
and where
 $\zeta\in C_c^\infty(\Tt^d)$ is such that $\zeta\geq 0$,  
$\int_{\Tt^d} \zeta\,\dx=1$ and $\zeta$ us symmetric.
%Assumption:h
\begin{hyp}\label{hhyp}
For  each \(\kappa\in\Nn\) such that $\kappa>\frac{d+1}{2}+1$, we have
\begin{align*}
&({\rm a})\ \big\{h(\boldsymbol{m})| m\in H^{\kappa}(\Omega_T; \Rr_0^+)\big\}\subset H^{\kappa}(\Omega_T; \Rr_0^+),\\
&({\rm b})\ m\in H^{\kappa}(\Omega_T; \Rr_0^+)\mapsto h(\boldsymbol{m})\in H^{\kappa}(\Omega_T; \Rr_0^+)\ \mbox{defines a Fr\'{e}chet differentiable map}.
\end{align*}
\end{hyp}

As observed in \cite{FG2}, if \(h\) satisfies  Assumption~\ref{hhyp},
then  for all $\bar m \in H^{\kappa}(\Omega_T; \Rr_0^+)$,
there exists a bounded linear operator,
\(\mathfrak{H}_{\bar m} \in \mathcal{L}
(H^{\kappa}(\Omega_T) ; H^{\kappa}(\Omega_T))
\), such that, for all $m \in  H^{\kappa}(\Omega_T; \Rr_0^+)$,
{\setlength\arraycolsep{0.5pt}
\begin{eqnarray}\label{hFD}
\begin{aligned}
&\Vert h(\boldsymbol{m}) - h(\boldsymbol{\bar
m})\Vert_{H^{\kappa}(\Omega_T)}
\leq \Vert \mathfrak{H}_{\bar
m}\Vert_{\mathcal{L}(H^{\kappa}(\Omega_T) ; H^{\kappa}(\Omega_T))} \Vert m - \bar
m
\Vert_{H^{\kappa}(\Omega_T)} + o\big(\Vert
m - \bar m
\Vert_{H^{\kappa}(\Omega_T)} \big).
\end{aligned}
\end{eqnarray}}%
Therefore, taking $\bar m=0$ in \eqref{hFD},
we get
{\setlength\arraycolsep{0.5pt}
\begin{eqnarray}\label{hbounds}
\begin{aligned}
\Vert h(\boldsymbol{m})\Vert_{H^{\kappa}(\Omega_T)}
\leq C\big(1+ \Vert m\Vert_{H^{\kappa}(\Omega_T)}\big)
\end{aligned}
\end{eqnarray}}%
for some positive constant $C= C
\big (\kappa,\Omega_T, \Vert \mathfrak{H}_{0}\Vert_{\mathcal{L}
(H^{\kappa}(\Omega_T) ; H^{\kappa}(\Omega_T))} ,\Vert h(\boldsymbol{0})
\Vert_{ H^{\kappa}(\Omega_T)} \big)$.

The next three assumptions concern the growth of $g$. For instance, as we discuss in Remark~\ref{esth1} below, Assumptions~\ref{gint}--\ref{gwc} hold for $g(m,\theta)=m^{\tau}+\theta$, $0<\tau\leq 1$, and \(h\) as in \eqref{exah}, which is a standard example in MFGs.
%{\color{red}{Check (A\ref{gint})...}}
%Assumption:g
\begin{hyp}\label{gint}
The map $m\mapsto g(m,h(\boldsymbol{m}))$ is monotone with respect to the $ L^2$-inner product; that is, for $m_1, m_2\in L^2(\Omega_T)$, we have
\begin{equation*}
\int_{0^T}\int_{\Tt^d}
(g(m_1,h(\boldsymbol{m_1}))-g(m_2,h(\boldsymbol{m_2})))(m_1-m_2)
\,\dx\dt \geq 0.
\end{equation*}
%\(\Mm_{ac}(\Omega_T)\). 
Moreover, for all $\delta>0$, there exists a positive constant, $C_\delta$, such that, for all $m \in L^1(\Omega_T)$  with \(m\geq 0\), 
we have\begin{equation*}
\max\bigg\{\int_0^T\!\!\!\int_{\Tt^d} |g(m, h(\boldsymbol{m}))|\, \dx\dt,
\int_0^T\!\!\!\int_{\Tt^d} m \,\dx\dt \bigg\}
\leq
\delta \int_0^T\!\!\!\int_{\Tt^d} mg(m,h(\boldsymbol{m}))\, \dx\dt + C_\delta.
\end{equation*}
\end{hyp}

%
%

%
%
%Assumption:g
\begin{hyp}\label{AWC1}
There exists a constant, $C>0$, such that, for all $m\in L^1(\Omega_T)$
with \(m\geq 0\),
we have
\begin{equation*}
\int_0^T\!\!\!\int_{\Tt^d} mg(m,h(\boldsymbol{m}))\, \dx\dt\geq -C.
\end{equation*}
\end{hyp}
%
%

%Assumption:weak convergence
\begin{hyp}\label{gwc}
If $\{m_j\}_{j=1}^\infty\subseteq L^1(\Omega_T)$ is a sequence
of nonnegative functions satisfying 
\begin{equation*}
\sup_{j\in\Nn} \int_0^T\!\!\!\int_{\Tt^d} m_jg(m_j,h(\boldsymbol{m}_j))\, \dx\dt <+\infty,
\end{equation*}
then there exists a subsequence of $\{m_{j}\}_{j=1}^\infty$ that converges weakly in $L^1(\Omega_T)$.
\end{hyp}

\begin{remark}[On Assumptions~\ref{gint}--\ref{gwc}]\label{esth1}
As we mentioned above, Assumptions~\ref{gint}--\ref{gwc} hold
for $g(m,\theta)=m^{\tau}+\theta$, $\tau>0$, and \(h\) as in \eqref{exah}. To see this, we first note that \(mg(m, h(\boldsymbol{m})) = m^{\tau + 1} +mh(\boldsymbol{m})\). Then,  because \(h(\boldsymbol{m})\geq 0\) for all \(\boldsymbol{m} \in\Mm_{ac}(\Omega_T) \), the only nontrivial condition is the one in Assumption~\ref{gint}. 

To verify that Assumption~\ref{gint} holds, we fix $\delta>0$ and assume that $c=1$, without loss of generality.
We start by observing that there exists a positive constant,
\(C_\delta\),
only depending on \(\delta\) and \(\tau\), such that \(|s|^{\tau} \leq \delta
|s|^{\tau+1} + C_\delta\)  for all \(s\in\Rr\). By symmetry of $\zeta$, for any $f, g\in L^1(\Tt^d)$, we have $\int_{\Tt^d} f(x) (\zeta \ast g)(x) \,\dx=\int_{\Tt^d} (\zeta \ast f)(x) g(x) \,\dx$.
Hence, using the identity $\|\zeta\|_{L^1(\Tt^d)}=1$,
we get
\begin{equation*}
\begin{aligned}
\int_0^T\!\!\!\int_{\Tt^d} h(\boldsymbol{m})\, \dx\dt
&=
\int_0^T\!\!\!\int_{\Tt^d} \big(\zeta\ast (\zeta \ast \boldsymbol{m})^\tau(\cdot,
x)\big)(t)\, \dx\dt=
\int_0^T\!\!\!\int_{\Tt^d}(\zeta*1)(x)\big( (\zeta \ast \boldsymbol{m})^\tau(\cdot,
x)\big)(t)\, \dx\dt\\
&=
\int_0^T\!\!\!\int_{\Tt^d}\big( (\zeta \ast \boldsymbol{m})^\tau(\cdot,
x)\big)(t)\, \dx\dt\leq\delta\int_0^T\!\!\!\int_{\Tt^d}\big( (\zeta \ast \boldsymbol{m})^{\tau+1}(\cdot,
x)\big)(t)\, \dx\dt + C_\delta\\
&  
=
\delta\int_0^T\!\!\!\int_{\Tt^d}\big((\zeta \ast \boldsymbol{m})(\cdot, x) (\zeta
\ast \boldsymbol{m})^{\tau}(\cdot, x)\big)(t)\, \dx\dt + C_\delta\\
&=
\delta\int_0^T\!\!\!\int_{\Tt^d} m(x) \big(\zeta\ast(\zeta \ast \boldsymbol{m})^{\tau}(\cdot,
x)\big)(t)\, \dx\dt + C_\delta
=
\delta\int_0^T\!\!\!\int_{\Tt^d} m h(\boldsymbol{m})\, \dx\dt + C_\delta,
\end{aligned}
\end{equation*}
from which we deduce that Assumption~\ref{gint} holds. \end{remark}

\begin{remark}
In Remark~\ref{esth1}, we consider an explicit example where the nonnegativity
and symmetry conditions on $\zeta$ are crucial. Under Assumption $ \boldsymbol{{\rm
(g1)}}$ in \cite{FG2}, more general cases can be handled.
\end{remark}

Finally, the next assumption imposes the monotonicity of the functional in Definition~\ref{DOPWS1}. 
Monotonicity is crucial in the proof of Theorem~\ref{TOP} and Theorem~\ref{MT} through Minty's method.
\begin{hyp}\label{Hmono}
The functional $F$ introduced in Definition~\ref{DOPWS1} is monotone with respect to the $L^2\times L^2$-inner product; that is,
for all $(\eta_1, v_1)$, $(\eta_2, v_2) \in \Aa   \times \Bb$, $F$ satisfies
\begin{equation*}
\left\langle
   F
  \begin{bmatrix}
      \eta_1  \\
      v_1 
  \end{bmatrix}
  -
   F
  \begin{bmatrix}
      \eta_2  \\
      v_2 
  \end{bmatrix},
  \begin{bmatrix}
      \eta_1  \\
      v_1
  \end{bmatrix}
  -
  \begin{bmatrix}
      \eta_2  \\
      v_2
  \end{bmatrix}
\right\rangle
\geq 0.
\end{equation*}
\end{hyp}

%
%
%3 A priori Estimates
\section{Properties of weak solutions}\label{Prows}
Here, we examine the properties of weak solutions, $(m,u)$, to Problem~\ref{MP}. As in \cite{FGT1}, we prove a priori estimates for classical solutions and weak solutions. Moreover, we establish that $u$ belongs to $L^{\gamma}((0,T); W^{1,\gamma}(\Tt^d))$ and that $(\sqrt{\epsilon}m,\sqrt{\epsilon} u)$ is bounded in $H^{2k}(\Omega_T)\times H^{2k}(\Omega_T)$, independently of $\epsilon$.

To simplify the notation, throughout this section, we write the same letter $C$ to denote any positive constant  depending only  on the data; that is, depending only on \(\Omega_T\), \(d\), \(\gamma\), $H$, $V$, $\sigma$, $\xi$,  $m_0$,  $u_T$, on the constants in the Assumptions~\ref{Hcoer}--\ref{AWC1}, and
on constants such as the constants in Morrey's theorem or in the Gagliardo--Nirenberg interpolation inequality.
In particular, these constants are independent of the choice of solutions to Problem~\ref{MP} and of $\epsilon$.
\begin{pro}\label{apriori1}
Consider Problem~\ref{MP} and suppose that Assumptions~\ref{Hcoer}--\ref{gint} hold for some \(\gamma>1\). Then, there exists a positive constant, $C$, depending only on the problem data, such that any classical solution $(m, u)$ to Problem~\ref{MP} satisfies
%\marginpar{\tiny \textcolor[rgb]{0.984314,0.00784314,0.027451}{why do we
%need Assumption~\ref{hhyp} here?}}
\begin{equation}\label{aprimu}
\begin{aligned}
&\int_0^T\!\!\!\int_{\Tt^d} \Big(
mg(m,h(\boldsymbol{m}))+\frac{1}{C} (m+\sigma)|Du|^\gamma +\frac{1}{C} m_{0} |Du|^\gamma \Big)\, \dx\dt\\
&\quad+
{\epsilon} \int_0^T\!\!\!\int_{\Tt^d} \
\bigg(
m^2 + u^2 + \sum_{|\beta|=2k}(\partial_{t,x}^\beta m)^2
+ \sum_{|\beta|= 2k}(\partial_{t,x}^\beta u)^2
 \bigg) \, \dx\dt
\leq C\big (1+\|Du\|_{L^1(\Omega_T)}\big). 
\end{aligned}
\end{equation}
\end{pro}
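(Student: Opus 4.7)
The plan is to combine two independent testings of the regularized system, exploiting the adjoint-MFG structure. The principal step multiplies the HJB equation in \eqref{HighdRP} by $m+\sigma$, multiplies the FP equation by $u$, integrates over $\Omega_T$, and adds the two identities. Two integrations by parts in $x\in\Tt^d$ make the diffusion contributions
\begin{equation*}
\int_{\Omega_T}(m+\sigma)\sum_{i,j}a_{ij}u_{x_ix_j}\,\dx\dt -\int_{\Omega_T} u\sum_{i,j}(a_{ij}(m+\sigma))_{x_ix_j}\,\dx\dt
\end{equation*}
cancel exactly, while one integration by parts on the divergence term collapses the Hamilton--transport pair to $\int(m+\sigma)(D_pH(x,Du)\cdot Du-H(x,Du))\,\dx\dt$. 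Applying Assumption~\ref{Hcoer} pointwise against the nonnegative weight $m+\sigma$ yields the lower bound $\tfrac{1}{C}\int(m+\sigma)|Du|^\gamma - C\int(m+\sigma)$. The coupling term $\int(m+\sigma)g(m,h(\boldsymbol{m}))$ splits as $\int mg+\int\sigma g$, and Assumption~\ref{gint} (with a small parameter $\delta>0$) shows $\int|\sigma g|\leq C\delta\int mg + C_\delta$, which isolates the desired $\int mg$ contribution.

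The time-derivative pair integrates to $\int_{\Tt^d}[(m+\sigma)u]_0^T -\int_{\Omega_T}\sigma_t u\,\dx\dt$. Its purely data-dependent pieces (involving $m_0$, $u_T$, $\sigma(0,\cdot)$, $\sigma(T,\cdot)$) are bounded by the smoothness assumptions, and the linear $u$-pieces are absorbed by a small multiple of the $\epsilon\|u\|_{L^2(\Omega_T)}^2$ obtained below. For the zero-order $\epsilon$-terms, expanding $\epsilon\int(m+\sigma)^2$ and $\epsilon\int u(u+\xi)$ against smooth $\sigma,\xi$ and using Young's inequality yields $\tfrac{\epsilon}{2}\|m\|_{L^2(\Omega_T)}^2 + \tfrac{\epsilon}{2}\|u\|_{L^2(\Omega_T)}^2$ up to $\epsilon$-bounded constants. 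For the high-order $\epsilon$-terms, the identity from Remark~\ref{rmk:intparts}, combined with the boundary conditions \eqref{BCum}, converts
\begin{equation*}
\int_{\Omega_T}(m+\sigma)\sum_{|\beta|=2k}\partial^{2\beta}_{t,x}(m+\sigma)\,\dx\dt \quad\text{into}\quad \int_{\Omega_T}\sum_{|\beta|=2k}\big(\partial^\beta_{t,x}(m+\sigma)\big)^2\,\dx\dt ,
\end{equation*}
and absorbing the cross products $\partial^\beta m\,\partial^\beta\sigma$ by Cauchy--Schwarz produces $\tfrac{\epsilon}{2}\int\sum_{|\beta|=2k}(\partial^\beta_{t,x} m)^2$ up to a data-dependent constant; the $u$-side is handled analogously via $\xi$ and the symmetric part of \eqref{BCum}.

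To extract the additional $\tfrac{1}{C}\int m_0|Du|^\gamma$ contribution, which is genuinely sharper than $(m+\sigma)|Du|^\gamma$ wherever $m+\sigma$ degenerates, I would perform a second, independent testing: multiply the HJB equation by the strictly positive weight $m_0(x)$ and integrate over $\Omega_T$. The time piece gives $\int_{\Tt^d}m_0(u_T-u(0,\cdot))\,\dx$; the $a_{ij}$ piece is transferred onto $m_0$ by two integrations by parts in $x$; and Assumption~\ref{Hbdd} provides $\int m_0 H(x,Du)\geq\tfrac{1}{C}\int m_0|Du|^\gamma - C$. Rearranging, the remaining right-hand terms are either data-bounded, or of the form $\int m_0 g$ (controlled by Assumption~\ref{gint}), or involve the trace of $u$ at $t=0$ and a linear $u$-integral; these unsigned $u$-pieces are absorbed into the $\epsilon\|u\|_{L^2(\Omega_T)}^2$ secured in the first step via the continuous trace inequality $H^{2k}(\Omega_T)\hookrightarrow C([0,T];L^2(\Tt^d))$, valid since $2k>\tfrac{d+1}{2}+3$, while lower-order $|Du|$-terms produced by Assumption~\ref{Bderi} (notably from the $\sigma$-part of the Hamilton--transport computation) are routed directly into $C\|Du\|_{L^1(\Omega_T)}$ on the right. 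The main obstacle is exactly this control of the unsigned traces of $u$ at $t=0$ without a maximum principle; the scaling between the two testings must be chosen so that the $\epsilon$-terms from the first step dominate the trace penalties from the second, after which \eqref{aprimu} follows by summing.
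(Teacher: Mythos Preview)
Your strategy has a genuine gap: the constant \(C\) in \eqref{aprimu} must be independent of \(\epsilon\) (this is the whole point of the estimate, as used later in Corollaries~\ref{apriDu1} and \ref{aprisqrt}), but your handling of the boundary traces forces an \(\epsilon^{-1}\) onto the right-hand side. When you test with \((m+\sigma,u)\), the time-derivative pair produces \(\int_{\Tt^d}(m(T)+\sigma(T))u_T\,\dx\) and \(\int_{\Tt^d}(m_0+\sigma(0,\cdot))u(0,\cdot)\,\dx\), which involve the \emph{unknown} traces \(m(T,\cdot)\) and \(u(0,\cdot)\). Absorbing \(\|u(0,\cdot)\|_{L^2(\Tt^d)}\) by the trace inequality \(H^{2k}(\Omega_T)\hookrightarrow C([0,T];L^2(\Tt^d))\) and Young's inequality gives \(\tfrac{\epsilon}{4}\|u\|_{H^{2k}}^2 + C\epsilon^{-1}\), and likewise for \(m(T,\cdot)\); the same issue recurs in your second testing with \(m_0\). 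There is no free scaling parameter to tune this away, since the \(\epsilon\)-coercive terms and the offending traces arise in the same step. A second problem is that Remark~\ref{rmk:intparts} converts \(\int\partial^{2\beta}_{t,x}m\cdot w\) into \(\int\partial^\beta_{t,x}m\,\partial^\beta_{t,x}w\) only when \(w(0,\cdot)=0\) (and analogously \(v(T,\cdot)=0\) on the \(u\)-side); your test functions \(m+\sigma\) and \(u\) do not satisfy these vanishing conditions, so the claimed sign on the high-order \(\epsilon\)-terms is not justified.

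The paper avoids both obstacles by testing the HJB equation against \(m-m_0\) and the FP equation against \(u-u_T\) in a \emph{single} step. Because \((m-m_0)(0,\cdot)=0\) and \((u-u_T)(T,\cdot)=0\), the time-derivative boundary terms cancel exactly (no unknown traces appear), and Remark~\ref{rmk:intparts} applies directly to yield \(\epsilon\sum_{|\beta|=2k}\|\partial^\beta_{t,x}m\|_{L^2}^2\) and \(\epsilon\sum_{|\beta|=2k}\|\partial^\beta_{t,x}u\|_{L^2}^2\) with the correct sign. Moreover, this single testing already produces \emph{both} Hamiltonian contributions: the combination \(-H(x,Du)(m-m_0)+(m+\sigma)D_pH\cdot Du\) rearranges to \((m+\sigma)(D_pH\cdot Du-H)+m_0 H+\sigma H\), and then Assumption~\ref{Hcoer} bounds the first term below by \(\tfrac{1}{C}(m+\sigma)|Du|^\gamma\) while Assumption~\ref{Hbdd} bounds \(m_0H\) below by \(\tfrac{1}{C}m_0|Du|^\gamma\). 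Your separate \(m_0\)-testing is therefore unnecessary.
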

%
%
%proof
\begin{proof}
Multiplying the first equation in \eqref{HighdRP} by $(m-m_{0})$ and the second one by $(u-u_T)$, 
adding and integrating over $\Omega_T$, and then integrating by parts and taking the boundary conditions into account, we obtain
\begin{equation}\label{E3.2}
 \begin{aligned}
  &\int_0^T\!\!\!\int_{\Tt^d} \bigg[
  m g(m,h(\boldsymbol{m}))+(m+\sigma)  \big( - H(x,Du) + D_p H(x,Du) \cdot Du \big) \\
  &\quad+ m_{0} H(x,Du) 
  + 
  \epsilon \bigg( m^2 + u^2 + \sum_{|\beta|=2k}(\partial_{t,x}^\beta m)^2 
  + \sum_{|\beta|= 2k}(\partial_{t,x}^\beta u)^2\bigg)
  \bigg] \, \dx\dt\\
   = &\int_0^T\!\!\!\int_{\Tt^d}\bigg[
  -\sum_{i,j=1}^du_{x_i} \big(a_{ij}(m_0+\sigma)\big)_{x_j}+ m_{0} g(m,h(\boldsymbol{m}))
  +\bigg(-\sum_{i,j=1}^d\big(a_{ij}{u_T}_{x_ix_j}\big) - V \bigg) m\\
 &\quad + \bigg( V m_{0} -\sum_{i,j=1}^d(a_{ij}\sigma)_{x_ix_j}u_T+\epsilon \sigma m_0  + \epsilon \xi u_T \bigg) \\
  &\quad + ( m+\sigma ) D_p H(x,Du) \cdot Du_T 
   - \sigma H(x,Du) + \epsilon \big( u(u_T-\xi) +m(m_0-\sigma)\big) \\
  &\quad + \epsilon\bigg( \sum_{|\beta|=2k} 
   \partial_{t,x}^\beta m \partial_{t,x}^\beta m_0 
   -\sum_{|\beta|= 2k} \partial_{t,x}^{2\beta}\sigma (m-m_0) +
  \sum_{|\beta|= 2k} \partial_{t,x}^\beta u \partial_{t,x}^\beta u_T -\sum_{|\beta|= 2k} \partial_{t,x}^{2\beta}\xi (u- u_T )\bigg)\bigg]\, \dx\dt.
  \end{aligned}
\end{equation}
From Assumptions~\ref{Hcoer}--\ref{Bderi}, Young's inequality, and the positivity of $m$, $\sigma$, and $m_0$, we get
\begin{equation*}
\begin{aligned}
&\int_0^T\!\!\!\int_{\Tt^d} ( m+\sigma ) \big( - H(x,Du) + D_p H(x,Du) \cdot Du\big)\, \dx\dt \geq 
\int_0^T\!\!\!\int_{\Tt^d}\Big( \frac{(m+\sigma)|Du|^\gamma}{C} -C(m+\sigma)\Big)\, \dx\dt,
\end{aligned}
\end{equation*}
\begin{equation*}
\begin{aligned}
&\int_0^T\!\!\!\int_{\Tt^d} m_{0} H(x,Du) \,\dx\dt 
\geq 
\int_0^T\!\!\!\int_{\Tt^d} \bigg( \frac{m_{0} |Du|^\gamma}{C} - C m_{0} \bigg)\,\dx\dt,
\end{aligned}
\end{equation*}
\begin{equation*}
\begin{aligned}
\int_0^T\!\!\!\int_{\Tt^d} ( m+\sigma ) D_p H(x,Du) \cdot Du_T \,\dx\dt
&\leq
\int_0^T\!\!\!\int_{\Tt^d} C ( m+\sigma )( |Du|^{\gamma-1} + 1 )\,\dx\dt\\
&
\leq\int_0^T\!\!\!\int_{\Tt^d}\bigg( \frac{( m+\sigma ) |Du|^\gamma}{2C} + C m \bigg) \,\dx\dt + C,
\end{aligned}
\end{equation*}
and
\begin{equation*}
\begin{aligned}
&-\int_0^T\!\!\!\int_{\Tt^d} \sigma H(x,Du)\, \dx\dt 
\leq \int_0^T\!\!\!\int_{\Tt^d}(- \frac{\sigma|Du|^\gamma}{C} + C \sigma )\, \dx\dt \leq C.
\end{aligned}
\end{equation*}
Using these estimates in \eqref{E3.2}, together with Young's inequality and Assumption~\ref{gint},  we obtain
\begin{equation*}
 \begin{split}
   &\int_0^T\!\!\!\int_{\Tt^d} \bigg[ mg(m,h(\boldsymbol{m})) + \frac{(m+\sigma)|Du|^\gamma}{C} +  \frac{m_{0} |Du|^\gamma}{C} \\
   &\quad
   + \frac{\epsilon}{2} \bigg( m^2 + u^2 + \sum_{|\beta|=2k}
   (\partial_{t,x}^\beta m)^2 
   + \sum_{|\beta|= 2k} (\partial_{t,x}^\beta u)^2 \bigg)\bigg] \, \dx\dt\\
   \leq
   &\int_0^T\!\!\!\int_{\Tt^d}\Big( 
   m_{0} g(m,h(\boldsymbol{m}))+ C m +  \frac{(m+\sigma)|Du|^\gamma}{2C} \Big)\, \dx\dt 
   + C\big(1+ \|Du\|_{L^1(\Omega_T)}\big)\\
   \leq 
  &\int_0^T\!\!\!\int_{\Tt^d}\Big( \frac{mg(m,h(\boldsymbol{m}))}{2} + \frac{(m+\sigma)|Du|^\gamma}{2C} \Big) \, \dx\dt + C\big(1+ \|Du\|_{L^1(\Omega_T)}\big),
 \end{split}
\end{equation*}
from which the conclusion follows.
\end{proof}

The preceding result can be extended to weak solutions of Problem~\ref{MP} in the sense of Definition~\ref{DMPWS1}.
%a priori estimate of weak solution (m,u) in main problem
\begin{pro}\label{apriWS1}
Consider Problem \ref{MP} and suppose that Assumptions~\ref{Hcoer}--\ref{gint} hold for some \(\gamma>1\). Then, any weak solution $(m, u)$ to Problem~\ref{MP} in the sense of Definition~\ref{DMPWS1} satisfies \eqref{aprimu}. 
\end{pro}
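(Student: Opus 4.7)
The plan is to reproduce the classical-solution computation from Proposition~\ref{apriori1} by picking carefully chosen test functions in the weak formulation (E2)--(E3). The key observation is that \(m_0\), regarded as a function on \(\Omega_T\) constant in time, belongs to \(\Aa\), since \(m_0\in C^{4k}(\Tt^d)\subset H^{2k}(\Omega_T)\), \(m_0(0,\cdot)=m_0\), and \(m_0>0\). Similarly, \(u-u_T\in H^{2k}(\Omega_T)\) and vanishes on \(\{T\}\times\Tt^d\), so \(u-u_T\in\Bb_0\). I will therefore test (E2) with \(w=m_0\) and (E3) with \(v=u-u_T\).

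Testing (E2) with \(w=m_0\) yields
\begin{equation*}
\begin{split}
&\int_0^T\!\!\!\int_{\Tt^d}\Big(u_t+\textstyle\sum a_{ij}u_{x_ix_j}-H(x,Du)+g(m,h(\boldsymbol{m}))+V\Big)(m_0-m)\,\dx\dt\\
&\quad+\epsilon\int_0^T\!\!\!\int_{\Tt^d}\Big(m+\sigma+\textstyle\sum_{|\beta|=2k}\partial^{2\beta}_{t,x}\sigma\Big)(m_0-m)\,\dx\dt+\epsilon\int_0^T\!\!\!\int_{\Tt^d}\textstyle\sum_{|\beta|=2k}\partial_{t,x}^\beta m\,\partial_{t,x}^\beta(m_0-m)\,\dx\dt\geq 0,
\end{split}
\end{equation*}
which, after moving all \(m\)-terms to the left, reads
\begin{equation*}
\begin{split}
&\int_0^T\!\!\!\int_{\Tt^d}\Big(u_t+\textstyle\sum a_{ij}u_{x_ix_j}-H(x,Du)+g(m,h(\boldsymbol{m}))+V\Big)(m-m_0)\,\dx\dt\\
&\qquad+\epsilon\int_0^T\!\!\!\int_{\Tt^d}\Big(m^2+\textstyle\sum_{|\beta|=2k}(\partial_{t,x}^\beta m)^2\Big)\,\dx\dt\leq R_1,
\end{split}
\end{equation*}
where \(R_1\) collects terms involving only \(m_0\), \(\sigma\), and their derivatives together with one factor of \(m\) or \(\partial_{t,x}^\beta m\). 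Analogously, (E3) with \(v=u-u_T\) (which is an equality) produces an identity of the same form with \(u^2+\sum(\partial_{t,x}^\beta u)^2\) on the left and a remainder \(R_2\) of the same type. Adding the two yields exactly the integrated identity \eqref{E3.2} upgraded to an inequality: the left-hand side contains \(mg(m,h(\boldsymbol{m}))\), \((m+\sigma)(-H(x,Du)+D_pH(x,Du)\cdot Du)\), \(m_0 H(x,Du)\), and the full \(\epsilon\)-quadratic block in \(m\) and \(u\), while the right-hand side consists of terms that do not depend on \(m\) pointwise in a quadratic fashion.

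Once this inequality is established, the remainder of the argument is verbatim that of Proposition~\ref{apriori1}: Assumption~\ref{Hcoer} produces the \(\frac{1}{C}(m+\sigma)|Du|^\gamma\) lower bound; Assumption~\ref{Hbdd} yields \(\frac{1}{C}m_0|Du|^\gamma\); Assumption~\ref{Bderi} together with Young's inequality absorbs the \(D_pH(x,Du)\cdot Du_T\) terms into half of the coercive term plus \(C\bigl(1+\|Du\|_{L^1(\Omega_T)}\bigr)\); the remaining cross terms involving \(\partial_{t,x}^\beta m\,\partial_{t,x}^\beta m_0\) and \(\partial_{t,x}^\beta u\,\partial_{t,x}^\beta u_T\) are handled by Young's inequality so that half of the \(\epsilon\)-quadratic terms on the left absorbs them; and Assumption~\ref{gint} absorbs \(m_0\,g(m,h(\boldsymbol{m}))\) into \(\tfrac12 mg(m,h(\boldsymbol{m}))+C\). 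The result is exactly \eqref{aprimu}.

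The only point requiring care is the sign handling dictated by the variational inequality (E2) (as opposed to an equality): because we use \(w=m_0\), the factor \(w-m=m_0-m\) appears, and only the sign-reversed inequality is correct. Since our goal is an \emph{upper} bound on quadratic quantities, this is the right direction, and I expect this bookkeeping to be the main obstacle (in particular verifying that the \(\epsilon\) cross terms \(\sum\partial^\beta_{t,x}m\,\partial^\beta_{t,x}m_0\) retain the sign needed for Young's absorption). Apart from that, no integration by parts on the weak solution itself is required, because the weak formulation has already encoded the boundary conditions \eqref{BCum} via the restrictions on the test spaces \(\Aa\) and \(\Bb_0\).
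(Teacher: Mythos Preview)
Your proposal is correct and follows essentially the same route as the paper: test (E2) with \(w=m_0\in\Aa\) and (E3) with \(v=u-u_T\in\Bb_0\) (the paper writes \(v=u_T-u\), which is equivalent since (E3) is an equality), add, and observe that one obtains \eqref{E3.2} with ``\(\leq\)'' in place of ``\(=\)''; then the estimates of Proposition~\ref{apriori1} go through verbatim. One small caveat: your closing remark that ``no integration by parts on the weak solution itself is required'' is slightly misleading---to put the summed inequality into the exact form \eqref{E3.2} you still integrate by parts the divergence and time-derivative terms, but this is routine since \((m,u)\in H^{2k}\times H^{2k}\) and the initial/terminal conditions \(m(0,\cdot)=m_0\), \(u(T,\cdot)=u_T\) are part of (E1).
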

%proof
\begin{proof}
Let $(m, u)$ be a weak solution to Problem~\ref{MP} in the sense of Definition~\ref{DMPWS1}. 
Using the properties (E2) and  (E3) in Definition~\ref{DMPWS1} with  $v = u_T -u\in \Bb_0$ and $w= m_0 \in \Aa$ and adding the resulting inequalities, we obtain \eqref{E3.2} with ``$=$'' replaced by ``$\leq$". Consequently, arguing as in the proof of Proposition~\ref{apriori1}, we obtain that $(m, u)$ satisfies \eqref{aprimu}.
\end{proof}

%Remark of A priori estimates
%\begin{remark}\label{ApriWS2}\marginpar{\tiny 
%\textcolor[rgb]{0.984314,0.00784314,0.027451}{
%do we
%need\\ Assumption~\ref{hhyp}\\ here?}}
%In Section~\ref{PfMT1}, we define $\widehat 
%g(m,\widehat h(\boldsymbol{m})):=
%g(m+m_0,h(\boldsymbol{m}+\boldsymbol{m_0}))$
% and $\widehat \sigma=\sigma+m_0$. Then,
%  because of the Assumptions~\ref{hhyp}--\ref{gint}
%   and the positivity of $\widehat \sigma$, 
%   Proposition~\ref{apriWS1} still holds for $\widehat g$ and %$\widehat \sigma$.
%\end{remark}

%
%
%a priori estimate of Du
\begin{cor}\label{apriDu1}
Consider Problem \ref{MP} and suppose that Assumptions~\ref{Hcoer}--\ref{AWC1} hold for some \(\gamma>1\). Then, there exists a positive constant, $C$, depending only on the
problem data, such that any weak solution $(m,u)$ to Problem~\ref{MP} satisfies \(\Vert Du\Vert_{L^\gamma(\Omega_T)}\leq C\).
\end{cor}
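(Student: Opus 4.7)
The plan is to combine the estimate \eqref{aprimu} from Proposition~\ref{apriWS1} with the lower bound from Assumption~\ref{AWC1} and exploit the strict positivity of $m_0$ together with H\"{o}lder's and Young's inequalities to absorb the $L^1$ norm of $Du$ appearing on the right-hand side.

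First, I would observe that since $m_0\in C^{4k}(\Tt^d)$ is strictly positive on the compact set $\Tt^d$, there is a constant $c_0>0$ with $m_0(x)\geq c_0$ for every $x\in\Tt^d$. Consequently, the third term in the left-hand side of \eqref{aprimu} dominates $\frac{c_0}{C}\int_{\Omega_T}|Du|^\gamma\,\dx\dt$. Next, Assumption~\ref{AWC1} lets me move the (possibly negative) coupling term $\int_{\Omega_T}mg(m,h(\boldsymbol{m}))\,\dx\dt$ to the right-hand side at the cost of a universal additive constant. Discarding all remaining non-negative terms on the left (the $(m+\sigma)|Du|^\gamma$ contribution and the $\epsilon$ terms), Proposition~\ref{apriWS1} therefore yields
\begin{equation*}
\frac{c_0}{C}\int_0^T\!\!\!\int_{\Tt^d}|Du|^\gamma\,\dx\dt \leq C\big(1+\|Du\|_{L^1(\Omega_T)}\big)+C.
\end{equation*}

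Then I would handle the $L^1$ norm by H\"{o}lder's inequality, using $|\Omega_T|<\infty$:
\begin{equation*}
\|Du\|_{L^1(\Omega_T)}\leq |\Omega_T|^{1-1/\gamma}\|Du\|_{L^\gamma(\Omega_T)},
\end{equation*}
so the previous display becomes $\|Du\|_{L^\gamma(\Omega_T)}^\gamma \leq C\big(1+\|Du\|_{L^\gamma(\Omega_T)}\big)$. Since $\gamma>1$, Young's inequality in the form $C\,\|Du\|_{L^\gamma(\Omega_T)} \leq \tfrac{1}{2}\|Du\|_{L^\gamma(\Omega_T)}^\gamma + C'$ lets me absorb the linear term into the left-hand side, giving $\|Du\|_{L^\gamma(\Omega_T)}^\gamma \leq C$, which is the desired conclusion.

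This proof is essentially an absorption argument and I do not foresee any real obstacle: the key input, Proposition~\ref{apriWS1}, is already established, and Assumptions~\ref{Hcoer}--\ref{AWC1} provide exactly the ingredients needed. The only small point to keep track of is that all constants remain independent of $\epsilon$, which is automatic since Proposition~\ref{apriWS1} already gives $\epsilon$-independent constants and the subsequent H\"{o}lder/Young steps involve only $\gamma$ and $|\Omega_T|$.
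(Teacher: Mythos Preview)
Your proof is correct and follows essentially the same approach as the paper's own proof: both use the strict positivity of $m_0$ to extract a uniform lower bound, invoke Assumption~\ref{AWC1} to control the coupling term from below, and then absorb $\|Du\|_{L^1(\Omega_T)}$ into $\|Du\|_{L^\gamma(\Omega_T)}^\gamma$ via Young's inequality. The paper's version is only slightly more compressed, writing the chain of inequalities in a single display.
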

\begin{proof}
%{\it Proof.}
Because $m_0$ is strictly positive, we have $c:=\min_{\Tt^d}m_0>0$ and, using Proposition~\ref{apriWS1} with Assumption~\ref{AWC1} and Young's inequality with $\gamma>1$, 
we obtain
\begin{align*}
-C + \frac{c}{C}\int_0^T\!\!\!\int_{\Tt^d}|Du(t,x)|^\gamma\,\dx\dt
&\leq
\int_0^T\!\!\!\int_{\Tt^d}\Big(mg(m,h(\boldsymbol{m}))+\frac{m_0}{C}|Du(t,x)|^\gamma\Big)\, \dx\dt
\\
&\leq C\big(1+\|Du\|_{L^1(\Omega_T)}\big)
\leq C +\frac{c}{2C}\int_0^T\!\!\!\int_{\Tt^d}|Du(t,x)|^\gamma\, \dx\dt.
\qedhere
\end{align*}
\end{proof}

%a priori estimate with  \sqrt{\epsilon}
\begin{cor}\label{aprisqrt}
Consider Problem \ref{MP} and suppose that
Assumptions~\ref{Hcoer}--\ref{AWC1}
 hold for some \(\gamma>1\). Then, there exists a positive constant,
$C$, depending only on the
problem data, such that any weak solution $(m,u)$ to Problem~\ref{MP}  satisfies \(\Vert \sqrt{\epsi} m\Vert_{H^{2k}(\Omega_T)} + \Vert \sqrt{\epsi} u\Vert_{H^{2k}(\Omega_T)}\leq C\).
\end{cor}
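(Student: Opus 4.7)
The plan is to leverage Proposition~\ref{apriWS1} and Corollary~\ref{apriDu1} to bound the $L^2$ and top-order seminorms uniformly in $\epsilon$, and then fill in the intermediate derivatives using a standard interpolation inequality.

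First, I would observe that by Corollary~\ref{apriDu1} and H\"older's inequality (since $\Omega_T$ is bounded and $\gamma>1$), we have
\begin{equation*}
\|Du\|_{L^1(\Omega_T)}\leq |\Omega_T|^{1-1/\gamma}\|Du\|_{L^\gamma(\Omega_T)}\leq C,
\end{equation*}
so the right-hand side of \eqref{aprimu} is bounded by a constant depending only on the data. Combining this with Assumption~\ref{AWC1} (which keeps the nonnegative term $mg(m,h(\boldsymbol{m}))$ controlled from below) and isolating the $\epsilon$-dependent terms in Proposition~\ref{apriWS1}, I obtain the uniform estimates
\begin{equation*}
\epsilon\|m\|_{L^2(\Omega_T)}^2+\epsilon\sum_{|\beta|=2k}\|\partial_{t,x}^\beta m\|_{L^2(\Omega_T)}^2\leq C,\qquad \epsilon\|u\|_{L^2(\Omega_T)}^2+\epsilon\sum_{|\beta|=2k}\|\partial_{t,x}^\beta u\|_{L^2(\Omega_T)}^2\leq C.
\end{equation*}

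Next, I would invoke the standard Gagliardo--Nirenberg interpolation inequality on the Lipschitz domain $\Omega_T$: for every $j\in\{1,\dots,2k-1\}$ there is a constant $C_j$ such that, for all $f\in H^{2k}(\Omega_T)$,
\begin{equation*}
\sum_{|\beta|=j}\|\partial_{t,x}^\beta f\|_{L^2(\Omega_T)}^2\leq C_j\|f\|_{L^2(\Omega_T)}^2+C_j\sum_{|\beta|=2k}\|\partial_{t,x}^\beta f\|_{L^2(\Omega_T)}^2.
\end{equation*}
Multiplying this inequality by $\epsilon$, applying it separately to $f=m$ and $f=u$, and summing over $0\leq j\leq 2k$ then controls the full $H^{2k}$-norm by the two quantities already bounded in the previous step. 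This yields
\begin{equation*}
\epsilon\|m\|_{H^{2k}(\Omega_T)}^2+\epsilon\|u\|_{H^{2k}(\Omega_T)}^2\leq C,
\end{equation*}
which is precisely the desired assertion $\|\sqrt{\epsilon}\,m\|_{H^{2k}(\Omega_T)}+\|\sqrt{\epsilon}\,u\|_{H^{2k}(\Omega_T)}\leq C$.

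There is no real obstacle here; the argument is essentially bookkeeping. The only minor point to keep in mind is that the interpolation constants $C_j$ must be independent of $\epsilon$ (they depend only on $\Omega_T$, $k$, and $d$), so that the final bound is genuinely $\epsilon$-independent, consistent with the convention on the constant $C$ set at the beginning of Section~\ref{Prows}.
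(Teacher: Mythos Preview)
Your proof is correct and follows the same route as the paper: invoke Proposition~\ref{apriWS1} together with Assumption~\ref{AWC1} and the positivity of $m$, $\sigma$, $m_0$ to isolate the $\epsilon$-weighted $L^2$ and top-order terms, then use Corollary~\ref{apriDu1} to bound the right-hand side. The only difference is that you spell out the H\"older step for $\|Du\|_{L^1}\leq C\|Du\|_{L^\gamma}$ and the Gagliardo--Nirenberg interpolation explicitly, whereas the paper leaves these implicit in the terse sentence ``From Corollary~\ref{apriDu1}, Corollary~\ref{aprisqrt} follows.''
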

%proof
\begin{proof}
Using  Proposition~\ref{apriWS1},
Assumption~\ref{AWC1}, and the positivity of $m$, $\sigma$, and $m_0$,  we obtain
\begin{equation*}
\begin{aligned}
{\epsilon} \int_0^T\!\!\!\int_{\Tt^d} \
\bigg(
m^2 + u^2 + \sum_{|\beta|=2k}(\partial_{t,x}^\beta m)^2
+ \sum_{ |\beta|= 2k}(\partial_{t,x}^\beta u)^2
 \bigg) \, \dx\dt
\leq C\big(1+ \|Du\|_{L^1(\Omega_T)}\big),
\end{aligned}
\end{equation*}
where \(C\) is a positive constant depending only on the
problem data. %From the  Gagliardo--Nirenberg interpolation inequality and Young's inequality, 
From Corollary~\ref{apriDu1}, 
Corollary~\ref{aprisqrt} follows.
\end{proof}

%4 Variational Problem
\section{A variational problem}\label{VP}
In this section, we investigate a variational problem whose Euler--Lagrange equation is related to the first equation in \eqref{HighdRP}. We show that there exists a unique minimizer, $m$, to this problem. Also, we examine properties of $m$ from which deduce the existence and uniqueness of a weak solution to Problem~\ref{MP}. 

Set $\widehat \sigma:=\sigma +m_0$ and $\widehat g(m,\widehat h(\boldsymbol{m})):=g(m+m_0,h(\boldsymbol{m}+\boldsymbol{m}_0))$. Given $(m, u) \in H^{2k-2}(\Omega_T) \times H^{2k-1}(\Omega_T)$ with $ m+m_0 \geq 0$, let $I_{(m, u)}: H^{2k}(\Omega_T) \to \Rr$, for $w\in H^{2k}(\Omega_T)$ be given by
\begin{equation}\label{defImu}
\begin{split}
I_{(m, u)}[w]: =&\,  \int_0^T\!\!\!\int_{\Tt^d} \bigg[ \frac{\epsilon }{2} \Big( \Big(w+\widehat\sigma + \sum_{|\beta|=2k}
\partial^{2\beta}_{t,x} \widehat\sigma\Big)^2 + \sum_{|\beta|=2k}(\partial_{t,x}^\beta w)^2 \Big)\\
&\quad+ 
\Big( u_t + \sum_{i,j=1}^da_{ij}u_{x_i x_j} - H(x,Du) + \widehat g(m,\widehat h(\boldsymbol{m}))- V \Big) w \bigg] \, \dx\dt.
\end{split}
\end{equation}
Next, we fix $(m_1,u_1)\in H^{2k-2}(\Omega_T)\times H^{2k-1}(\Omega_T)$ with $m_1+m_0\geq0$, and set $I_1=I_{(m_1,u_1)}$. We address the variational problem  of finding $m\in \Aa_0$ such that 
%
%variational problem
\begin{equation}\label{VP1}
I_1[m] =\inf_{w \in \Aa_0}I_1[w],
\end{equation}
where $\Aa_0$ is defined in \eqref{DAa0}.
%
%
%Existence of minimizer of variational problem
\begin{pro}\label{EMVP1} 
Let $H$, $g$, \(h\), $\sigma$,  $V$, \(\{a_{ij}\}_{i,j=1}^d\),
and \(m_0\) be as in Problem~\ref{MP}, and 
fix $(m_1, u_1) \in H^{2k-2}(\Omega_T) \times H^{2k-1}(\Omega_T)$ such that $m_1+m_0 \geq 0$. Then, there exists a unique $m \in \Aa_0$ satisfying \eqref{VP1}.
\end{pro}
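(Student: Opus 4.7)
The plan is to apply the direct method of the calculus of variations to the convex functional $I_1$ on the admissible set $\Aa_0$. Observe first that $I_1$ decomposes as the sum of a strictly convex quadratic part
\begin{equation*}
Q[w] := \frac{\epsilon}{2}\int_0^T\!\!\!\int_{\Tt^d}\bigg[\Big(w+\widehat\sigma + \sum_{|\beta|=2k}\partial^{2\beta}_{t,x}\widehat\sigma\Big)^2 + \sum_{|\beta|=2k}(\partial_{t,x}^\beta w)^2\bigg]\,\dx\dt
\end{equation*}
and a continuous linear part $L[w] := \int_{\Omega_T} \Phi\, w \,\dx\dt$, where $\Phi := u_{1,t} + \sum_{i,j} a_{ij} u_{1,x_ix_j} - H(x,Du_1) + \widehat g(m_1,\widehat h(\boldsymbol{m}_1)) - V$. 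Note that $\Phi\in L^2(\Omega_T)$: the regularity $(m_1,u_1)\in H^{2k-2}\times H^{2k-1}$ with $2k>(d+1)/2+3$ and Morrey's embedding give $u_1\in C^3(\overline\Omega_T)$ and $m_1\in C^1(\overline\Omega_T)$, so each term of $\Phi$ lies in $L^\infty(\Omega_T)$ (using the continuity assumptions on $H$, $V$, $g$ and property \eqref{hbounds} for $h$).

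The key step is to show that $I_1$ is coercive on $\Aa_0$ with respect to the $H^{2k}(\Omega_T)$ norm. Expanding the square in $Q$ and absorbing cross terms by Young's inequality, one gets
\begin{equation*}
Q[w] \geq \frac{\epsilon}{4}\int_{\Omega_T}\Big(w^2 + \sum_{|\beta|=2k}(\partial_{t,x}^\beta w)^2\Big)\dx\dt - C_{\epsilon,\widehat\sigma}.
\end{equation*}
Since every $w\in\Aa_0$ satisfies $w(0,\cdot)=0$, the Gagliardo--Nirenberg interpolation inequality controls all intermediate derivatives by $\|w\|_{L^2}$ and $\|\partial^{2k}_{t,x} w\|_{L^2}$. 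Thus $Q[w]\geq c_\epsilon\|w\|_{H^{2k}}^2 - C$, and the Cauchy--Schwarz bound $|L[w]|\leq \|\Phi\|_{L^2}\|w\|_{L^2}$ implies $I_1[w]\to\infty$ as $\|w\|_{H^{2k}}\to\infty$ in $\Aa_0$.

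Next I would establish weak lower semicontinuity and weak closedness of $\Aa_0$. Since $Q$ is convex and continuous on $H^{2k}(\Omega_T)$, it is weakly l.s.c.; and $L$ is weakly continuous. For the constraint $w+m_0\geq 0$: by Morrey's embedding $H^{2k}(\Omega_T)\hookrightarrow C(\overline\Omega_T)$ is compact, so any weakly convergent sequence $w_n\rightharpoonup m$ in $H^{2k}$ converges uniformly, preserving both the pointwise bound and the trace condition $m(0,\cdot)=0$. Taking a minimizing sequence $\{w_n\}\subset\Aa_0$ (nonempty since $w\equiv 0$ belongs to it), coercivity yields an $H^{2k}$-bound, and extraction of a weakly convergent subsequence produces a limit $m\in\Aa_0$ with $I_1[m]\leq\liminf I_1[w_n]$; hence $m$ minimizes $I_1$ on $\Aa_0$.

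Finally, uniqueness follows from strict convexity: if $m,\widetilde m\in\Aa_0$ both minimize, strict convexity of $Q$ (through the quadratic terms in both $w$ and the top-order derivatives) combined with the affine structure of $\Aa_0$ forces $m=\widetilde m$ almost everywhere, hence everywhere by continuity. The main subtlety I anticipate is the coercivity step, where one must carefully justify absorbing the mixed terms coming from $\widehat\sigma$ and from $\sum_{|\beta|=2k}\partial^{2\beta}_{t,x}\widehat\sigma$ into the quadratic part, and then invoking the Gagliardo--Nirenberg inequality under the zero initial trace to recover the full $H^{2k}$ norm.
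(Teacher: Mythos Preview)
Your proposal is correct and follows essentially the same direct-method argument as the paper: coercivity via Young's inequality and Gagliardo--Nirenberg, weak closedness of $\Aa_0$ through the compact Morrey embedding $H^{2k}\hookrightarrow C(\overline\Omega_T)$, weak lower semicontinuity of the convex quadratic part, and uniqueness by strict convexity. One small remark: the Gagliardo--Nirenberg step does not actually require the zero initial trace---the standard interpolation inequality on $\Omega_T$ already controls intermediate derivatives by $\|w\|_{L^2}$ and $\|D^{2k}_{t,x}w\|_{L^2}$---so that anticipated subtlety is not an issue.
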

%
%
%proof
\begin{proof}  
Invoking Young's inequality, $I_1[\cdot]$ is bounded from below and the bound depends on the problem data, $\epsilon$, $m_1$, and $u_1$. Thus, also taking
\(w=0\) as test function in \eqref{VP1},
we conclude that  the infimum in \eqref{VP1} is finite.

Let $\{w_{n}\}_{n=1}^{\infty} \subset \Aa_0$ be a minimizing sequence for \eqref{VP1}, and fix $\delta\in(0,1)$. 
Then, there exists $N\in \Nn$ 
such that, for all $n \geq N$, 
\begin{equation}\label{Bvp1}
I_1[w_n]<\inf_{w\in\Aa_0} I_1[w] +\delta \leq I_1[0] + 1=C. 
\end{equation}
By Morrey's embedding theorem, $H^{2k-3}(\Omega_T)$ is compactly embedded
in \(C^{0,l}(\overline \Omega_T)\) for
some \(l\in (0,1)\). In particular,
there exists a positive constant, \(C=C(\Omega_T,k,d,l)\),
such that,
for all \(\vartheta\in H^{2k-3}(\Omega_T)\),
we have
\begin{equation}
\label{eq:MorreyET}
\begin{aligned}
\Vert \vartheta \Vert_{C^{0,l}(\overline \Omega_T)} \leq C \Vert \vartheta \Vert_{H^{2k-3}(
\Omega_T)}.
\end{aligned}
\end{equation}
From \eqref{eq:MorreyET}, we get 
$m_1\in C^{1,l}(\overline\Omega_T)$ and $u_1 \in C^{2,l }(\overline\Omega_T)$ for some $l\in(0,1)$, and  
$C_0:=\max\{\|\widehat\sigma\|_{C^{4k}(\Omega_T)},\| {u_1}_t + \sum_{i,j=1}^da_{ij}{u_1}_{x_ix_j} - H(\cdot,Du_1) +\widehat g(m_1,\widehat h(\boldsymbol{m}_1))- V \|_{L^\infty(\Omega_T)}\} <\infty$.
Then, by Young's inequality and \eqref{Bvp1}, for all $n\geq N$, we obtain
\begin{equation}\label{eqvp-3}
\begin{split}
\frac{\epsilon }{2}\int_0^T\!\!\!\int_{\Tt^d} \Big( w_{n}^2 + \sum_{|\beta|=2k}
(\partial_{t,x}^\beta  w_n)^2\Big) \, \dx\dt 
&\leq \int_0^T\!\!\!\int_{\Tt^d}C_0 (\epsilon +1 ) |w_n| \, \dx\dt + C\\
&\leq 
\frac{\epsilon }{4} \int_0^T\!\!\!\int_{\Tt^d}\Big( w_n^2 
+ 
\sum_{|\beta|=2k}(\partial_{t,x}^{\beta} w_n)^2\Big)\, \dx\dt 
+\frac{C}{\epsilon}+C.
\end{split}
\end{equation}
Invoking the Gagliardo--Nirenberg interpolation inequality, we get
\begin{equation}\label{eqvp-4}
\|\partial_{t,x}^\beta w_n\|_{L^2(\Omega_T)}^2 
\leq 
C \big(\|w_n\|_{L^2(\Omega_T)}^2 +\|D_{t,x}^{2k}w_n\|_{L^2(\Omega_T)}^2\big),
\end{equation}
where $\beta\in\Nn_0^{d+1}$ is any multi-index such that $|\beta|\leq 2k$.
Hence, by \eqref{eqvp-3} and \eqref{eqvp-4}, we obtain that $\{w_n\}_{n=1}^{\infty}$ is bounded in $H^{2k}(\Omega_T)$.
Therefore, $w_n \rightharpoonup m$ weakly in $H^{2k}(\Omega_T)$ for some $m\in H^{2k}(\Omega_T)$, extracting a subsequence if necessary. 
Furthermore, by Morrey's embedding theorem, $w_n \to m$ in $C^{2,l}(\overline \Omega_T)$ for some $l\in (0,1)$.
Consequently, because $w_n +m_0 \geq 0$ and $w_n(0,x)=0$, also $m + m_0 \geq0$ and $m(0,x)=0$. Thus,   $m \in \Aa_0$. 
Also, $w_n\to m$ in $L^2(\Omega_T)$ and $\Vert D_{t,x}^{2k} m\Vert_{L^2(\Omega_T)}^2 \leq \liminf_{n \to \infty} \Vert D_{t,x}^{2k} w_n \Vert_{L^2(\Omega_T)}^2$; hence, $I_1[m]\leq \liminf_n I_1[w_n]=\inf_{w \in \Aa_0}I_1[w]\leq I_1[m]$, from which we conclude that $m$ is a minimizer of $I_1$ over $\Aa_0$.

Next, we verify uniqueness. Suppose that $m$, $\widetilde{m} \in \Aa_0$ are minimizers of $I_1$ over $\Aa_0$
 with $m \neq \widetilde{m}$. Then, $\frac{m + \widetilde{m}}{2} \in \Aa_0$,
 $m-\widetilde{m}\in C^0(\overline\Omega_T)$,
and $\int_0^T\!\!\!\int_{\Tt^d}(m-\widetilde{m})^2\,\dx\dt>0$.
  Thus,
\begin{equation}\label{eqvp-5}  
 \begin{aligned}
 I_1\left[\frac{m + \widetilde{m}}{2}\right]
 &=\int_0^T\!\!\!\int_{\Tt^d} \bigg[ \frac{\epsilon }{2}
  \Big( \Big(\frac{m+\widetilde{m}}{2}+\widehat\sigma+\sum_{|\beta|=2k}
\partial^{2\beta}_{t,x}\widehat\sigma\Big)^2 
  + \sum_{|\beta|=2k}\Big(\frac{\partial_{t,x}^\beta m+\partial_{t,x}^\beta \widetilde{m}}{2}\Big)^2\Big)\\
   &\quad\quad+ \Big( {u_1}_t + \sum_{i,j=1}^da_{ij}(x){u_1}_{x_ix_j}  - H(x,Du_1) +\widehat g(m_1,\widehat h(\boldsymbol{m}_1))- V \Big)
    \Big(\frac{m+\widetilde{m}}{2}\Big) \bigg]\, \dx\dt \\ 
 &=\frac{1}{2}\int_0^T\!\!\!\int_{\Tt^d} \bigg[
     \frac{\epsilon }{2} \Big( \Big(m+\widehat\sigma+\sum_{|\beta|=2k}
\partial^{2\beta}_{t,x}\widehat\sigma\Big)^2 
     + \sum_{|\beta|=2k}(\partial_{t,x}^\beta m )^2\Big)\\
   &\quad \quad+ \Big(  {u_1}_t + \sum_{i,j=1}^da_{ij}(x){u_1}_{x_ix_j} - H(x,Du_1) +\widehat g(m_1,\widehat h(\boldsymbol{m}_1))- V \Big)m \bigg] \, \dx\dt \\
   &\quad+\frac{1}{2}\int_0^T\!\!\!\int_{\Tt^d} \bigg[
    \frac{\epsilon }{2} \Big( \Big(\widetilde{m}+\widehat\sigma+\sum_{|\beta|=2k}
\partial^{2\beta}_{t,x}\widehat\sigma\Big)^2 
    + \sum_{|\beta|=2k}(\partial_{t,x}^\beta \widetilde{m})^2 \Big)\\
   &\quad \quad+ \Big( {u_1}_t + \sum_{i,j=1}^da_{ij}(x){u_1}_{x_ix_j} - H(x,Du_1) +\widehat g(m_1,\widehat h(\boldsymbol{m}_1))- V \Big)\widetilde{m} \bigg] \, \dx\dt \\
  &\quad- \frac{\epsilon}{8}\int_0^T\!\!\!\int_{\Tt^d} \Big[ (m - \widetilde{m})^2  + \sum_{|\beta|=2k}(\partial_{t,x}^\beta m - \partial_{t,x}^\beta \widetilde{m})^2  \Big] \, \dx\dt
\\  &<\frac{1}{2}I_1[m] + \frac{1}{2}I_1[\widetilde{m}] = \min_{w \in \Aa_0}I_1[w],
\end{aligned}
\end{equation}
which contradicts the fact that $m$ and $\widetilde{m}$ are minimizers.
Hence, we have $m=\widetilde{m}$.
\end{proof}

% a priori estimate of m
\begin{cor}\label{aprimvp1}
Let $H$, $g$, \(h\), $\sigma$,  $V$, \(\{a_{ij}\}_{i,j=1}^d\),
and \(m_0\) be as in Problem~\ref{MP}, fix  $(m_1, u_1) \in H^{2k-2}(\Omega_T) \times H^{2k-1}(\Omega_T)$ with $m_1+m_0 \geq 0$, and let $m \in \Aa_0$ be the unique solution to \eqref{VP1}. Set $C_0:=\|{u_1}_t + \sum_{i,j=1}^da_{ij}{u_1}_{x_ix_j} - H(\cdot,Du_1) +\widehat g(m_1,\widehat h(\boldsymbol{m}_1))- V \|_{L^\infty(\Omega_T)}$. Then, there exists a positive constant, $C$, depending only on the problem data
and on \(C_0\), such that
 $\Vert m\Vert_{H^{2k}(\Omega_T)} \leq C $.
\end{cor}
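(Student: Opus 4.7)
The plan is essentially to extract a quantitative version of the bound already implicit in the compactness argument used in the proof of Proposition~\ref{EMVP1}, keeping careful track of how the resulting constant depends on $C_0$ and on the problem data. Since $m\in\Aa_0$ is the unique minimizer and $0\in\Aa_0$ (as $m_0>0$ gives $0+m_0\geq 0$, and clearly $0$ vanishes at $t=0$), I would start from $I_1[m]\leq I_1[0]$. A direct computation from \eqref{defImu} gives
\begin{equation*}
I_1[0]=\frac{\epsilon}{2}\int_0^T\!\!\!\int_{\Tt^d}\Big(\widehat\sigma+\sum_{|\beta|=2k}\partial^{2\beta}_{t,x}\widehat\sigma\Big)^2\dx\dt,
\end{equation*}
which is bounded by a constant depending only on the data, since $\widehat\sigma=\sigma+m_0\in C^{4k}(\overline\Omega_T)$.

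The core estimate then mirrors \eqref{eqvp-3}: expanding the squared term in \eqref{defImu} produces a nonnegative quadratic part $\tfrac{\epsilon}{2}\int_{\Omega_T}\bigl(m^2+\sum_{|\beta|=2k}(\partial_{t,x}^\beta m)^2\bigr)\dx\dt$, plus lower-order terms in the data, plus a linear-in-$m$ integral whose coefficient is bounded in $L^\infty$ by $C(1+C_0)$ by the definition of $C_0$ and the regularity of $\widehat\sigma$. Applying Young's inequality with a small parameter to absorb a fraction of $\int m^2$ into the left-hand side, I expect to obtain
\begin{equation*}
\frac{\epsilon}{4}\int_0^T\!\!\!\int_{\Tt^d}\Big(m^2+\sum_{|\beta|=2k}(\partial_{t,x}^\beta m)^2\Big)\,\dx\dt \leq C\bigl(1+C_0^2\bigr),
\end{equation*}
where $C$ depends only on the problem data (including the fixed $\epsilon\in(0,1)$) and on $C_0$.

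Finally, to upgrade this $L^2$-plus-top-order-seminorm bound into a full $H^{2k}$-bound, I would invoke the Gagliardo--Nirenberg interpolation inequality exactly as in \eqref{eqvp-4}, estimating $\|\partial_{t,x}^\beta m\|_{L^2(\Omega_T)}$ for each intermediate multi-index $\beta\in\Nn_0^{d+1}$ with $|\beta|<2k$ by $\|m\|_{L^2(\Omega_T)}$ and $\|D_{t,x}^{2k}m\|_{L^2(\Omega_T)}$, both of which have just been controlled. Summing over such $\beta$ yields $\|m\|_{H^{2k}(\Omega_T)}\leq C$ with the claimed dependence. I do not foresee any substantive obstacle; the only subtlety is the bookkeeping that isolates the dependence on $C_0$ and the data, which is precisely what distinguishes this corollary from the qualitative existence statement of Proposition~\ref{EMVP1}.
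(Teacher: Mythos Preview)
Your proposal is correct and follows essentially the same approach as the paper: start from $I_1[m]\leq I_1[0]$, reproduce the estimate \eqref{eqvp-3} with $w_n$ replaced by $m$, and then invoke the Gagliardo--Nirenberg inequality \eqref{eqvp-4} to pass to the full $H^{2k}$-norm. The paper's proof is in fact just the one-line observation that \eqref{eqvp-3} and \eqref{eqvp-4} hold with $w_n$ replaced by $m$; your write-up simply unpacks that observation.
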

\begin{proof}
As $I_1[m]\leq I_1[0] $, it follows that \eqref{eqvp-3} and \eqref{eqvp-4} hold with $w_n$ replaced by $m$,
 which yields the conclusion.
\end{proof}

% Variational Inequality
\begin{pro}\label{PVI}
Let $H$, $g$,  \(h\), $\sigma$,  $V$, \(\{a_{ij}\}_{i,j=1}^d\),
and \(m_0\) be as in Problem \ref{MP}, fix
$(m_1, u_1) \in H^{2k-2}(\Omega_T) \times H^{2k-1}(\Omega_T)$ with $m_1+m_0\geq 0$, and let $m \in \Aa_0$ be the unique solution to \eqref{VP1}. Then, for any $w \in \Aa_0$, $m$ satisfies
\begin{equation}\label{VI1}
\begin{aligned}
&\int_0^T\!\!\!\int_{\Tt^d} \Big( {u_1}_t + \sum_{i,j=1}^da_{ij}(x){u_1}_{x_ix_j} - H(x,Du_1) +\widehat g(m_1,\widehat h(\boldsymbol{m}_1))- V \Big)( w - m )  \, \dx\dt \\
&\quad+\int_0^T\!\!\!\int_{\Tt^d}\Big[\epsilon\Big ( m+\widehat\sigma +\sum_{|\beta|=2k}
\partial^{2\beta}_{t,x}\widehat\sigma\Big ) ( w - m ) 
+ \epsilon \sum_{|\beta|=2k}\partial_{t,x}^\beta m (\partial_{t,x}^\beta w-\partial_{t,x}^\beta m)\Big] \, \dx\dt 
\geq 0.
\end{aligned}
\end{equation}
\end{pro}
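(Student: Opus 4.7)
The plan is to derive \eqref{VI1} as the first-order optimality condition for \eqref{VP1}, exploiting the convexity of \(\Aa_0\) and the quadratic structure of \(I_1\).

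First I would verify that \(\Aa_0\) is convex: given \(m,w\in\Aa_0\) and \(\tau\in[0,1]\), the combination \(m_\tau := m+\tau(w-m) = (1-\tau)m+\tau w\) lies in \(H^{2k}(\Omega_T)\), satisfies \(m_\tau(0,\cdot)=0\) on \(\Tt^d\), and
\[m_\tau+m_0 = (1-\tau)(m+m_0)+\tau(w+m_0)\geq 0,\]
so \(m_\tau\in\Aa_0\). Consequently, since \(m\) minimizes \(I_1\) over \(\Aa_0\) by Proposition~\ref{EMVP1}, the real-valued map \(\phi(\tau) := I_1[m_\tau]\) satisfies \(\phi(\tau)\geq\phi(0)\) for every \(\tau\in[0,1]\).

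Inspecting \eqref{defImu}, the functional \(I_1\) is the sum of a linear and a quadratic functional in its argument, so \(\phi\) is a polynomial in \(\tau\) of degree at most two; in particular, differentiation under the integral sign is automatic. A direct computation of \(\phi'(0)\), separating the linear from the quadratic contribution, gives
\begin{align*}
\phi'(0) &= \int_0^T\!\!\!\int_{\Tt^d} \Big({u_1}_t + \textstyle\sum_{i,j=1}^d a_{ij}(x){u_1}_{x_ix_j} - H(x,Du_1) +\widehat g(m_1,\widehat h(\boldsymbol{m}_1))- V\Big)(w-m)\,\dx\dt \\
&\quad+\epsilon\int_0^T\!\!\!\int_{\Tt^d}\Big[\Big(m+\widehat\sigma+\textstyle\sum_{|\beta|=2k}\partial^{2\beta}_{t,x}\widehat\sigma\Big)(w-m) + \textstyle\sum_{|\beta|=2k}\partial_{t,x}^\beta m\,(\partial_{t,x}^\beta w-\partial_{t,x}^\beta m)\Big]\,\dx\dt,
\end{align*}
which is exactly the left-hand side of \eqref{VI1}. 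Writing \(\phi(\tau)-\phi(0) = a\tau^2+b\tau\), the inequality \(\phi(\tau)-\phi(0)\geq 0\) for \(\tau\in[0,1]\) gives, upon dividing by \(\tau>0\) and letting \(\tau\to 0^+\), that \(b=\phi'(0)\geq 0\), which is \eqref{VI1}.

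I do not expect any substantial obstacle: this is the standard derivation of a variational inequality from a minimization over a convex, non-linear subset of a Hilbert space. The only subtlety worth emphasizing is that, because \(\Aa_0\) is not a vector space (owing to the pointwise constraint \(m+m_0\geq 0\)), the admissible perturbations are one-sided directions \(w-m\) with \(w\in\Aa_0\), which is why \eqref{VI1} is an inequality rather than an equality.
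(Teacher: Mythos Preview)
Your proposal is correct and follows essentially the same argument as the paper: both exploit the convexity of \(\Aa_0\), set \(\phi(\tau)=I_1[m+\tau(w-m)]\), and obtain \eqref{VI1} from \(\phi'(0)\geq 0\) by computing the difference quotient and letting \(\tau\to 0^+\). Your explicit verification that \(\Aa_0\) is convex and your observation that \(\phi\) is a quadratic polynomial (so differentiability is trivial) are minor elaborations, but the core approach is identical.
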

%
%
%proof
\begin{proof} 
Let $w \in \Aa_0$. For $\tau \in [0,1]$, we have $m + \tau(w - m)=( 1 - \tau)m + \tau w \in \Aa_0$.
Thus, the mapping $i: [0,1] \to \Rr$ given by
\begin{equation*}
i[\tau] := I_1 \big[m + \tau(w - m)\big]
\end{equation*}
is a $C^\infty$-function. Because 
 $i(0)\leq i(\tau)$ for all $0\leq \tau \leq 1$, we have $i'(0) \geq 0$. 
On the other hand, for $0<\tau\leq1$,
we get
\begin{equation*}
 \begin{aligned}
\frac{1}{\tau}\big(i(\tau)-i(0)\big)=&\,\int_0^T\!\!\!\int_{\Tt^d} \Big ( {u_1}_t + \sum_{i,j=1}^da_{ij}(x){u_1}_{x_ix_j} - H(x,Du_1) +\widehat g(m_1,\widehat h(\boldsymbol{m}_1))-V \Big)( w - m ) \, \dx\dt \\
&\quad + {\epsilon} \int_0^T\!\!\!\int_{\Tt^d} \Big[
\Big( m+\widehat\sigma +\sum_{|\beta|=2k}
\partial^{2\beta}_{t,x}\widehat\sigma \Big)( w - m) +\sum_{|\beta|=2k}\partial_{t,x}^\beta
m (\partial_{t,x}^\beta w - \partial_{t,x}^\beta
m)\Big] \, \dx\dt\\
&\quad +\frac{\epsilon\tau}{2} \int_0^T\!\!\!\int_{\Tt^d} \Big[( w - m)^2+  \sum_{|\beta|=2k}(\partial_{t,x}^\beta w -\partial_{t,x}^\beta m )^2\Big] \, \dx\dt.
 \end{aligned}
\end{equation*}
Thus, letting $\tau\to 0^+$ in the preceding inequality and using the inequality $i'(0)\geq 0$, we obtain \eqref{VI1}.
\end{proof}

% Euler Lagrange Equation and Inequality
\begin{pro}%\label{ELEI}
Let $H$, $g$, \(h\), $\sigma$,  $V$, \(\{a_{ij}\}_{i,j=1}^d\),
and \(m_0\) be as in Problem~\ref{MP}, fix $(m_1, u_1) \in H^{2k-2}(\Omega_T) \times H^{2k-1}(\Omega_T)$ with $m_1+m_0 \geq 0$, and let $m$ be the unique solution of \eqref{VP1}. Set $\widehat\Omega_T = \{ (t,x) \in \Omega_T\ |\  m(t,x) +m_0(x) > 0 \}$.
%$\Omega_T_2 : = \{x \in \Omega_T\ |\  m(x)=0 \}$ respectively. 
Then,  $m$ satisfies
\begin{align*}
&
{u_1}_t + \sum_{i,j=1}^da_{ij}(x){u_1}_{x_ix_j} - H(x,Du_1) +\widehat g(m_1,\widehat h(\boldsymbol{m}_1)) - V 
+\epsilon \Big( m+\widehat\sigma+ \sum_{|\beta|=2k}
\partial^{2\beta}_{t,x}( m+\widehat\sigma ) \Big)= 0 
\end{align*}
pointwise in \(\widehat\Omega_T\) and
\begin{equation*}
{u_1}_t + \sum_{i,j=1}^da_{ij}(x){u_1}_{x_ix_j} - H(x,Du_1) +\widehat g(m_1,\widehat h(\boldsymbol{m}_1)) -V+\epsilon\Big ( m+\widehat\sigma + \sum_{|\beta|=2k}
\partial^{2\beta}_{t,x}( m+\widehat\sigma ) \Big)\geq 0 
\end{equation*}
in the sense of distributions in $\Omega_T$.
\end{pro}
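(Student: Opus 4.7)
The plan is to derive both the equation on $\widehat\Omega_T$ and the distributional inequality on $\Omega_T$ from the variational inequality \eqref{VI1} established in Proposition~\ref{PVI}, by testing against carefully chosen admissible variations. This is essentially the same scheme as in Remark~\ref{rmkmEL}, adapted to the present variational (rather than weak-solution) setting, and it amounts to a standard Euler--Lagrange argument combined with integration by parts.

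First, I would observe that, since $2k>\frac{d+1}{2}+3$, the inequality \eqref{eq:MorreyET} together with $m\in H^{2k}(\Omega_T)$ implies that $m$ is continuous on $\overline{\Omega_T}$. Because $m_0\in C^{4k}(\Tt^d)$, the function $m+m_0$ is continuous, and hence $\widehat\Omega_T=\{(t,x)\in\Omega_T\ |\ m(t,x)+m_0(x)>0\}$ is an open subset of $\Omega_T$. To establish the equation on $\widehat\Omega_T$, I would fix an arbitrary $w_1\in C_c^\infty(\widehat\Omega_T)$. By compactness of $\supp(w_1)\subset\widehat\Omega_T$ and continuity of $m+m_0$, there exists $c>0$ such that $m+m_0\geq c$ on $\supp(w_1)$, so for all $\tau\in\Rr$ with $|\tau|$ small enough, $w:=m+\tau w_1$ satisfies $w+m_0\geq 0$; moreover, $w(0,\cdot)=0$ on $\Tt^d$ because $\supp(w_1)\subset\Omega_T=(0,T)\times\Tt^d$ avoids $\{t=0\}$. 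Thus $w\in\Aa_0$, and I plug it into \eqref{VI1}. The resulting expression is linear in $\tau$, so the sign freedom of $\tau$ forces the corresponding linear functional in $w_1$ to vanish. Integrating by parts in the term $\epsilon\sum_{|\beta|=2k}\partial_{t,x}^\beta m\,\partial_{t,x}^\beta w_1$ (which is justified since $w_1$ has compact support in $\widehat\Omega_T$), I transfer the top-order derivatives onto $m$, producing the $\epsilon\sum_{|\beta|=2k}\partial_{t,x}^{2\beta}m$ term. The fundamental lemma of the calculus of variations, applied to the arbitrary $w_1\in C_c^\infty(\widehat\Omega_T)$, then yields the stated identity in $\widehat\Omega_T$.

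For the distributional inequality on $\Omega_T$, I would pick an arbitrary nonnegative $w_2\in C_c^\infty(\Omega_T)$ and set $w:=m+w_2$. Then $w+m_0=m+m_0+w_2\geq 0$ and $w(0,\cdot)=m(0,\cdot)=0$, so $w\in\Aa_0$. Substituting in \eqref{VI1} yields directly
\begin{equation*}
\begin{aligned}
&\int_0^T\!\!\!\int_{\Tt^d}\Big({u_1}_t+\sum_{i,j=1}^d a_{ij}{u_1}_{x_ix_j}-H(x,Du_1)+\widehat g(m_1,\widehat h(\boldsymbol{m}_1))-V\Big)w_2\,\dx\dt\\
&\quad+\int_0^T\!\!\!\int_{\Tt^d}\Big[\epsilon\Big(m+\widehat\sigma+\sum_{|\beta|=2k}\partial_{t,x}^{2\beta}\widehat\sigma\Big)w_2+\epsilon\sum_{|\beta|=2k}\partial_{t,x}^\beta m\,\partial_{t,x}^\beta w_2\Big]\,\dx\dt\geq 0,
\end{aligned}
\end{equation*}
and an integration by parts (valid thanks to the compact support of $w_2$ inside $\Omega_T$) moves the derivatives off $w_2$ onto $m$. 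Since $w_2\geq 0$ is otherwise arbitrary, this is precisely the claimed inequality in the sense of distributions on $\Omega_T$.

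The only real subtlety I expect is the verification that the admissible variations $m+\tau w_1$ and $m+w_2$ genuinely lie in $\Aa_0$; this is controlled by the openness of $\widehat\Omega_T$ (for the first case) and by the fact that $\Omega_T$ excludes $\{t=0\}$ (for both cases), combined with the continuity of $m$ coming from Morrey's embedding. Once these inclusions are secured, the rest of the argument is a direct application of the variational inequality and integration by parts, and no further analytic input is required.
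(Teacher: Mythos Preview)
Your proposal is correct and follows essentially the same approach as the paper: the paper's proof simply instructs the reader to argue as in Remark~\ref{rmkmEL}, replacing (E2) by the variational inequality \eqref{VI1} and invoking the Morrey embedding, which is exactly what you carry out in detail. Your treatment is in fact more explicit than the paper's (you justify openness of $\widehat\Omega_T$ and the admissibility of the test functions in $\Aa_0$), but the logical structure---perturb by $\tau w_1$ with $w_1\in C_c^\infty(\widehat\Omega_T)$ for the equality, by a nonnegative $w_2\in C_c^\infty(\Omega_T)$ for the inequality, then integrate by parts---is identical.
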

\begin{proof}
To verify the statement, it is enough to argue as in Remark~\ref{rmkmEL}, using \eqref{VI1} instead of (E2) and recalling the embedding $H^{2k-2}(\Omega_T)\hookrightarrow C^{1,l}(\overline\Omega_T)$ for some $l\in(0,1)$.
\end{proof}

%5 Lax--Milgram Theorem
\section{A problem given by a bilinear form}\label{BF}
Here, we consider a problem given by a bilinear form associated with the second equation in \eqref{HighdRP}. We use Lax--Milgram theorem to show that there exists a unique solution, $u$, to this problem. Also, we establish a uniform bound for $u$. In Section~\ref{PfMT1}, we apply these results to prove that there exists a unique weak solution to Problem~\ref{MP}.

Let $\Bb_0$ be as in \eqref{DBb0}. Suppose that  $H$,  $\sigma$,   \(\{a_{ij}\}_{i,j=1}^d\),
 \(m_0\), and $\xi$ are as in Problem~\ref{MP} and, as in the previous section, let $\widehat \sigma =\sigma+m_0$. 
Given $(m, u) \in H^{2k-2}(\Omega_T) \times H^{2k-1}(\Omega_T)$ with $m+m_0 \geq 0$, we define a bilinear form, $B:\Bb_0 \times \Bb_0 \to \Rr$, and a linear functional, $f_{(m, u)}:H^{2k}(\Omega_T) \to \Rr$, by setting, 
\begin{equation}
\label{defBfmu}
\begin{aligned}
% bilinear form B
B[v_1,v_2]:=&\, \epsilon \int_0^T\!\!\!\int_{\Tt^d}  \Big( v_1v_2 +\sum_{|\beta|=2k}\partial_{t,x}^\beta v_1 \partial_{t,x}^\beta v_2 \Big)\, \dx\dt,\\
% functional f_(m,u)
\big\langle f_{(m, u)},v\big\rangle
:=&\,\int_0^T\!\!\!\int_{\Tt^d} \Big[- m_t + \sum_{i,j=1}^d\big(a_{ij}(x)(m+\widehat\sigma)\big)_{x_ix_j} \\
& \quad
+ \div\big((m+\widehat\sigma) D_p H(x,Du)\big) - 
\epsilon \Big(\xi+\sum_{|\beta|=2k}\partial_{t,x}^{2\beta}
\xi\Big) \Big]v \, \dx\dt
\end{aligned}
\end{equation}
for $v_1,v_2\in \Bb_0$ and $v\in H^{2k}(\Omega_T)$.

Fix $(m_1,u_1)\in H^{2k-2}(\Omega_T) \times H^{2k-1}(\Omega_T)$ with \(m_1+m_0\geq0\), and take $f_1:= f_{(m_1,u_1)}$. Next, we study the problem of finding $u\in \Bb_0$ satisfying\begin{equation}\label{BL1}
B[u,v]=\langle f_1,v \rangle \quad \mbox{for all}\ v \in \Bb_0.
\end{equation}

%Existence and uniqueness of weak solutions of the mapping B
\begin{pro}\label{ESBP1}
Let $H$,  $\sigma$,
  \(\{a_{ij}\}_{i,j=1}^d\),
 \(m_0\), and $\xi$ be as in Problem~\ref{MP}, and fix $(m_1, u_1) \in H^{2k-2}(\Omega_T) \times H^{2k-1}(\Omega_T)$ with $m_1+m_0\geq0$. Then, there exists a unique  solution, $u \in \Bb_0$, to \eqref{BL1}. Moreover, there exists a positive constant, $C$,
depending only on the problem data,
on \(\epsilon\), on $\Vert m_1\Vert_{H^{2k-2}(\Omega_T)}$, and  
on $\Vert u_1\Vert_{H^{2k-1}(\Omega_T)}$,
%and on the constant in \eqref{eq:MorreyET}, 
such that
$\Vert u\Vert_{H^{2k}(\Omega_T)} \leq C$.
\end{pro}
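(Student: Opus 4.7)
The plan is to apply the Lax--Milgram theorem to the pair $(B, f_1)$ on the Hilbert space $\Bb_0$, which is a closed linear subspace of $H^{2k}(\Omega_T)$ since the trace map $v\mapsto v(T,\cdot)$ is continuous from $H^{2k}(\Omega_T)$ into $H^{2k-1/2}(\Tt^d)$. Existence and uniqueness of $u\in\Bb_0$ satisfying \eqref{BL1} will then follow in a single step, and the claimed bound on $\|u\|_{H^{2k}(\Omega_T)}$ will come from testing $v=u$ in \eqref{BL1} and combining coercivity of $B$ with continuity of $f_1$.

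I would first verify that $B$ is symmetric, bilinear, and continuous on $\Bb_0\times\Bb_0$; in fact $|B[v_1,v_2]|\leq\epsilon\|v_1\|_{H^{2k}(\Omega_T)}\|v_2\|_{H^{2k}(\Omega_T)}$ by the Cauchy--Schwarz inequality. For coercivity, the key observation is that
\begin{equation*}
B[v,v]=\epsilon\Big(\|v\|_{L^2(\Omega_T)}^2+\sum_{|\beta|=2k}\|\partial_{t,x}^\beta v\|_{L^2(\Omega_T)}^2\Big)
\end{equation*}
already controls the two ``endpoints'' of the $H^{2k}$-norm, while the intermediate derivatives $\partial_{t,x}^\beta v$ with $0<|\beta|<2k$ are controlled by these two endpoints via the Gagliardo--Nirenberg interpolation inequality, exactly as used in \eqref{eqvp-4}. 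This yields $B[v,v]\geq c\epsilon\|v\|_{H^{2k}(\Omega_T)}^2$ for some constant $c>0$ depending only on the problem data (in particular, independent of $\epsilon$, $m_1$, and $u_1$).

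Next, I would verify that $f_1$ extends to a bounded linear functional on $H^{2k}(\Omega_T)$, and therefore restricts to one on $\Bb_0$. Because $2k>\tfrac{d+1}{2}+3$, the Morrey embedding \eqref{eq:MorreyET} gives $H^{2k-3}(\Omega_T)\hookrightarrow C^{0,l}(\overline\Omega_T)$ for some $l\in(0,1)$, which promotes $m_1\in H^{2k-2}(\Omega_T)$ to $C^{1,l}(\overline\Omega_T)$ and $u_1\in H^{2k-1}(\Omega_T)$ to $C^{2,l}(\overline\Omega_T)$, with norms controlled by $\|m_1\|_{H^{2k-2}(\Omega_T)}$ and $\|u_1\|_{H^{2k-1}(\Omega_T)}$. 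Expanding $\div((m_1+\widehat\sigma)D_pH(x,Du_1))$ by the chain rule and using $H\in C^2(\Tt^d\times\Rr^d)$ together with Assumption~\ref{Bderi} applied at the pointwise-bounded argument $Du_1$, this nonlinear term lies in $L^\infty(\Omega_T)\subset L^2(\Omega_T)$ with a bound depending on the problem data, $\|m_1\|_{H^{2k-2}(\Omega_T)}$, and $\|u_1\|_{H^{2k-1}(\Omega_T)}$. The remaining terms $\partial_t m_1$, $(a_{ij}(m_1+\widehat\sigma))_{x_ix_j}$, and $\epsilon(\xi+\sum_{|\beta|=2k}\partial_{t,x}^{2\beta}\xi)$ all belong to $L^2(\Omega_T)$, with norms controlled by the problem data, $\epsilon$, $\|m_1\|_{H^{2k-2}(\Omega_T)}$, and $\|\xi\|_{C^{4k}(\overline\Omega_T)}$. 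Hence $|\langle f_1,v\rangle|\leq C\|v\|_{L^2(\Omega_T)}\leq C\|v\|_{H^{2k}(\Omega_T)}$ for a constant $C$ of the form claimed in the proposition.

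The Lax--Milgram theorem then produces a unique $u\in\Bb_0$ satisfying \eqref{BL1}, and testing with $v=u$ gives $c\epsilon\|u\|_{H^{2k}(\Omega_T)}^2\leq B[u,u]=\langle f_1,u\rangle\leq C\|u\|_{H^{2k}(\Omega_T)}$, from which the quantitative bound follows after dividing through by $\|u\|_{H^{2k}(\Omega_T)}$. The main obstacle will be the $L^\infty$-estimate for $\div((m_1+\widehat\sigma)D_pH(x,Du_1))$; this is precisely where the dimensional threshold $2k>\tfrac{d+1}{2}+3$ is needed, as it guarantees through Morrey's embedding that both $Dm_1$ and $D^2u_1$ are pointwise bounded and therefore that the full divergence can be estimated uniformly in $\Omega_T$.
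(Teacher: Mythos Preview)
Your proposal is correct and follows essentially the same approach as the paper: verify boundedness of $B$ via Cauchy--Schwarz, coercivity of $B$ via the Gagliardo--Nirenberg interpolation inequality \eqref{eqvp-4}, and boundedness of $f_1$ by showing the integrand lies in $L^2(\Omega_T)$ using the Morrey embedding \eqref{eq:MorreyET}; then apply Lax--Milgram and test with $v=u$ to obtain the norm bound. The paper's proof is nearly identical, differing only in that it finishes the quantitative estimate with Young's inequality rather than dividing through, and it does not spell out the chain-rule analysis of the divergence term as explicitly as you do.
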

%proof
\begin{proof}
Because $(m_1,u_1)\in \big(H^{2k-2}(\Omega_T) \times H^{2k-1}(\Omega_T)\big) \cap \big( C^{1,l}(\overline\Omega_T) \times 
C^{2,l}(\overline\Omega_T)\big)$ for some $l\in(0,1)$ (see \eqref{eq:MorreyET}),
we obtain $\big( - {m_1}_t + \sum_{i,j=1}^d\big(a_{ij}(x)(m+\widehat\sigma)\big)_{x_ix_j} + \div\big( (m_1+\widehat\sigma) D_p H(x,Du_1)\big) -\epsilon \big(\xi+\sum_{|\beta|=2k}\partial_{t,x}^{2\beta}
\xi\big)\big) \in L^2(\Omega_T)$. Hence, from H\"older's inequality, $f_1$ is bounded in \(L^2(\Omega_T)\). 

By Cauchy--Schwarz inequality, we get $|B[v_1,v_2]| \allowbreak \leq \epsilon \Vert v_1\Vert_{H^{2k}(\Omega_T)}\Vert v_2\Vert_{H^{2k}(\Omega_T)}$ for all   $v_1,v_2\in \Bb_0$. 
Furthermore, by the Gagliardo--Nirenberg interpolation
inequality (see \eqref{eqvp-4}), we have $B[v_1,v_1]\geq \epsi C \Vert v_1\Vert_{H^{2k}(\Omega_T)}^2$ for all   $v_1\in \Bb_0$. 
By applying the Lax--Milgram theorem to \eqref{BL1}, there exists a unique solution, $u\in \Bb_0$, to \eqref{BL1}.

Since $c_0:=\big\Vert - {m_1}_t + \sum_{i,j=1}^d(a_{ij}(x)m)_{x_ix_j} + \div\big((m_1+\widehat\sigma)D_p H(x,Du_1)\big) - \epsilon \big(\xi+\sum_{|\beta|=2k}\partial_{t,x}^{2\beta}
\xi\big)  \big\Vert_{ L^2(\Omega_T)}^2 <\infty$, from Young's 
inequality and the Gagliardo--Nirenberg
interpolation
inequality, we have
\begin{equation*}
\epsilon C\Vert u\Vert_{H^{2k}(\Omega_T)}^2
\leq
B[u,u] = \langle f_1, u \rangle
\leq
\frac{\epsilon C}{2}\Vert u\Vert_{L^2(\Omega_T)}^2 + \frac{c_0}{4C\epsilon}.
\end{equation*}
Therefore, we have $\Vert u\Vert_{H^{2k}(\Omega_T)}^2
 \leq \frac{c_0}{4(C\epsi)^2} $, from which Lemma~\ref{ESBP1} follows.
\end{proof}

%6 Proof of the main theorem
\section{Proof of Theorem~\ref{MT}}\label{PfMT1}
Here, we prove Theorem~\ref{MT}. First, by Schaefer's fixed-point theorem, we verify that there exists a unique weak solution to \eqref{HighdRP} with $u_T\equiv0$.
Next, we generalize this result for any $u_T\in C^{4k}(\Tt^d)$.

Suppose that $u_T\equiv 0$. As in Sections~\ref{VP} an \ref{BF}, let  $\widehat g(m,\widehat h(\boldsymbol{m}))=g(m+m_0, h(\boldsymbol{m}+\boldsymbol{m_0}))$ and \(\widehat \sigma = \sigma + m_0\).
Let $\widetilde\Aa_0$ and $\widetilde\Bb_0$ be the sets containing \(\Aa_0\) and $\Bb_0$ (see  \eqref{DAa0}
and \eqref{DBb0}), respectively,  given by
\begin{align*}
&\widetilde\Aa_0:=\{w \in H^{2k-2}(\Omega_T)\ |\ w(0,x)=0,\ w+m_0\geq 0 \},\\
&\widetilde\Bb_0:=\{v \in H^{2k-1}(\Omega_T)\ |\ v(T,x)=0 \}.
\end{align*}
 Consider the mapping $A: \widetilde\Aa_0 \times \widetilde\Bb_0 \to \widetilde\Aa_0 \times \widetilde\Bb_0$ defined, for   $(m_1, u_1) \in \widetilde \Aa_0
\times \widetilde\Bb_0$,  by
\begin{equation}\label{OpeA}
       A
    \begin{bmatrix}
      m_1  \\
      u_1        
    \end{bmatrix}
  :=
    \begin{bmatrix}
   m_1^\ast \\
   u_1^\ast
    \end{bmatrix},
\end{equation}
where $m_1^\ast \in \Aa_0$ is the unique minimizer to \eqref{VP1} and $u_1^\ast\in \Bb_0 $ is the unique solution to \eqref{BL1}.

%\begin{remark}
%In this paper, we suppose that $m_0$ is strict positive to obtain a weak convergence as $\epsilon\to 0$. Hence, supposing that $m_0\equiv0$ is still fine to prove Theorem~\ref{MP}. Particularly, first, we construct a unique fixed point over $\Aa_0\times \Bb_0$ which is a weak solution to \eqref{HighdRP}. Next, we prove Theorem~\ref{MT}.
%\end{remark}

% the mapping A is continuous and compact
\begin{pro}\label{ACC} 
Let  $H$, $g$, \(h\), $\sigma$,  $V$, \(\{a_{ij}\}_{i,j=1}^d\),
 \(m_0\), and $\xi$ be as in Problem~\ref{MP},
and assume that Assumption~\ref{hhyp}
holds. Then,  the mapping $A: \widetilde\Aa_0 \times \widetilde\Bb_0 \to \widetilde\Aa_0 \times \widetilde\Bb_0$ in \eqref{OpeA} is continuous and compact.
\end{pro}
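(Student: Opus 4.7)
The proof rests on the a priori bounds from Corollary~\ref{aprimvp1} and Proposition~\ref{ESBP1}, combined with the compact Sobolev embeddings $H^{2k}(\Omega_T)\hookrightarrow H^{2k-2}(\Omega_T)$ and $H^{2k}(\Omega_T)\hookrightarrow H^{2k-1}(\Omega_T)$ given by Rellich's theorem. The plan is first to show that $A$ sends bounded subsets of $\widetilde\Aa_0\times\widetilde\Bb_0$ into bounded subsets of $\Aa_0\times\Bb_0\subset H^{2k}(\Omega_T)\times H^{2k}(\Omega_T)$, which yields compactness at once; and then, for continuity, to pass to the limit in the variational inequality \eqref{VI1} and the bilinear identity \eqref{BL1}, invoking the uniqueness statements in Propositions~\ref{EMVP1} and \ref{ESBP1}.

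For compactness, let $\{(m_{1,n},u_{1,n})\}$ be bounded in $\widetilde\Aa_0\times\widetilde\Bb_0$. Since $2k>(d+1)/2+3$, Morrey's embedding yields a uniform bound on $\{(m_{1,n},u_{1,n})\}$ in $C^{1,l}(\overline\Omega_T)\times C^{2,l}(\overline\Omega_T)$ for some $l\in(0,1)$. Together with Assumption~\ref{hhyp} and the estimate \eqref{hbounds}, this gives a uniform $L^\infty(\Omega_T)$ bound on the quantity $C_0$ from Corollary~\ref{aprimvp1} evaluated at $(m_{1,n},u_{1,n})$; that corollary then produces a uniform $H^{2k}(\Omega_T)$ bound on the first component $m_{1,n}^\ast$ of $A(m_{1,n},u_{1,n})$. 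Proposition~\ref{ESBP1} likewise yields a uniform $H^{2k}(\Omega_T)$ bound on the second component $u_{1,n}^\ast$. Rellich's theorem then extracts a subsequence converging strongly in $H^{2k-2}(\Omega_T)\times H^{2k-1}(\Omega_T)$, proving compactness.

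For continuity, assume $(m_{1,n},u_{1,n})\to(m_1,u_1)$ strongly in $H^{2k-2}(\Omega_T)\times H^{2k-1}(\Omega_T)$. By the compactness step, the sequence $\{(m_{1,n}^\ast,u_{1,n}^\ast)\}$ is bounded in $H^{2k}\times H^{2k}$; along a subsequence, $(m_{1,n}^\ast,u_{1,n}^\ast)\rightharpoonup(\tilde m,\tilde u)$ weakly in $H^{2k}\times H^{2k}$ and strongly in $C^{2,l}\times C^{2,l}$. The constraints $\tilde m(0,\cdot)=0$, $\tilde m+m_0\geq 0$, and $\tilde u(T,\cdot)=0$ are preserved under $C^0$ convergence, so $(\tilde m,\tilde u)\in\Aa_0\times\Bb_0$. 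Moreover, Morrey's embedding applied to the data yields $Du_{1,n}\to Du_1$ uniformly, hence $H(\cdot,Du_{1,n})\to H(\cdot,Du_1)$ uniformly; Assumption~\ref{hhyp}(b) gives $\widehat h(\boldsymbol{m}_{1,n})\to \widehat h(\boldsymbol{m}_1)$ in $H^{2k-2}\hookrightarrow C^0$, and the remaining linear terms in \eqref{VI1} and \eqref{BL1} pass to the limit by elementary weak-strong pairing. Therefore $\tilde m$ satisfies the variational inequality \eqref{VI1} with data $(m_1,u_1)$, and $\tilde u$ satisfies \eqref{BL1} with the same data; uniqueness (Propositions~\ref{EMVP1} and \ref{ESBP1}) forces $(\tilde m,\tilde u)=A(m_1,u_1)$. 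Since every subsequence produces the same limit, the full sequence converges strongly in $H^{2k-2}\times H^{2k-1}$, giving continuity.

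The main obstacle is the passage to the limit in the nonlinear terms $\widehat g(m_{1,n},\widehat h(\boldsymbol{m}_{1,n}))$ and $H(\cdot,Du_{1,n})$ inside \eqref{VI1}. The hypothesis $2k>(d+1)/2+3$ is designed exactly so that Morrey upgrades strong $H^{2k-2}\times H^{2k-1}$ convergence of the inputs to strong $C^{1,l}\times C^{2,l}$ convergence, which makes the composition with $g$, $h$, and $H$ continuous; after that, the passage to the limit becomes a routine weak-convergence argument, and uniqueness supplies the standard subsequence trick to upgrade from subsequential to full convergence.
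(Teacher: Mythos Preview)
Your argument is correct but follows a different route from the paper's. The paper proves continuity by a direct, quantitative estimate: exploiting the strict convexity of \(I_1\) via the parallelogram-type identity \eqref{eqvp-5}, it bounds \(\|m_1^\ast-{m_1}_n^\ast\|_{H^{2k}}\) explicitly by \(\|m_1-{m_1}_n\|_{H^{2k-2}}+\|u_1-{u_1}_n\|_{H^{2k-1}}\), and similarly uses the coercivity of \(B\) together with \(\langle f_1-f_n,u_1^\ast-{u_1}_n^\ast\rangle\) for the second component. This yields strong \(H^{2k}\) convergence of \(A(m_{1,n},u_{1,n})\), not just convergence in \(H^{2k-2}\times H^{2k-1}\). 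Your approach---extract a weakly convergent subsequence in \(H^{2k}\times H^{2k}\), pass to the limit in \eqref{VI1} and \eqref{BL1}, identify the limit via uniqueness, then use the subsequence principle---is the standard ``compactness-plus-uniqueness'' route and is perfectly legitimate, though it gives a slightly weaker conclusion (convergence only in the target space \(H^{2k-2}\times H^{2k-1}\), which is all that is needed).

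One point deserves more care. You describe the terms in \eqref{VI1} as ``linear'' and say they pass to the limit by ``elementary weak-strong pairing,'' but \eqref{VI1} contains the quadratic term \(-\epsilon\sum_{|\beta|=2k}\int|\partial_{t,x}^\beta m_{1,n}^\ast|^2\), which is not a weak--strong product. To pass to the limit you need weak lower semicontinuity of the \(H^{2k}\)-seminorm: since \(\sum_{|\beta|=2k}\|\partial^\beta \tilde m\|_{L^2}^2\le\liminf_n\sum_{|\beta|=2k}\|\partial^\beta m_{1,n}^\ast\|_{L^2}^2\), the sign of this term is such that the inequality ``\(\ge 0\)'' survives in the limit (one obtains \(L\ge\limsup_n\text{LHS}_n\ge 0\) with \(L\) the limiting expression). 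This works, and you should say so explicitly. After that, you also use implicitly that any \(\tilde m\in\Aa_0\) satisfying \eqref{VI1} for all \(w\in\Aa_0\) must be the minimizer of \(I_1\); this is the standard equivalence between first-order optimality and minimality for a strictly convex \(C^1\) functional over a convex set, so Proposition~\ref{EMVP1} indeed forces \(\tilde m=m_1^\ast\).
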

%
%
%proof
\begin{proof}

We start by verifying the continuity of \(A\).
Let $(m_1, u_1),\, ( {m_1}_n, {u_1}_n) \in \widetilde\Aa_0  \times \widetilde\Bb_0 $ be such that 
${m_1}_n \to m_1$ in $H^{2k-2}(\Omega_T)$ and ${u_1}_n \to u_1$ in $H^{2k-1}(\Omega_T)$. We want to prove that 
${m_1}_n^\ast \to m_1^\ast$ in $H^{2k-2}(\Omega_T)$ and ${u_1}_n^\ast \to u_1^\ast$ in $H^{2k-1}(\Omega_T)$, where 
\begin{equation*}
\begin{aligned}
        \begin{bmatrix}
         m_1^\ast \\
         u_1^\ast
        \end{bmatrix}
= A \begin{bmatrix}
      m_1  \\
      u_1        
    \end{bmatrix}
 \enspace \text{and} \enspace 
 \begin{bmatrix}
         {m_1}_n^\ast \\
         {u_1}_n^\ast
        \end{bmatrix}
= A \begin{bmatrix}
      {m_1}_n  \\
      {u_1}_n        
    \end{bmatrix}. 
\end{aligned}
\end{equation*}

Recalling \eqref{defImu} and \eqref{defBfmu}, we define $I_n:=I_{({m_1}_n, {u_1}_n)}$ and $f_n := f_{({m_1}_n, {u_1}_n)}$. Because of the definition of \(A\), we have that 
$ (m_1^\ast,  u_1^\ast)$ and $ ({m_1}_n^\ast, {u_1}_n^\ast)$ belong to $  \Aa_0 \times \Bb_0$
 and satisfy, for all \(v\in \Bb_0\),
\begin{equation*}
\begin{aligned}
I_1[m_1^\ast]=\min_{w \in \Aa_0} I_1[w],\enspace I_n[{m_1}_n^\ast]=\min_{w \in \Aa_0} I_n[w],
\enspace
B[u_1^\ast,v] = \langle f_1,v \rangle,\enspace B[{u_1}_n^\ast,v] = \langle f_n,v \rangle.
\end{aligned}
\end{equation*}
Because  ${m_1}_n^\ast$ and $m_1^\ast$ are minimizers,  using the  second equality in
\eqref{eqvp-5}, we get
\begin{equation*}
\begin{aligned}
I_1[m_1^\ast]+I_n[{m_1}_n^\ast] &\leq I_1\left[\frac{m_1^\ast + {m_1}_n^\ast}{2}\right] + I_n\left[\frac{m_1^\ast
+ {m_1}_n^\ast}{2}\right] \\
&=\frac12 I_1[m_1^\ast] + \frac12 I_1[{m_1}_n^\ast] + \frac12 I_n[m_1^\ast] + \frac12 I_n[{m_1}_n^\ast]
\\&\quad\,-\int_0^T\!\!\!\int_{\Tt^d}\frac{\epsilon}{4}\Big[
\big(m_1^\ast-{m_1}_n^\ast\big)^2 +
\sum_{|\beta|=2k}\big(\partial_{t,x}^\beta
m_{1}^\ast - \partial_{t,x}^\beta {m_1}_n^\ast
\big)^2 \Big] \, \dx\dt, 
\end{aligned}
\end{equation*}
which can be rewritten as
\begin{equation*}\label{eq:bymin}
\begin{aligned}
&\int_0^T\!\!\!\int_{\Tt^d}\frac{\epsilon}{4}\Big[
\big(m_1^\ast-{m_1}_n^\ast\big)^2 +
\sum_{|\beta|=2k}\big(\partial_{t,x}^\beta
m_{1}^\ast - \partial_{t,x}^\beta {m_1}_n^\ast
\big)^2 \Big] \, \dx\dt\\&\quad \leq \frac12 \big(I_1[{m_1}_n^\ast]
+  I_n[m_1^\ast] - 
I_1[m_1^\ast]-  I_n[{m_1}_n^\ast]\big). \end{aligned}
\end{equation*}
%
%Observing that the \(\epsi\)-dependent terms on the right-hand side of \eqref{eq:bymin} cancel,
Hence, Young's inequality yields
\begin{equation}\label{eq1-6-1}
 \begin{split}
  &\int_0^T\!\!\!\int_{\Tt^d}\frac{\epsilon}{4}\Big[ \big(m_1^\ast-{m_1}_n^\ast\big)^2 + \sum_{|\beta|=2k}\big(\partial_{t,x}^\beta m_{1}^\ast - \partial_{t,x}^\beta {m_1}_n^\ast \big)^2 \Big] \, \dx\dt \\
  \leq
  &\int_0^T\!\!\!\int_{\Tt^d}\frac{\epsilon}{8}(m_1^\ast
- {m_1}_n^\ast)^2 + \frac{1}{2\epsilon} \Big( | {u_1}_t - {{u_1}_n}_t |  + \sum_{i,j=1}^d|a_{ij}(x)| |{u_1}_{x_ix_j}-{{u_1}_n}_{x_ix_j}|\\
 & \quad +  \big| H(x,Du_1) - H(x,D{u_1}_n) \big|+ | \widehat g( m_1,\widehat h(\boldsymbol{m}_1))
- \widehat g({m_1}_n,\widehat h({\boldsymbol{m}_1}_n) ) | \Big)^2 \, \dx\dt. 
\end{split}
\end{equation}
From \eqref{eq:MorreyET}, there exists
a positive constant,
\(c>0\), independent of \(n\in\Nn\),
such that 
\begin{equation}
\label{eq:bounds}
\begin{aligned}
\sup_{n\in\Nn} 
\big(\Vert m_1\Vert_{L^{\infty}(\Omega_T)}
+ \Vert {m_1}_n\Vert_{L^{\infty}(\Omega_T)}
+\Vert u_1\Vert_{W^{1,\infty}(\Omega_T)}
+ \Vert {u_1}_n\Vert_{W^{1,\infty}(\Omega_T)}+
\Vert m_0\Vert_{L^{\infty}(\Omega_T)}
\big)
< c.
\end{aligned}
\end{equation}
Then, using \eqref{eq1-6-1}, \eqref{eq:bounds}, the facts that \(H\), \(D_pH\), and \(g\)
are locally Lipschitz functions, $\sigma$
and \(a_{ij}\) are bounded, and \eqref{hFD}--\eqref{hbounds}
hold with \(\kappa=2k-1\) an with \(m\) replaced by \( {m_1}_n+m_0\)
and \(\bar
m\) replaced by  \(m_1 + m_0\), we can find a positive constant,
\(C\), independent of \(n\in\Nn\), such
that %
\begin{equation*}%\label{eq1-6-1bis}
 \begin{split}
   &\int_0^T\!\!\!\int_{\Tt^d}\frac{\epsilon}{8}\Big[
\big(m_1^\ast-{m_1}_n^\ast\big)^2 +
\sum_{|\beta|=2k}\big(\partial_{t,x}^\beta
m_{1}^\ast - \partial_{t,x}^\beta {m_1}_n^\ast
\big)^2 \Big] \, \dx\dt\\&\quad\leq \frac{C}{\epsi}\Big(\Vert{m_1}_n - m_1\Vert_{H^{2k-2}(\Omega_T)}^2 +
\Vert{u_1}_n
- u_1\Vert_{H^{2k-1}(\Omega_T)}^2\Big).
\end{split}
\end{equation*}
Because ${m_1}_n \to m_1$ in $H^{2k-2}(\Omega_T)$ and ${u_1}_n \to u_1$ in $ H^{2k-1}(\Omega_T)$,  we have 
\begin{equation*}
\lim_{n\to\infty}\Vert m_1^\ast - {m_1}_n^\ast \Vert_{L^2(\Omega_T)} = 0, \quad  \lim_{n\to\infty}\sum_{|\beta|=2k}\Vert \partial_{t,x}^\beta m_{1}^\ast - \partial_{t,x}^\beta {m_1}_n^\ast\Vert_{L^2(\Omega_T)} = 0.
\end{equation*}
Then, invoking the Gagliardo--Nirenberg interpolation inequality, we obtain ${m_1}_n^\ast  \to m_1^\ast$ in $H^{2k}(\Omega_T)$,
and thus  in $H^{2k-2}(\Omega_T)$. 
%as $n\to \infty$.

Next, we prove that ${u_1}_n^\ast$ converges to $u_1^\ast$ in $H^{2k}(\Omega_T)$,
and thus in $H^{2k-1}(\Omega_T)$. Recalling  \eqref{defBfmu} and \eqref{eq:bounds}, similar arguments
to those above yield
\begin{equation}
\label{eq:eq2-6-1}
\begin{aligned}
& \,\epsilon C\Vert  
 u_{1}^\ast - u_{n}^\ast\Vert_{H^{2k}(\Omega_T)}^2
 \leq  B[u_1^\ast-{u_1}_n^\ast, u_1^\ast-{u_1}_n^\ast]
= 
\langle f_1-f_n, u_1^\ast-{u_1}_n^\ast \rangle \\
 \leq &\, \int_0^T\!\!\!\int_{\Tt^d} \bigg[|{{m_1}_n}_t - {m_1}_t ||u_1^\ast-{u_1}_n^\ast | 
 + \bigg|\sum_{i,j=1}^d\big(a_{ij}(x)({m_1}_n - m_1)\big)_{x_ix_j}\bigg||u_1^\ast-{u_1}_n^\ast | \\
 &\quad
 + \Big(|m_1 D_pH(x,Du_1)-{m_1}_n D_pH(x,Du_1)  + {m_1}_n D_pH(x,Du_1)- {m_1}_n D_pH(x,D{u_1}_n)| \\
 &\quad
 + \sigma | D_pH(x,Du_1) - D_pH(x,D{u_1}_n)| \Big)|Du_1^\ast-D{u_1}_n^\ast|\bigg]\,\dx\dt\\
 \leq  &\, \frac{\epsilon C}{2}\Vert 
 u_{1}^\ast - u_{n}^\ast\Vert_{H^{2k}(\Omega_T)}
 + \frac{\tilde C}{\epsi}\big(\Vert m_1 - {m_1}_n \Vert_{H^2(\Omega_T)}^2
+ \Vert D u_1 -D {u_1}_n \Vert_{L^2(\Omega_T)}^2\big)
\end{aligned}
\end{equation}
for some constants \(C, \, \tilde C>0\)  independent of \(n\in\Nn\). From \eqref{eq:eq2-6-1},
 we conclude that ${u_1}_n^\ast \to u_1^\ast$ in $H^{2k}(\Omega_T)$.

Finally, we prove the compactness of $A$. We want to show that if \(\{({m_1}_n,{u_1}_n)\}_{n=1}^\infty\) is a bounded sequence in 
$\widetilde\Aa_0 \times \widetilde\Bb_0$, then \(\{A({m_1}_n,{u_1}_n)\}_{n=1}\) is pre-compact  in 
$\widetilde\Aa_0 \times \widetilde\Bb_0$. This is a consequence of
\eqref{eq:MorreyET}, Assumption~\ref{hhyp}, Corollary~\ref{aprimvp1}, Proposition~\ref{ESBP1},
and the compact embedding  \(H^{2k}(\Omega_T) \times H^{2k}(\Omega_T) \hookrightarrow H^{2k-2}(\Omega_T)\times H^{2k-1}(\Omega_T)\) due to the Rellich--Kondrachov theorem. 
\end{proof}

As we noted before, applying Schaefer's fixed-point theorem, we verify the existence of weak solutions to Problem~\ref{MP}. We introduce next the precise version of this theorem that we use, see Theorem~6.2 in \cite{FGT1}.

\begin{theorem}\label{thm:SFPT} Let \(X\) be a convex and closed subset of a Banach space such that   \(0\in X\). Suppose that \(A:X \to X\) is a continuous and compact mapping such that the set 
\begin{equation*}
\begin{aligned}
\big\{w\in X  |\  w=\lambda A[w] \hbox{ for some } \lambda \in [0,1]\big\}
\end{aligned}
\end{equation*}
is bounded. Then, \(A\) has a fixed point.
\end{theorem}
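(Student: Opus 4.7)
\medskip

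\noindent\textbf{Proposal.} The plan is to deduce this from Schauder's fixed-point theorem by retracting $A$ onto a suitable bounded convex set. By hypothesis, the set $S:=\{w\in X\mid w=\lambda A[w]\text{ for some }\lambda\in[0,1]\}$ is bounded, so there exists $R>0$ such that $\|w\|<R$ for every $w\in S$. Note that $0\in S$ (take $\lambda=0$), so this is a genuine constraint, not vacuous. Set $K:=X\cap\overline{B_R(0)}$; since $X$ is closed and convex with $0\in X$, and $\overline{B_R(0)}$ is closed and convex, $K$ is closed, convex, bounded and nonempty.

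Next I would define a retraction $r:X\to K$ by
\begin{equation*}
r(w):=\begin{cases} w, & \|w\|\leq R,\\ \dfrac{R\,w}{\|w\|}, & \|w\|>R.\end{cases}
\end{equation*}
Because $0\in X$ and $X$ is convex, for any $w\in X$ with $\|w\|>R$ the point $r(w)=\tfrac{R}{\|w\|}w+(1-\tfrac{R}{\|w\|})\cdot 0$ lies in $X$; hence $r$ is a well-defined continuous map from $X$ into $K$. Composing, set $\tilde A:=r\circ A:K\to K$. Since $A$ is continuous and compact as a map $X\to X$, and $r$ is continuous with bounded range, $\tilde A$ is continuous and $\tilde A(K)$ is relatively compact in $K$. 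Schauder's fixed-point theorem then yields $w^*\in K$ with $\tilde A[w^*]=w^*$.

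It remains to upgrade the fixed point of $\tilde A$ into a fixed point of $A$; this is the key step. If $\|A[w^*]\|\leq R$, then $r(A[w^*])=A[w^*]$, so $w^*=A[w^*]$ and we are done. Otherwise $\|A[w^*]\|>R$ and $w^*=\tfrac{R}{\|A[w^*]\|}A[w^*]=\lambda A[w^*]$ with $\lambda:=R/\|A[w^*]\|\in[0,1)$. Then $w^*\in S$, so $\|w^*\|<R$; but by construction $\|w^*\|=\|r(A[w^*])\|=R$, a contradiction. Hence the first alternative holds and $w^*$ is the desired fixed point.

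The main obstacle is the last contradiction step, where one must carefully exploit the strict inequality $\|w\|<R$ on $S$ (rather than $\leq R$) so as to rule out the projection case. Verifying that $r$ maps $X$ into itself also relies on convexity of $X$ and $0\in X$; both hypotheses are therefore essential and cannot be dropped.
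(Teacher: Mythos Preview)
Your argument is correct and is the standard derivation of Schaefer's theorem from Schauder's theorem via radial retraction onto a closed ball intersected with $X$; the use of the strict bound $\|w\|<R$ on $S$ to rule out the projected case is exactly the right observation, and your verification that $r$ lands in $X$ correctly uses both convexity of $X$ and $0\in X$.

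There is nothing to compare against in the paper itself: Theorem~\ref{thm:SFPT} is stated there without proof and simply attributed to \cite[Theorem~6.2]{FGT1}. Your proposal therefore supplies what the paper outsources, and does so by the classical route one would find in most references for Schaefer's theorem.
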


% Existence and uniqueness of weak solutions of the mapping A
\begin{pro}\label{ExiUniS1}
Consider Problem~\ref{MP}, let $A$ be the mapping defined in \eqref{OpeA}, and suppose that Assumptions~\ref{Hconv}--\ref{AWC1}
and \ref{Hmono} hold for some \(\gamma>1\). Then, there exists a unique weak solution, $(m,u) \in H^{2k}(\Omega_T) \times H^{2k}(\Omega_T)$, to Problem~\ref{MP} with $u_T=0$ in the sense of Definition~\ref{DMPWS1}.
\end{pro}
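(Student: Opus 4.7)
The plan is to apply Schaefer's fixed-point theorem (Theorem~\ref{thm:SFPT}) to the operator $A$ defined in \eqref{OpeA}, regarded as acting on the closed convex set $X := \widetilde\Aa_0 \times \widetilde\Bb_0$ inside the Banach space $H^{2k-2}(\Omega_T)\times H^{2k-1}(\Omega_T)$. The set $X$ contains $0$, and continuity and compactness of $A$ have been established in Proposition~\ref{ACC}. What remains is to prove that
\begin{equation*}
S := \bigl\{(\mu,\nu)\in X : (\mu,\nu) = \lambda\, A[(\mu,\nu)] \text{ for some } \lambda\in[0,1]\bigr\}
\end{equation*}
is bounded in $H^{2k-2}(\Omega_T)\times H^{2k-1}(\Omega_T)$. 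Once a fixed point $(\mu,\nu)$ of $A$ is produced, setting $m := \mu + m_0$ and $u := \nu$ will yield a weak solution to Problem~\ref{MP} with $u_T = 0$; uniqueness will follow from the monotonicity assumption (Assumption~\ref{Hmono}).

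For the boundedness of $S$, the case $\lambda = 0$ gives $\mu = \nu = 0$. For $\lambda \in (0,1]$, write $(\mu^\ast,\nu^\ast) := A[(\mu,\nu)] = (\mu/\lambda,\nu/\lambda)$, so that $\mu^\ast \in \Aa_0$ satisfies the variational inequality \eqref{VI1} with inputs $(\mu,\nu)$, and $\nu^\ast \in \Bb_0$ solves \eqref{BL1}. Testing \eqref{VI1} with $w = 0 \in \Aa_0$ yields an upper bound for $\epsilon\int_{\Omega_T}[(\mu^\ast)^2 + \sum_{|\beta|=2k}(\partial_{t,x}^\beta\mu^\ast)^2]\,\dx\dt$ in terms of $\int \Theta^{\mu,\nu}\mu^\ast$ and lower-order $\epsilon$-terms (here $\Theta^{\mu,\nu}$ denotes the Hamilton--Jacobi bracket from \eqref{VI1}), while testing \eqref{BL1} with $v = \nu^\ast \in \Bb_0$ gives the analogous identity for $\nu^\ast$. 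Substituting $\mu^\ast = \mu/\lambda$, $\nu^\ast = \nu/\lambda$, multiplying by $\lambda^2$, summing, and absorbing the lower-order $\epsilon$-terms via Young's inequality produces
\begin{equation*}
\frac{\epsilon}{2}\int_{\Omega_T}\bigl[\mu^2 + \nu^2\bigr]\,\dx\dt + \epsilon\int_{\Omega_T}\sum_{|\beta|=2k}\bigl[(\partial_{t,x}^\beta\mu)^2 + (\partial_{t,x}^\beta\nu)^2\bigr]\,\dx\dt \leq \lambda\,R(\mu,\nu) + C\epsilon,
\end{equation*}
where, under the identifications $m = \mu + m_0$, $u = \nu$, $\widehat\sigma = \sigma + m_0$, and $\widehat g(\mu,\widehat h(\boldsymbol{\mu})) = g(m,h(\boldsymbol{m}))$, the quantity $R(\mu,\nu)$ is precisely the expression that arises in \eqref{E3.2} of Proposition~\ref{apriori1} after the integrations by parts made possible by $\mu(0,\cdot) = 0$ and $\nu(T,\cdot) = 0$. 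Applying Assumptions~\ref{Hcoer}--\ref{AWC1} and Young's inequality exactly as in that proof, together with Assumption~\ref{AWC1} and the positivity $m_0 \geq c > 0$ to absorb the $\lambda\|Du\|_{L^1}$-contribution, yields $\lambda R(\mu,\nu) \leq C$ uniformly in $\lambda \in [0,1]$. Combined with the Gagliardo--Nirenberg interpolation inequality (cf.~\eqref{eqvp-4}), this controls $\|\mu\|_{H^{2k-2}(\Omega_T)} + \|\nu\|_{H^{2k-1}(\Omega_T)}$ by a constant depending only on $\epsilon$ and the problem data.

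Schaefer's theorem therefore produces a fixed point $(\mu,\nu) \in X$ of $A$. Setting $m := \mu + m_0 \in \Aa$ and $u := \nu \in \Bb_0 = \Bb$ (since $u_T = 0$), property~(E1) of Definition~\ref{DMPWS1} is immediate. Property~(E2) follows from Proposition~\ref{PVI} upon the change of test function $w \in \Aa \leftrightarrow w - m_0 \in \Aa_0$ together with the identifications above. Property~(E3) is the bilinear form equation \eqref{BL1} rewritten in the $(m,u)$-variables. For uniqueness, let $(m_1,u_1)$ and $(m_2,u_2)$ be two weak solutions. Using (E2) for $(m_1,u_1)$ with $w = m_2 \in \Aa$ and for $(m_2,u_2)$ with $w = m_1 \in \Aa$, adding, and cancelling antisymmetric contributions, gives
\begin{equation*}
\int_{\Omega_T}[\Theta^{m_1,u_1} - \Theta^{m_2,u_2}](m_1 - m_2)\,\dx\dt \leq -\epsilon\int_{\Omega_T}\biggl[(m_1 - m_2)^2 + \sum_{|\beta|=2k}\bigl(\partial_{t,x}^\beta(m_1 - m_2)\bigr)^2\biggr]\,\dx\dt,
\end{equation*}
where now $\Theta^{m,u}$ denotes the bracket in (E2). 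From (E3), subtracting the two solutions and testing with $v = u_1 - u_2 \in \Bb_0$ (which vanishes at $t = T$) yields the analogous equality for the continuity-equation part. Integrating by parts the $\sigma$-contribution in the Fokker--Planck bracket and using $\sigma \geq 0$ together with the monotonicity of $D_pH$ (Remark~\ref{Hconv2}) shows that the left-hand side thus obtained differs from the pairing in Definition~\ref{DOPWS1} by a nonnegative quantity. Hence, summing the two displays gives $\langle F[(m_1,u_1)] - F[(m_2,u_2)], (m_1 - m_2, u_1 - u_2)\rangle$ on the left, up to a nonnegative correction, and a nonpositive quantity on the right; by Assumption~\ref{Hmono}, both sides vanish, and the resulting vanishing of the $\epsilon$-terms forces $m_1 = m_2$ and $u_1 = u_2$.

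The principal obstacle is the boundedness of $S$: the computation must reproduce the structure of Proposition~\ref{apriori1} while carrying the $\lambda$-factor on every nonlinear contribution, and one must verify that the resulting bound does not degenerate as $\lambda \to 0^+$ and that the $\epsilon$-regularization properly absorbs the linear lower-order $\lambda$-terms.
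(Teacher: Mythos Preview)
Your proposal is correct and follows essentially the same route as the paper: Schaefer's theorem applied to $A$ on $\widetilde\Aa_0\times\widetilde\Bb_0$, with the boundedness of the Schaefer set obtained by testing \eqref{VI1} at $w=0$ and \eqref{BL1} at $v=u_\lambda$, scaling by $\lambda$, and reproducing the a~priori computation of Propositions~\ref{apriori1}--\ref{apriWS1} together with Assumptions~\ref{gint}--\ref{AWC1} to get a $\lambda$-independent bound on the $\epsilon$-norms; the translation of the fixed point via $m=\mu+m_0$ and the uniqueness argument via (E2)--(E3) and Assumption~\ref{Hmono} also match. The only place where you are slightly less explicit than the paper is the lower bound on $\lambda\int_{\Omega_T} m_\lambda\,\widehat g(m_\lambda,\widehat h(\boldsymbol{m}_\lambda))\,\dx\dt$, which the paper handles by writing $m_\lambda\widehat g = (m_\lambda+m_0)g - m_0 g$ and invoking Assumption~\ref{gint} with $\delta=\tfrac{1}{2(1+\|m_0\|_\infty)}$ before Assumption~\ref{AWC1}; you should make sure this step is spelled out, since $m_\lambda$ itself need not be nonnegative.
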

%
%
%Proof
\begin{proof}
\textit{(Existence) }Fix $\lambda \in
[0,1]$, and let $(m_\lambda, u_\lambda)\in
\widetilde\Aa_0 \times \widetilde\Bb_0$
be such that
\begin{equation}\label{OLA1}
   \begin{bmatrix}
      m_\lambda \\
      u_\lambda 
    \end{bmatrix}
  =  \lambda A
    \begin{bmatrix}
      m_\lambda  \\
      u_\lambda
    \end{bmatrix}.
\end{equation}
If $\lambda=0$, then $(m_\lambda,u_\lambda)=(0,0)$.
Suppose that $0<\lambda \leq1$ and that there exists a pair $(m_\lambda, u_\lambda)$ satisfying \eqref{OLA1}; then,
because of the definition of \(A\), Proposition~\ref{EMVP1},
Corollary~\ref{PVI}, and Proposition~\ref{ESBP1},
we obtain \(\frac{m_\lambda}\lambda \in\Aa_0\),
\(\frac{u_\lambda}\lambda \in \Bb_0\),
and
\begin{equation*}
\begin{aligned}
&\int_0^T\!\!\!\int_{\Tt^d} \lambda \big( {u_\lambda}_t + \sum_{i,j=1}^da_{ij}(x){u_{\lambda}}_{x_ix_j}
- H(x,Du_\lambda) + \widehat g(m_\lambda,\widehat h(\boldsymbol{m}_{\lambda})) - V
\big)(\lambda w-m_\lambda) \, \dx\dt\\
&\quad+\int_0^T\!\!\!\int_{\Tt^d}  \epsilon\Big 
  (m_\lambda+\lambda\widehat\sigma +
  \lambda\sum_{|\beta|=2k}\partial_{t,x}^{2\beta}
  \widehat\sigma\Big)(\lambda w-m_\lambda)
  +
  \epsilon \sum_{|\beta|=2k}\partial_{t,x}^\beta
  m_\lambda (\lambda \partial_{t,x}^\beta w-\partial_{t,x}^\beta m_\lambda)\, \dx\dt
  \geq 0 
\end{aligned}
\end{equation*}
and
\begin{equation*}
\begin{aligned}
&\int_0^T\!\!\!\int_{\Tt^d} \Big[
   \lambda\Big({m_\lambda}_t - \sum_{i,j=1}^d\big(a_{ij}(x)(m_{\lambda}+\widehat\sigma)\big)_{x_ix_j} -\div\big((m_\lambda+\widehat\sigma )
D_p H(x,Du_\lambda)\big) \Big)v\,
\Big]\dx\dt\\ 
&\quad +\int_0^T\!\!\!\int_{\Tt^d}\bigg[
  \epsilon\Big( u_\lambda v + \sum_{|\beta|= 2k}
\partial_{t,x}^\beta u_\lambda\partial_{t,x}^\beta
v\Big)+\epsilon\lambda\Big(\xi +\sum_{|\beta|=2k}\partial_{t,x}^{2\beta}\xi\Big)v
\bigg]\, \dx\dt=0
\end{aligned}
\end{equation*}
for all \(w\in\Aa_0\) and \(v\in \Bb_0\).
Consequently, taking  \(w=0\)
and \(v= u_\lambda\) in these two conditions, and arguing as in  Proposition~\ref{apriWS1} using the Gagliardo--Nirenberg interpolation inequality  and the conditions \(m_\lambda + \widehat\sigma =m_\lambda +m_0+ \sigma   \geq0\),
 $m_{\lambda}(0,\cdot)=0$, and \(u_\lambda(T,\cdot)=0\), we get
\begin{equation}\label{E3.1}
\begin{aligned}
  &\int_0^T\!\!\!\int_{\Tt^d} \lambda \left[m_\lambda
\widehat g(m_\lambda, \widehat h(\boldsymbol{m}_\lambda))+ (m_\lambda+\widehat\sigma) |Du_\lambda|^\gamma \right]\,
\dx\dt
 \\&\quad+\epsilon \int_0^T\!\!\!\int_{\Tt^d} \Big[
m_\lambda^2 +u_\lambda^2 + \sum_{|\beta|=2k}(\partial_{t,x}^\beta
m_\lambda)^2+\sum_{|\beta|= 2k}(\partial_{t,x}^\beta u_{\lambda})^2
 \Big] \, \dx\dt
\leq C, 
\end{aligned}
\end{equation}
where $C$ is a positive constant independent
of $\lambda$. 

Next, we observe that because \(m_\lambda + m_0 \geq0\), we can
use  Assumptions~\ref{gint} and \ref{AWC1} with \(\delta
= \frac{1}{2(1+\Vert m_0\Vert_\infty)}\) to conclude that
\begin{equation*}
\begin{aligned}
&\int_0^T\!\!\!\int_{\Tt^d} m_\lambda
\widehat g(m_\lambda, \widehat h(\boldsymbol{m}_\lambda)) \, \dx\dt\\ = &\int_0^T\!\!\!\int_{\Tt^d} (m_\lambda + m_0)
 g(m_\lambda + m_0,  h(\boldsymbol{m}_\lambda
+ \boldsymbol{m}_0)) \,
\dx\dt
 - \int_0^T\!\!\!\int_{\Tt^d} m_0 
 g(m_\lambda + m_0,  h(\boldsymbol{m}_\lambda
+ \boldsymbol{m}_0)) \,
\dx\dt\\
\geq &\,\frac12  \int_0^T\!\!\!\int_{\Tt^d} (m_\lambda + m_0)
 g(m_\lambda + m_0,  h(\boldsymbol{m}_\lambda
+ \boldsymbol{m}_0)) \,
\dx\dt - C_\delta\Vert m_0\Vert_\infty \geq \frac{C}{2} - C_\delta\Vert m_0\Vert_\infty.
  \end{aligned}
\end{equation*}
This estimate, \eqref{E3.1}, and the condition \(m_\lambda + \widehat\sigma
\geq0\) yield

\begin{equation*}
\begin{aligned}
\epsilon \int_0^T\!\!\!\int_{\Tt^d} \Big[
m_\lambda^2+u_\lambda^2 +\sum_{|\beta|=2k}(\partial_{t,x}^\beta
m_\lambda)^2 + \sum_{|\beta|= 2k}(\partial_{t,x}^\beta u_{\lambda})^2
 \Big] \, \dx\dt
\leq C,
\end{aligned}
\end{equation*}
where $C$ is another constant independent of $\lambda$.
Invoking the Gagliardo--Nirenberg interpolation
inequality,  we verify that $(m_\lambda,
u_\lambda)$ is uniformly bounded in
$H^{2k}(\Omega_T) \times H^{2k}(\Omega_T)$
with respect to $\lambda$. From this fact and Proposition~\ref{ACC}, we can use Theorem~\ref{thm:SFPT} and conclude
that \(A \) has a fixed point, \((\widetilde m,u)\in
\widetilde \Aa_0 \times \widetilde\Bb_0\). 

Let $\bar m:=\widetilde m+m_0$. By the definition
of \(A\),  
Proposition~\ref{EMVP1},
Proposition~\ref{PVI}, and Proposition~\ref{ESBP1},
we conclude that \((\bar m,u) \in \Aa \times
\Bb_0\) and
\begin{itemize}
\item[(i)] \eqref{VI1} holds with \((m_1,u_1)\)
replaced by \((\widetilde m, u)\) and \(m\)
replaced by \(\widetilde m\),

\item[(ii)] \eqref{BL1} holds with \(f_1\)
replaced by \(f_{(\widetilde m, u)}\).
\end{itemize} 
Recalling that \(\widehat \sigma = \sigma+m_0\)
and \(\widehat g(\widetilde m,\widehat
h(\boldsymbol{\widetilde m})) = g(\widetilde m + m_0,
h(\boldsymbol{\widetilde m} +\boldsymbol{ m_0})) =  g(\bar m,
h(\boldsymbol{\bar m}))\), condition
(i) becomes
\begin{equation*}
\begin{aligned}
&
 \int_0^T\!\!\!\int_{\Tt^d} \bigg( 
 u_t + \sum_{i,j=1}^da_{ij}(x)u_{x_ix_j}
- H(x,Du) +g(\bar m,
h(\boldsymbol{\bar m}))+
V(t,x) \bigg)(w+m_0-\bar m) \, \dx\dt \\ 
&\quad +\int_0^T\!\!\!\int_{\Tt^d} \bigg[
\epsilon\bigg (\bar m+\sigma+\sum_{|\beta|=2k}
\partial^{2\beta}_{t,x} \sigma\bigg)(w+m_0-\bar
m)
+\epsilon \sum_{|\beta|=2k}\partial_{t,x}^\beta
\bar m\big(\partial_{t,x}^\beta (w+m_0)-\partial_{t,x}^\beta
\bar m\big)\bigg]\, \dx\dt \geq 0
\end{aligned}
\end{equation*}
for all \(w\in \Aa_0\). Observing that
if \(\bar w\in \Aa\), then \(w:= \bar
w - m_0\in \Aa_0\), we conclude from
the previous estimate  that  condition 
(E2) in Definition~\ref{DMPWS1} holds
for \((\bar m, u)\). Moreover, condition
(ii) above is equivalent to condition
(E3) in Definition~\ref{DMPWS1} 
for \((\bar m, u)\). Consequently,  
$(\bar m,u)$ belongs to $  H^{2k}(\Omega_T)
\times H^{2k}(\Omega_T)$ and satisfies
(E1)--(E3) in Definition~\ref{DMPWS1}
 with \(u_T=0\).
\smallskip

\textit{(Uniqueness) }
Suppose that   $(m_1,u_1)$ and $(m_2,u_2)$
two weak solutions to Problem~\ref{MP} with $u_T=0$ in
the sense of Definition~\ref{DMPWS1}.
Choosing $w=m_2$ for \((u_1,m_1)\) and $w=m_1$ for \((u_2,m_2)\) in (E2) of Definition~\ref{DMPWS1}, and then adding the resulting inequalities, we have
\begin{equation}\label{ineqUni-1}
\begin{aligned}
&\int_0^T\!\!\!\int_{\Tt^d}\Big[ - ( {u_1}_t-{u_2}_t ) - \sum_{i,j=1}^da_{ij}(x)( {u_{1}}_{x_ix_j}-{u_{2}}_{x_ix_j}) + H(x,Du_1)-H(x,Du_2)\\ 
&\quad- ( g(m_1, h(\boldsymbol{m}_1)) -  g(m_2, h(\boldsymbol{m}_2)) ) \Big](m_1-m_2)\,\dx\dt\\
&\quad - \int_0^T\!\!\!\int_{\Tt^d} \Big[\epsilon(m_1-m_2)^2+\epsilon\sum_{|\beta|=2k}(\partial_{t,x}^{\beta}m_1 - \partial_{t,x}^{\beta} m_2)^2\Big]\, \dx\dt
\geq0.
\end{aligned}
\end{equation}
Because $u_1-u_2\in \Bb_0$, setting $v=u_1-u_2$ in (E3) of Definition~\ref{DMPWS1}
for  \((u_1,m_1)\) and \((u_2,m_2)\), and then subtracting the resulting equalities, we have
\begin{equation}\label{ineqUni-2}
\begin{aligned}
&\int_0^T\!\!\!\int_{\Tt^d}\Big({m_1}_t-{m_2}_t-\sum_{i,j=1}^d\big(a_{ij}(x)(m_1-m_2)\big)_{x_ix_j}\\
&\quad
-\div\big((m_1+ \sigma) D_pH(x,Du_1)
- (m_2+\sigma) D_pH(x,Du_2)\big)\Big)(u_1-u_2)\,\dx\dt\\
&\quad
+\epsilon\int_0^T\!\!\!\int_{\Tt^d}
\Big((u_1 - u_2)^2 + \sum_{|\beta|=2k}(\partial_{t,x}^\beta u_1-\partial_{t,x}^\beta u_2)^2\Big)\, \dx\dt=0.
\end{aligned}
\end{equation}
Subtracting \eqref{ineqUni-1} from \eqref{ineqUni-2},
 we obtain
\begin{equation*}
\begin{aligned}
0\geq &
\int_0^T\!\!\!\int_{\Tt^d}\Big[
\epsilon(m_2-m_2)^2+\epsi(u_1 - u_2)^2+\epsilon\sum_{|\beta|=2k}(\partial_{t,x}^\beta m_2-\partial_{t,x}^\beta m_2)^2
+\epsilon\sum_{|\beta|=2k}(\partial_{t,x}^\beta u_1-\partial_{t,x}^\beta u_2)^2
\Big]\,\dx\dt\\
& + \int_0^T\!\!\!\int_{\Tt^d} \sigma(D_p H(x, Du_1)-D_p H(x,Du_2))\cdot (Du_1-Du_2) \,\dx\dt\\
&+\left\langle
   F
  \begin{bmatrix}
      m_2  \\
      u_2 
  \end{bmatrix}
  -
   F
  \begin{bmatrix}
      m_1  \\
      u_1 
  \end{bmatrix},
  \begin{bmatrix}
      m_2  \\
      u_2
  \end{bmatrix}
  -
  \begin{bmatrix}
      m_1  \\
      u_1
  \end{bmatrix}
\right\rangle
\geq 0
\end{aligned}
\end{equation*}
because each of the three terms in preceding
sum is nonnegative by Remark~\ref{Hconv2}, the positivity
of \(\sigma\), and  Assumption~\ref{Hmono}.  Then, each of these three terms must
 be equal to zero,  from which we conclude that $(m_1,u_2)=(m_2,u_2)$.
\end{proof}

%Proof of Main Theorem
\begin{proof}[Proof of Theorem~\ref{MT}] 
For \((t,x) \in\Omega_T\) and \(p\in\Rr^d\), define $\widehat H(x,p):=H(x,p+Du_T(x))$, $\widehat{V}(t,x):= V(t,x) + \sum_{i,j=1}^da_{ij}(x)
{u_{T}}_{x_ix_j}(x)$,  and 
$\widehat{\xi} (t,x):= \xi(t,x) +u_T(x)$.

Note that Assumptions~\ref{Hconv}--\ref{AWC1} and \ref{Hmono} also hold with \(H\), \(V\), and \(\xi\) replaced by
\(\widehat H\), \(\widehat
V\), and \(\widehat \xi\), respectively, for the same \(\gamma>1\) and possibly
different constants. 
Moreover, \((u,m)\in H^{2k}(\Omega_T)\times H^{2k}(\Omega_T)\) satisfies (E1)--(E3)
if and only if \((m,\bar u):=(m,u -u_T)\in H^{2k}(\Omega_T)
\times H^{2k}(\Omega_T)\) satisfies (E1)--(E3) with \(u_T=0\) and with \(H\), \(V\),  and \(\xi\) replaced by \(\widehat H\),  \(\widehat V\), and \(\widehat \xi\), respectively. 

To conclude the proof, we use Proposition~\ref{ExiUniS1} that shows that there exists a unique pair  \((m,\hat u)\in H^{2k}(\Omega_T)
\times H^{2k}(\Omega_T)\) satisfying (E1)--(E3) with \(u_T=0\) and
 with \(H\), \(V\),  and \(\xi\) replaced by \(\widehat H\),
 \(\widehat V\), and \(\widehat \xi\), respectively.  
\end{proof}

%7 Existence of weak solutions to the First order MFG
\section{Proof of Theorem~\ref{TOP}}\label{PfMT2}
To prove Theorem~\ref{TOP}, we begin by investigating the compactness, with respect to \(\epsi\), of weak solutions to Problem~\ref{MP}. Then, we define a linear functional, $F_\epsilon$, associated with \eqref{HighdRP}, and we show that $F_\epsilon$ is monotone. Next, by Minty's method, we prove Theorem~\ref{TOP}.
Moreover, we study consistency of weak solutions. In particular, if a weak solution $(m,u)$ has enough regularity and $m\mapsto g(m,h(\boldsymbol{m}))$ is strictly monotone with respect to the $ L^2$-inner product, then we show that the weak solution is the unique classical solution to Problem~\ref{OP}.

Set $\sigma\equiv 0$ and $\xi\equiv 0$. Let $(m_\epsilon, u_\epsilon)$ be the weak solution given by Theorem~\ref{MT}. Then, we define $\big<u_\epsilon\big>:t\to \Rr$ and $\tilde u_\epsilon:\Omega_T\to \Rr$ by
$\big<u_\epsilon\big>:=\int_{\Tt^d}u_\epsilon(t,x)\,\dx$ and
\begin{equation}
\tilde u_\epsilon(t,x):=u_\epsilon(t,x)-\big<u_\epsilon\big>(t).\label{Defutild} 
\end{equation}
The next lemma addresses the weak convergence of $(m_\epsilon,\tilde u_\epsilon)$ in $L^1(\Omega_T)\times L^\gamma((0,T);W^{1,\gamma}(\Tt^d))$.
\begin{lem}\label{WCWS1}
Consider Problem~\ref{MP} with $\sigma= 0$ and $\xi= 0$, and suppose that Assumptions~\ref{Hconv}--\ref{Hmono}  hold for some \(\gamma>1\). Let $(m_\epsilon, u_\epsilon) \in H^{2k}(\Omega_T) \times H^{2k}(\Omega_T)$ be the unique weak solution to Problem~\ref{MP} and let $\tilde u_\epsilon$ be given by \eqref{Defutild}. Then, there exists $(m,\tilde u)\in L^1(\Omega_T) \times L^\gamma((0,T);W^{1,\gamma}(\Tt^d))$ such that $m \geq 0$
%, $u(T,x)=u_T(x)$ on $\Tt^d$ in the sense of traces, 
and $(m_\epsilon,\tilde u_\epsilon)$ converges to $(m,\tilde u)$ weakly in $L^1(\Omega_T)\times L^\gamma((0,T);W^{1,\gamma}(\Tt^d))$ as $\epsilon \to 0$, extracting a subsequence if necessary. 
\end{lem}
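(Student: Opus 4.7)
The plan is to combine the $\epsilon$-uniform a priori estimates from Section~\ref{Prows} with Assumption~\ref{gwc} for the density and a Poincaré--Wirtinger argument on the torus for the value function.

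First, I would apply Corollary~\ref{apriDu1} with $\sigma\equiv 0$ to obtain $\|Du_\epsilon\|_{L^\gamma(\Omega_T)}\leq C$ uniformly in $\epsilon$. The normalization $\int_{\Tt^d}\tilde u_\epsilon(t,\cdot)\,\dx=0$ built into \eqref{Defutild} then allows the Poincaré--Wirtinger inequality on $\Tt^d$ to transfer this gradient bound to a uniform bound for $\tilde u_\epsilon(t,\cdot)$ in $L^\gamma(\Tt^d)$ for a.e.\ $t\in(0,T)$; integrating in time yields a uniform bound for $\tilde u_\epsilon$ in $L^\gamma((0,T);W^{1,\gamma}(\Tt^d))$. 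Since $\gamma>1$ this space is reflexive, so the Banach--Alaoglu theorem supplies a subsequence along which $\tilde u_\epsilon\rightharpoonup \tilde u$ weakly in $L^\gamma((0,T);W^{1,\gamma}(\Tt^d))$.

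For the density, I would feed the $L^\gamma$-bound on $Du_\epsilon$ (and hence the $L^1$-bound) back into Proposition~\ref{apriWS1} (applied with $\sigma\equiv 0$) to conclude that
\[\int_0^T\!\!\!\int_{\Tt^d} m_\epsilon\, g(m_\epsilon,h(\boldsymbol{m}_\epsilon))\,\dx\dt \leq C\bigl(1+\|Du_\epsilon\|_{L^1(\Omega_T)}\bigr) \leq C,\]
uniformly in $\epsilon$. Assumption~\ref{gwc} then applies directly to the nonnegative sequence $\{m_\epsilon\}$ and produces a further subsequence converging weakly in $L^1(\Omega_T)$ to some $m$. The nonnegativity $m\geq 0$ a.e.\ in $\Omega_T$ is preserved by weak $L^1$-convergence, since $\int m\,\varphi\,\dx\dt = \lim_\epsilon \int m_\epsilon\,\varphi\,\dx\dt\geq 0$ for every nonnegative $\varphi\in L^\infty(\Omega_T)$.

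Passing to a common subsequence realizing both weak limits completes the argument. I do not expect any serious obstacle: the lemma essentially bundles together Corollary~\ref{apriDu1}, the $L^1$-weak compactness guaranteed by Assumption~\ref{gwc}, and a Poincaré--Wirtinger estimate on $\Tt^d$. The only notational subtlety is that no estimate is available on the spatial averages $\langle u_\epsilon\rangle(t)$, so one cannot expect weak compactness for $u_\epsilon$ itself; this is precisely the reason the mean-subtracted quantity $\tilde u_\epsilon$ introduced in \eqref{Defutild} is the correct object to analyze in the limit $\epsilon\to 0$.
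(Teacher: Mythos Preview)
Your proposal is correct and follows essentially the same route as the paper: Corollary~\ref{apriDu1} plus Poincar\'e--Wirtinger and reflexivity for $\tilde u_\epsilon$, then Proposition~\ref{apriWS1} (discarding the remaining nonnegative terms on the left) combined with Assumption~\ref{gwc} for $m_\epsilon$, with nonnegativity of the limit inherited from weak $L^1$-convergence. Your closing remark about why one must work with $\tilde u_\epsilon$ rather than $u_\epsilon$ is exactly the point.
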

\begin{proof}
Because $\big<u_\epsilon\big>(t)=\int_{\Tt^d} \tilde u_\epsilon(t,x)\,\dx=0$ for a.e.\,$t\in(0,T)$ and $\partial_x^\alpha\tilde u_\epsilon=\partial_x^\alpha u_\epsilon$, from Corollary~\ref{apriDu1} and Poincar\'{e}--Wirtinger inequality, we have
\begin{equation*}
\|\tilde u_\epsilon\|_{L^\gamma(\Omega_T)}^\gamma
=\int_0^T\!\!\!\int_{\Tt^d}|\tilde u_\epsilon -\big<u_\epsilon\big>|^\gamma\,\dx\dt
\leq
C \|D\tilde u_\epsilon\|_{L^\gamma(\Omega_T)}^\gamma
\leq C,
\end{equation*}
where $C$ is independent of $\epsilon$. Because $L^\gamma((0,T);W^{1,\gamma}(\Tt^d))$ is a reflexive Banach space, there exist a subsequence $\{\tilde u_{\epsilon_j}\}_{j=1}^\infty$ and $\tilde u\in L^\gamma((0,T);W^{1,\gamma}(\Tt^d))$ such that $\tilde u_{\epsilon_j}\rightharpoonup \tilde u$ in 
$L^\gamma((0,T);W^{1,\gamma}(\Tt^d))$.
%from the fact %that  $u_\epsilon(T,x)=u_T(x)$ on $\Tt^d$ in the sense of traces and 
%that, by Corollary~\ref{apriDu1}, $u_\epsilon$ is uniformly bounded in $L^2((0,T);W^{1,\gamma}(\Tt^d)) $ with respect to \(\epsilon\).

On the other hand, using Proposition~\ref{apriWS1}
and the positivity
of \(m_\epsilon\) and \(m_0\), we get
\begin{equation*}%\label{B-mgm}
\sup_{\epsilon\in(0,1)}
\int_0^T\!\!\!\int_{\Tt^d} m_\epsilon g(m_\epsilon,h(\boldsymbol{m}_\epsilon))\, \dx\dt <\infty.
\end{equation*}
Therefore, from Assumption~\ref{gwc}, there exists $m\in L^1(\Omega_T)$ such that $m_\epsilon \rightharpoonup m$ in $L^1(\Omega_T)$ as $\epsilon \to 0$, extracting a subsequence if necessary. Since $m_\epsilon\geq0$, we conclude that $m\geq 0$.
\end{proof}

%Let $\eta$, $v$ be in $C^{\infty}([0,1])$ such that $\eta \geq 0$ and $v(0) =0$, $v(1)=u_2$. 
Fix $(\eta, v) \in H^{2k}(\Omega_T) \times H^{2k}(\Omega_T)$, let \(F[\eta,v]\) be the functional introduced in \eqref{DRF2},  and let 
 $F_{\epsilon} [\eta,v]: H^{2k}(\Omega_T) \times H^{2k}(\Omega_T) \to \Rr$ be the linear functional given by
\begin{equation}\label{DRF1}
\begin{aligned}
\left\langle
   F_\epsilon
  \begin{bmatrix}
      \eta \\
      v
  \end{bmatrix},
  \begin{bmatrix}
      w_1 \\
      w_2
  \end{bmatrix}
\right\rangle
:= & \left\langle
   F
  \begin{bmatrix}
      \eta \\
      v
  \end{bmatrix},
  \begin{bmatrix}
      w_1 \\
      w_2
  \end{bmatrix}
\right\rangle +\epsi\int_0^T\!\!\!\int_{\Tt^d}\Big( \eta w_1 +  \sum_{|\beta|=2k}\partial_{t,x}^\beta \eta \partial_{t,x}^\beta w_1 \Big)\,\dx\dt
\\
& + \epsilon\int_0^T\!\!\!\int_{\Tt^d} \Big( v w_2 + \sum_{|\beta|=2k}\partial_{t,x}^\beta v\partial_{t,x}^\beta w_2 \Big)\,\dx\dt.
\end{aligned}
\end{equation}
Next, we prove the monotonicity of $F_\epsilon$ over $\Aa\times \Bb$, where \(\Aa\) and 
$\Bb$ are given by \eqref{DAa} and \eqref{DBb}.

%
%
%average of m in space variable
\begin{lem}\label{PAm}
Consider Problem~\ref{MP} with $\sigma= 0$ and $\xi= 0$, and suppose that Assumptions~\ref{Hconv}--\ref{Hmono}  hold for some \(\gamma>1\). Let $(m_\epsilon, u_\epsilon) \in H^{2k}(\Omega_T) \times H^{2k}(\Omega_T)$ be the unique weak solution to Problem~\ref{MP} and let $m$ be given by Lemma~\ref{WCWS1}. Then, we have
\begin{equation*}
\frac{d}{dt}\left(\int_{\Tt^d}m(t,x)\,\dx\right)=0 \ \ \mbox{in the sense of distributions};
\end{equation*}
that is, for $v\in C_c^\infty(0,T)$,
\begin{equation}\label{Dst1}
\int_0^Tv'(t)\bigg(\int_{\Tt^d}m(t,x)\,\dx\bigg)\dt=0.
\end{equation}
\end{lem}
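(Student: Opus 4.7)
My plan is to choose a test function in (E3) of Definition~\ref{DMPWS1} that depends only on $t$, apply this to the fixed-$\epsi$ weak solution $(m_\epsi,u_\epsi)$, and then pass to the limit $\epsi\to 0$ using Lemma~\ref{WCWS1} and Corollary~\ref{aprisqrt}.

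Concretely, fix $v\in C_c^\infty(0,T)$ and define $\psi:\Omega_T\to\Rr$ by $\psi(t,x):=v(t)$. Since $v\in C_c^\infty(0,T)$, we have $\psi\in H^{2k}(\Omega_T)$ with $\psi(T,\cdot)=0$, hence $\psi\in\Bb_0$. Substituting $\psi$ into (E3) (with $\sigma\equiv 0$ and $\xi\equiv 0$) gives
\begin{equation*}
\begin{aligned}
&\int_0^T\!\!\!\int_{\Tt^d} \Big({m_\epsi}_t - \sum_{i,j=1}^d(a_{ij}(x)m_\epsi)_{x_ix_j} - \div(m_\epsi D_pH(x,Du_\epsi))\Big) v(t)\,\dx\dt\\
&\quad + \epsi\int_0^T\!\!\!\int_{\Tt^d} u_\epsi v\,\dx\dt + \epsi\sum_{|\beta|=2k}\int_0^T\!\!\!\int_{\Tt^d}\partial_{t,x}^\beta u_\epsi\,\partial_{t,x}^\beta\psi\,\dx\dt=0.
\end{aligned}
\end{equation*}
Because $\psi$ is independent of $x$, integration over $\Tt^d$ kills the terms in divergence form by periodicity: $\int_{\Tt^d}(a_{ij}(x)m_\epsi)_{x_ix_j}\,\dx=0$ and $\int_{\Tt^d}\div(m_\epsi D_pH(x,Du_\epsi))\,\dx=0$. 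Moreover, since $\psi$ depends only on $t$, the only nonvanishing contribution to $\sum_{|\beta|=2k}\partial_{t,x}^\beta\psi$ is $\partial_t^{2k}\psi=v^{(2k)}(t)$.

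For the time derivative term, Fubini and integration by parts in $t$ (using $v(0)=v(T)=0$) yield
\begin{equation*}
\int_0^T\!\!\!\int_{\Tt^d}{m_\epsi}_t\, v(t)\,\dx\dt = -\int_0^T v'(t)\Big(\int_{\Tt^d}m_\epsi(t,x)\,\dx\Big)\dt.
\end{equation*}
For the two $\epsi$-penalty terms I use Corollary~\ref{aprisqrt}, which gives $\sqrt{\epsi}\,\Vert u_\epsi\Vert_{H^{2k}(\Omega_T)}\leq C$ independently of $\epsi$. By Cauchy--Schwarz,
\begin{equation*}
\Big|\epsi\int_0^T\!\!\!\int_{\Tt^d} u_\epsi v\,\dx\dt\Big|\leq \sqrt{\epsi}\,\Vert\sqrt{\epsi}u_\epsi\Vert_{L^2(\Omega_T)}\Vert v\Vert_{L^2(\Omega_T)}\leq C\sqrt{\epsi},
\end{equation*}
and analogously $\bigl|\epsi\int\partial_t^{2k}u_\epsi\,v^{(2k)}\,\dx\dt\bigr|\leq C\sqrt{\epsi}\,\Vert v^{(2k)}\Vert_{L^2(0,T)}$. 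Both $O(\sqrt{\epsi})$ remainders vanish as $\epsi\to 0$.

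Combining the above, for every $\epsi\in(0,1)$,
\begin{equation*}
-\int_0^T v'(t)\Big(\int_{\Tt^d}m_\epsi(t,x)\,\dx\Big)\dt = O(\sqrt{\epsi}).
\end{equation*}
Finally I pass to the limit in the left-hand side using Lemma~\ref{WCWS1}: since $m_\epsi\rightharpoonup m$ in $L^1(\Omega_T)$ and $(t,x)\mapsto v'(t)\in L^\infty(\Omega_T)$, the weak $L^1$--$L^\infty$ duality gives $\int_{\Omega_T}v'(t)m_\epsi\,\dx\dt\to\int_{\Omega_T}v'(t)m\,\dx\dt=\int_0^T v'(t)\bigl(\int_{\Tt^d}m(t,x)\,\dx\bigr)\dt$. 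This yields \eqref{Dst1}. I do not anticipate any real obstacle here; the only subtlety is recognizing that the constant-in-$x$ test function is admissible in $\Bb_0$ and that Corollary~\ref{aprisqrt} controls both regularization terms uniformly in $\epsi$.
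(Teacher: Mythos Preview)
Your proof is correct and follows essentially the same route as the paper: test (E3) with a function depending only on $t$, observe that the divergence-form terms vanish by periodicity, integrate the $m_\epsi$ term by parts in $t$, use Corollary~\ref{aprisqrt} to kill the $\epsi$-regularization terms, and pass to the limit via the weak $L^1$ convergence from Lemma~\ref{WCWS1}. You simply spell out the estimates in slightly more detail than the paper does.
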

%
%proof
\begin{proof}
Fix $v\in C_c^{\infty}(0,T)$. Because $v \in \Bb_0$ and $v$ is independent of the space variables, from (E3) with $\sigma=0$ and $\xi= 0$, and using  integration by parts, we have
\begin{equation*}
-\int_0^T\!\!\!\int_{\Tt^d} m_\epsilon v_t \,\dx\dt
+
\epsilon\int_0^T\!\!\!\int_{\Tt^d}  \Big( u_\epsi v + \sum_{|\beta|=2k}\partial_{t,x}^\beta
u_\epsi \partial_{t,x}^\beta v \Big)\,\dx\dt
=0.
\end{equation*}
Thus, by Corollary~\ref{aprisqrt} and Lemma~\ref{WCWS1}, as $\epsilon\to 0$, we have that \eqref{Dst1} holds.
\end{proof}

%
%
% Monotonicity of the regularized mapping F_\epsilon
\begin{lem}\label{MORF}
Let $H$, $g$, $h$, $V$, and \(\{a_{ij}\}_{i,j=1}^d\) be as in Problem~\ref{MP}, let $F_\epsilon$ be given by \eqref{DRF1}, and suppose that Assumption~\ref{Hmono} holds. 
Then, for any $(\eta_1, v_1)$, $(\eta_2,  v_2) \in \Aa\times 
\Bb$, we have
\begin{equation*}
\left\langle
  F_{\epsilon}
  \begin{bmatrix}
      \eta_1  \\
      v_1 
  \end{bmatrix}
     -
      F_{\epsilon}
      \begin{bmatrix}
          \eta_2  \\
          v_2 
      \end{bmatrix},
      \begin{bmatrix}
          \eta_1  \\
          v_1
      \end{bmatrix}
    -
   \begin{bmatrix}
         \eta_2  \\
         v_2
  \end{bmatrix}
\right\rangle
\geq 0.
\end{equation*}
\end{lem}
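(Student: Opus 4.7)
The plan is to exploit the decomposition built into the definition of $F_\epsilon$: it is exactly $F$ plus an explicit quadratic perturbation in $(\eta,v)$. Monotonicity of $F$ over $\Aa\times\Bb$ is given by Assumption~\ref{Hmono}, so all that remains is to check that the perturbation contributes a non-negative quadratic form.

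Concretely, fix $(\eta_1,v_1),(\eta_2,v_2)\in\Aa\times\Bb$ and set $\bar\eta:=\eta_1-\eta_2$, $\bar v:=v_1-v_2$. From the definition \eqref{DRF1}, by bilinearity in the second slot and by the fact that the $\epsilon$-terms depend linearly on $(\eta,v)$ (hence their difference with respect to the first slot is linear in $(\bar\eta,\bar v)$), we get
\begin{equation*}
\begin{aligned}
&\left\langle F_\epsilon\begin{bmatrix}\eta_1\\v_1\end{bmatrix}-F_\epsilon\begin{bmatrix}\eta_2\\v_2\end{bmatrix},\begin{bmatrix}\bar\eta\\\bar v\end{bmatrix}\right\rangle
=\left\langle F\begin{bmatrix}\eta_1\\v_1\end{bmatrix}-F\begin{bmatrix}\eta_2\\v_2\end{bmatrix},\begin{bmatrix}\bar\eta\\\bar v\end{bmatrix}\right\rangle\\
&\qquad+\epsilon\int_0^T\!\!\!\int_{\Tt^d}\Big(\bar\eta^{\,2}+\sum_{|\beta|=2k}(\partial_{t,x}^\beta\bar\eta)^2\Big)\,\dx\dt
+\epsilon\int_0^T\!\!\!\int_{\Tt^d}\Big(\bar v^{\,2}+\sum_{|\beta|=2k}(\partial_{t,x}^\beta\bar v)^2\Big)\,\dx\dt.
\end{aligned}
\end{equation*}

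The first term on the right-hand side is non-negative by Assumption~\ref{Hmono}. The last two terms are manifestly non-negative, being integrals of sums of squares. Adding the three non-negative contributions yields the desired inequality.

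The main (and really only) point to handle carefully is the algebraic step of isolating the perturbation: the regularization terms in \eqref{DRF1} are linear in $(\eta,v)$ in the first slot (no products of $\eta$ and $v$ appear), so subtracting the functionals at $(\eta_1,v_1)$ and $(\eta_2,v_2)$ and pairing with $(\bar\eta,\bar v)$ produces exactly the quadratic form in $(\bar\eta,\bar v)$ written above. No obstacle beyond bookkeeping is expected; in particular, no integration by parts and no boundary conditions are needed here, so the statement in fact holds without restricting $(\eta_i,v_i)$ to $\Aa\times\Bb$ beyond what is required to make sense of Assumption~\ref{Hmono}.
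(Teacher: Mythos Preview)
Your proof is correct and follows essentially the same route as the paper: decompose $F_\epsilon=F+\text{(quadratic regularization)}$, invoke Assumption~\ref{Hmono} for the $F$-part, and observe that the regularization contributes a non-negative sum of squares. The paper's proof additionally mentions ``integration by parts'' and notes $v_1-v_2\in\Bb_0$, but as you correctly point out, neither is actually needed here since the $\epsilon$-terms in \eqref{DRF1} are already written in symmetric bilinear form.
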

%
%Proof
\begin{proof}
Let $(\eta_1, v_1)$, $(\eta_2,  v_2) \in \Aa\times 
\Bb$. Then, $v_1 - v_2 \in \Bb_0$. Hence, from Assumption~\ref{Hmono} and integration by parts, we have
\begin{align*}
%\begin{aligned}
&\left\langle
  F_{\epsilon}
  \begin{bmatrix}
      \eta_1  \\
      v_1 
  \end{bmatrix}
  -
  F_{\epsilon}
  \begin{bmatrix}
      \eta_2  \\
      v_2 
  \end{bmatrix},
  \begin{bmatrix}
      \eta_1  \\
      v_1
  \end{bmatrix}
  -
  \begin{bmatrix}
      \eta_2  \\
      v_2
  \end{bmatrix}
\right\rangle\\
&\quad\geq\int_0^T\!\!\!\int_{\Tt^d}
\epsilon\Big[(\eta_1-\eta_2)^2 + \sum_{|\beta|=2k}(\partial_{t,x}^\beta \eta_1-\partial_{t,x}^\beta \eta_2)^2  + (v_1 - v_2)^2+ \sum_{|\beta|= 2k}(\partial_{t,x}^\beta v_1-\partial_{t,x}^\beta v_2)^2\Big]\, \dx\dt
\geq 0. \qedhere
%\end{aligned}
\end{align*}
\end{proof}
%
%Monotonicity over \hat Aa
\begin{remark}\label{rmmono1}
Let $\widehat \Aa$ be the set in  \eqref{DAawh}. Then, Lemma~\ref{MORF} still holds for $(\eta_1,v_1), (\eta_2,v_2)\in\widehat \Aa\times \Bb$.
\end{remark}

%tilde u satisfies the same inequality
\begin{lem}\label{Equtu}
Let $H$, $g$, $h$, $V$, and \(\{a_{ij}\}_{i,j=1}^d\) be  as in Problem~\ref{MP}, let $F_\epsilon$ be given by \eqref{DRF1}, and suppose that Assumptions \ref{Hconv}--\ref{AWC1}
and \ref{Hmono} hold.
Let $(m_\epsilon,u_\epsilon)\in H^{2k}(\Omega_T)\times H^{2k}(\Omega_T)$ be the unique weak solution to Problem~\ref{MP} and let $\tilde u_\epsilon$ be as in \eqref{Defutild}.
Then, for $(\eta, v)\in \widehat\Aa\times \Bb$, we have
\begin{equation*}
\begin{aligned}
\left\langle
  F
  \begin{bmatrix}
      \eta \\
      v
  \end{bmatrix},
  \begin{bmatrix}
      \eta  \\
      v 
  \end{bmatrix}
  -
  \begin{bmatrix}
      m_\epsilon \\
      u_\epsilon
  \end{bmatrix}
\right\rangle 
=
\left\langle
  F
  \begin{bmatrix}
      \eta \\
      v
  \end{bmatrix},
  \begin{bmatrix}
      \eta  \\
      v 
  \end{bmatrix}
  -
  \begin{bmatrix}
      m_\epsilon \\
      \tilde u_\epsilon
  \end{bmatrix}
\right\rangle .
\end{aligned}
\end{equation*}
\end{lem}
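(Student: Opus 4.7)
\medskip

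\noindent\textbf{Proof proposal for Lemma~\ref{Equtu}.} The strategy is to show that the difference between the two sides vanishes identically by using the structure of the Fokker--Planck term in $F$ together with the fact that $\eta\in\widehat\Aa$ has unit mass at almost every time. Since $F$ is linear in its second argument, the difference of the two expressions equals
\begin{equation*}
\left\langle F\begin{bmatrix} \eta \\ v \end{bmatrix}, \begin{bmatrix} 0 \\ \tilde u_\epsilon - u_\epsilon \end{bmatrix} \right\rangle = -\left\langle F\begin{bmatrix} \eta \\ v \end{bmatrix}, \begin{bmatrix} 0 \\ \langle u_\epsilon\rangle \end{bmatrix} \right\rangle,
\end{equation*}
because $\tilde u_\epsilon - u_\epsilon = -\langle u_\epsilon\rangle(t)$ by \eqref{Defutild}. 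The first component being zero kills the Hamilton--Jacobi contribution in \eqref{DRF2}, so only the Fokker--Planck integral survives.

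Next, I would exploit the fact that $\langle u_\epsilon\rangle$ depends only on $t$ to pull it out of the spatial integral, reducing the expression to
\begin{equation*}
-\int_0^T \langle u_\epsilon\rangle(t) \int_{\Tt^d}\bigg(\eta_t - \sum_{i,j=1}^d (a_{ij}(x)\eta)_{x_ix_j} - \div\big(\eta D_pH(x,Dv)\big)\bigg)\dx\,\dt.
\end{equation*}
By the periodicity of $\Tt^d$, the divergence term and the pure second-derivative term $\sum_{i,j}(a_{ij}\eta)_{x_ix_j}$ integrate to zero in $x$. Moreover, since $\eta\in H^{2k}(\Omega_T)$ with $2k$ large, the order of $\partial_t$ and $\int_{\Tt^d}dx$ may be interchanged, yielding $\int_{\Tt^d}\eta_t\,\dx = \tfrac{d}{dt}\int_{\Tt^d}\eta\,\dx$.

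The final step is to invoke the defining property of $\widehat\Aa$ in \eqref{DAawh}: for $\eta\in\widehat\Aa$, we have $\int_{\Tt^d}\eta(t,x)\,\dx = 1$ for a.e.\ $t\in[0,T]$, so $\tfrac{d}{dt}\int_{\Tt^d}\eta\,\dx = 0$. This forces the remaining integrand to vanish and hence the whole expression equals zero, giving the desired identity. The proof is essentially routine; the only subtle point is ensuring the interchange of derivative and spatial integration, which is justified by the regularity $\eta\in H^{2k}(\Omega_T)$ with $2k>\tfrac{d+1}{2}+3$ guaranteed by the setting of Problem~\ref{MP}.
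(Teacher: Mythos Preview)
Your proof is correct and follows essentially the same approach as the paper: both exploit the linearity of $F$ in its second argument to reduce the question to showing that the Fokker--Planck term paired with the purely time-dependent function $\langle u_\epsilon\rangle(t)$ vanishes, then use periodicity in $x$ to kill the divergence and second-order terms and the unit-mass constraint $\eta\in\widehat\Aa$ to kill the $\eta_t$ term. The only difference is cosmetic: you invoke linearity explicitly at the outset to isolate the difference, whereas the paper writes out the $w_2 = v - u_\epsilon$ integral and splits it directly.
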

\begin{proof}
Using \eqref{Defutild}, the second term on the right-hand side of \eqref{DRF2} with \(w_2=v- u_\epsi\) can be written as
\begin{equation*}
\begin{split}
&\int_0^T\!\!\!\int_{\Tt^d}
\Big(
\eta_t-\sum_{i,j=1}^d(a_{ij}(x)\eta)_{x_ix_j}-\div\big(\eta D_p H(x,Dv)\big)\Big)( v- u_\epsilon )\,\dx\dt\\
=&
\int_0^T\!\!\!\int_{\Tt^d}
\Big(
\eta_t - \sum_{i,j=1}^d(a_{ij}(x)\eta)_{x_ix_j} 
- \div\big(\eta D_p H(x,Dv)\big)\Big)( v-\tilde u_\epsilon )\,\dx\dt\\
&-
\int_0^T\!\!\!\int_{\Tt^d}
\Big(
\eta_t - \sum_{i,j=1}^d(a_{ij}(x)\eta)_{x_ix_j} 
-\div\big(\eta D_p H(x,Dv)\big)\Big)\big<u_\epsilon\big>(t)\,\dx\dt.
\end{split}
\end{equation*}
Because \(\eta\in \widehat \Aa\), we have $\int_{\Tt^d} \eta_t(x,t)\,\dx = \frac{d}{dt}\left(\int_{\Tt^d}\eta(t,x)\,\dx\right)=0$
for a.e.~$t\in (0,T)$. Moreover,  using integration by parts with respect to the space variables together with the fact that  $\big<u_\epsilon\big>$ depends only on $t$, we conclude that
\begin{equation*}
%\int_0^T\left<u_\epsilon\right>(t)\int_{\Tt^d}\eta_t(t,x)\,\dx\dt
%-
%\int_0^T\left<u_\epsilon\right>(t)\int_{\Tt^d}\div(\eta D_p H(x,Dv))\,\dx\dt
\int_0^T\!\!\!\int_{\Tt^d}\Big(
\eta_t - \sum_{i,j=1}^d(a_{ij}(x)\eta)_{x_ix_j} 
-\div\big(\eta D_p H(x,Dv)\big)\Big)\big<u_\epsilon\big>(t)\,\dx\dt=0,
\end{equation*}
from which the statement follows.
\end{proof}

%weak solution tilde u
\begin{proof}[Proof of Theorem~\ref{TOP}]
Let $(m_\epsilon, u_\epsilon)\in H^{2k}(\Omega_T) \times H^{2k}(\Omega_T)$ be the unique weak solution to Problem~\ref{MP} in the sense of Definition~\ref{DMPWS1} and with $\sigma= 0$ and $\xi= 0$. Fix  $(\eta, v)\in \widehat\Aa \times \Bb$. By (E2) and (E3) in Definition~\ref{DMPWS1},  we have
\begin{equation*}
\left\langle
  F_\epsilon
  \begin{bmatrix}
      m_\epsilon \\
      u_\epsilon
  \end{bmatrix},
  \begin{bmatrix}
      \eta  \\
      v 
  \end{bmatrix}
  -
  \begin{bmatrix}
      m_\epsilon \\
      u_\epsilon
  \end{bmatrix}
\right\rangle
\geq  0.
\end{equation*}
Then, using Lemma~\ref{MORF}, we have
\begin{equation}\label{eq:pfe}
\begin{aligned}
0&\leq \left\langle
  F_{\epsilon}
  \begin{bmatrix}
      \eta \\
      v
  \end{bmatrix}-F_{\epsilon} 
  \begin{bmatrix}
      m_\epsilon \\
      u_\epsilon
  \end{bmatrix},
  \begin{bmatrix}
      \eta  \\
      v 
  \end{bmatrix}
  -
  \begin{bmatrix}
      m_\epsilon \\
      u_\epsilon
  \end{bmatrix}
\right\rangle \leq \left\langle
  F_{\epsilon}
  \begin{bmatrix}
      \eta \\
      v
  \end{bmatrix},
  \begin{bmatrix}
      \eta  \\
      v 
  \end{bmatrix}
  -
  \begin{bmatrix}
      m_\epsilon \\
      u_\epsilon
  \end{bmatrix}
\right\rangle= \left\langle
  F
  \begin{bmatrix}
      \eta \\
      v
  \end{bmatrix},
  \begin{bmatrix}
      \eta  \\
      v 
  \end{bmatrix}
  -
  \begin{bmatrix}
      m_\epsilon \\
      u_\epsilon
  \end{bmatrix}
\right\rangle + c_\epsi,
\end{aligned}
\end{equation}
where
\begin{equation}\label{vanish1}
\begin{aligned}
c_\epsi :=&\,\,\epsi\int_0^T\!\!\!\int_{\Tt^d}\Big( \eta (\eta- m_\epsi) +  \sum_{|\beta|=2k}\partial_{t,x}^\beta
\eta \partial_{t,x}^\beta  (\eta- m_\epsi) \Big)\,\dx\dt\\&\quad+ \epsilon\int_0^T\!\!\!\int_{\Tt^d} \Big( v(v- u_\epsi)+ \sum_{|\beta|=2k}\partial_{t,x}^\beta
v\partial_{t,x}^\beta (v- u_\epsi) \Big)\,\dx\dt. 
\end{aligned}
\end{equation}

Combining H\"older's inequality with Corollary~\ref{aprisqrt}, we get
\begin{equation}
\begin{aligned}\label{eq:lim1}
\lim_{\epsi\to0} c_\epsi = 0.
\end{aligned}
\end{equation}

On the other hand, by Lemma~\ref{WCWS1},  there exists $(m,\tilde u)\in L^1(\Omega_T) \times
L^\gamma((0,T);W^{1,\gamma}(\Tt^d))$  satisfying  (D1) in Definition~\ref{DOPWS1} and such that $(m_\epsilon, \tilde u_\epsilon)$ converges to $(m,\tilde u)$
weakly in $L^1(\Omega_T)\times L^\gamma((0,T);\allowbreak W^{1,\gamma}(\Tt^d))$ as $\epsilon \to 0$, extracting a subsequence if necessary.
Then,
by the definition of \(F[\eta,v]\)
(see \eqref{DRF2}) and Lemma~\ref{Equtu}, we obtain
\begin{equation}
\begin{aligned}
\lim_{\epsi\to0}  \left\langle
  F
  \begin{bmatrix}
      \eta \\
      v
  \end{bmatrix},
  \begin{bmatrix}
      \eta  \\
      v 
  \end{bmatrix}
  -
  \begin{bmatrix}
      m_\epsilon \\
      u_\epsilon
  \end{bmatrix}
\right\rangle 
\end{aligned}
=
\label{eq:lim2}
\begin{aligned}
\lim_{\epsi\to0}  \left\langle
  F
  \begin{bmatrix}
      \eta \\
      v
  \end{bmatrix},
  \begin{bmatrix}
      \eta  \\
      v 
  \end{bmatrix}
  -
  \begin{bmatrix}
      m_\epsilon \\
      \tilde u_\epsilon
  \end{bmatrix}
\right\rangle  = \left\langle
  F
  \begin{bmatrix}
      \eta \\
      v
  \end{bmatrix},
  \begin{bmatrix}
      \eta  \\
      v 
  \end{bmatrix}
  -
  \begin{bmatrix}
      m \\
      \tilde u
  \end{bmatrix}
\right\rangle.
\end{aligned}
\end{equation}

From \eqref{eq:pfe}, \eqref{eq:lim1}, and \eqref{eq:lim2}, we conclude that
\begin{equation*}
\begin{aligned}
 \left\langle
  F
  \begin{bmatrix}
      \eta \\
      v
  \end{bmatrix},
  \begin{bmatrix}
      \eta  \\
      v 
  \end{bmatrix}
  -
  \begin{bmatrix}
      m \\
      \tilde u
  \end{bmatrix}
\right\rangle\geq 0;
\end{aligned}
\end{equation*}
thus, \((m,\tilde u)\) also satisfies (D2) in Definition~\ref{DOPWS1}. Therefore, \((m,\tilde u)\) is a weak solution of Problem~\ref{OP} in the sense of Definition~\ref{DOPWS1}.
\end{proof}

%
%existence $\mu(t)$
\begin{lem}\label{Elam1}
%\marginpar{\vskip6mm\tiny {\color{red}{regularity\\ could be\\ optimized}}}
Assume that Assumptions~\ref{Hconv}--\ref{Hmono} hold for some \(\gamma>1\), and let $(m,\tilde u)\in L^1(\Omega_T) \times L^{\gamma}((0,T);W^{1,\gamma}(\Tt^d))$ be a weak solution to Problem~\ref{OP} in the sense of Definition~\ref{DOPWS1}. Assume further   that 
  $(m , \tilde u)\in H^{2k}(\Omega_T)\times H^{2k}(\Omega_T)$ and that  $m(0,\cdot)=m_0(\cdot)$, $m>0$ in $\Omega_T$, and $\tilde u(T,\cdot)=u_T(\cdot)$. Then, there exists $\mu\in C(0,T)$ such that
\begin{equation}\label{Meq2}
{\tilde u}_t + \sum_{i,j=1}^da_{ij}(x){\tilde u}_{x_ix_j} 
- H(x,D\tilde u) + g(m,h(\boldsymbol{m})) + V(t,x) = \mu(t)\quad \hbox{ in } \Omega_T.
\end{equation}
\end{lem}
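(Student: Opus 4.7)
The strategy is to differentiate the variational inequality (D2) along admissible perturbations of $(m,\tilde u)$ and then extract the Hamilton--Jacobi equation modulo a Lagrange multiplier associated with the mass constraint built into $\widehat\Aa$.

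\emph{Step 1 (mass conservation).} Since $\tilde u\in H^{2k}(\Omega_T)$ satisfies $\tilde u(T,\cdot)=u_T$, we have $\tilde u\in\Bb$; to conclude $(m,\tilde u)\in\widehat\Aa\times\Bb$ it remains to show that $M(t):=\int_{\Tt^d}m(t,x)\,\dx\equiv 1$. I fix any $\eta_0\in\widehat\Aa$ (e.g.\ $\eta_0(t,x)\equiv m_0(x)$) and $\phi\in C^1([0,T])$ with $\phi(T)=0$, and plug $(\eta,v)=(\eta_0,\tilde u+\tau\phi(t))\in\widehat\Aa\times\Bb$ into (D2). Because $\phi$ depends only on $t$, $D\tilde u$ is unchanged and $\langle F[\eta_0,\tilde u+\tau\phi],(\eta_0-m,\tau\phi)\rangle$ is affine in $\tau\in\Rr$; it equals $A+\tau C$, with $A$ independent of $\tau$ and, after integrating over $\Tt^d$ using the divergence theorem on the torus and $\int_{\Tt^d}\eta_0\,\dx\equiv 1$,
\begin{equation*}
C=\int_0^T\phi'(t)\bigl(1-M(t)\bigr)\,\dt.
\end{equation*}
Nonnegativity of $A+\tau C$ for every $\tau\in\Rr$ forces $C=0$, and a standard distributional argument combined with $M(0)=\int_{\Tt^d}m_0\,\dx=1$ then yields $M\equiv 1$.

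\emph{Step 2 (Euler--Lagrange at $(m,\tilde u)$).} I pick $w_1\in C_c^\infty(\Omega_T)$ with $\int_{\Tt^d}w_1(t,\cdot)\,\dx=0$ a.e.\ $t$, and $w_2\in\Bb_0$. Because $m>0$ in the open set $\Omega_T$ and $w_1$ has compact support in $\Omega_T$, the pair $(m+\tau w_1,\tilde u+\tau w_2)$ lies in $\widehat\Aa\times\Bb$ for $|\tau|$ small and for both signs of $\tau$ (compact support in $\Omega_T$ forces $w_1(0,\cdot)=0$ and, with the mean-zero condition, preserves the mass constraint). Substituting into (D2) gives $f(\tau):=\langle F[(m,\tilde u)+\tau(w_1,w_2)],\tau(w_1,w_2)\rangle\ge 0$ with $f(0)=0$. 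The regularity $m,\tilde u\in H^{2k}\hookrightarrow C^{2,l}$ combined with the continuity of $H$, $D_pH$, $g$, $V$, and of $m\mapsto h(\boldsymbol m)$ (which follows from the Fr\'echet differentiability in Assumption~\ref{hhyp}) makes $f$ differentiable at $\tau=0$; the minimum condition $f'(0)=0$ then yields
\begin{equation*}
\left\langle F\begin{bmatrix} m\\ \tilde u\end{bmatrix},\begin{bmatrix} w_1\\ w_2\end{bmatrix}\right\rangle=0.
\end{equation*}

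\emph{Step 3 (Lagrange multiplier and conclusion).} Taking $w_2\equiv 0$ and $w_1(t,x)=\phi(t)\psi(x)$ with $\phi\in C_c^\infty(0,T)$, $\psi\in C^\infty(\Tt^d)$, and $\int_{\Tt^d}\psi\,\dx=0$, the identity above reduces to $\int_0^T\phi(t)\int_{\Tt^d}G(t,x)\psi(x)\,\dx\,\dt=0$, where $G$ denotes the left-hand side of \eqref{Meq2}. For each $t$, $G(t,\cdot)$ is therefore $L^2(\Tt^d)$-orthogonal to the mean-zero subspace, hence constant in $x$. Setting $\mu(t):=\int_{\Tt^d}G(t,x)\,\dx$, the continuity of $G$ on $\overline\Omega_T$ (from the Morrey embedding applied to $m$ and $\tilde u$ together with the continuity of the coefficients, $H$, $g$, $h$, and $V$) gives $\mu\in C([0,T])\subset C(0,T)$ and $G(t,x)=\mu(t)$ pointwise in $\Omega_T$, which is exactly \eqref{Meq2}. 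The delicate part is Step~1: the constraint $\int m=1$ is not built into Definition~\ref{DOPWS1}, and the $t$-only perturbation $\tau\phi(t)$, chosen so that the variational inequality is affine in $\tau$, is the key device to extract it; once mass conservation is secured, Steps~2--3 are a routine Euler--Lagrange differentiation followed by a Lagrange-multiplier identification of the constant $\mu(t)$.
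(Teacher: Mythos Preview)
Your argument is correct and lands on the same endpoint as the paper---that the Hamilton--Jacobi expression $G(t,x)$ is constant in $x$ for each $t$---but your Step~1 differs from the paper's. The paper establishes $\int_{\Tt^d} m(t,\cdot)\,\dx \equiv 1$ by quoting Lemma~\ref{PAm}, which was proved for the specific weak solution obtained as the weak limit of the regularized solutions $(m_\epsilon,u_\epsilon)$; your derivation of mass conservation is intrinsic to Definition~\ref{DOPWS1}, exploiting the affine dependence of (D2) on a time-only perturbation of $v$. This makes your proof self-contained for an arbitrary weak solution satisfying the stated regularity, which is a small gain in generality. In Steps~2--3 the two proofs are essentially equivalent: the paper perturbs only $\eta$ by $\delta\varphi_{x_i}$ (automatically mean-zero), divides by $\delta$, and passes to the limit to get $\partial_{x_i} G=0$ directly, whereas you perturb with a general mean-zero $w_1$ and then specialize. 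One minor point: in Step~1 you should take $\phi$ smooth (say $\phi\in C^\infty_c([0,T))$ or $\phi\in H^{2k}$) rather than merely $C^1$, so that $\tilde u+\tau\phi\in H^{2k}(\Omega_T)$ and hence lies in $\Bb$.
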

\begin{proof}
Recalling \eqref{eq:MorreyET}, since $m\in H^{2k}(\Omega_T)$ and thus, by Morrey's theorem, $m \in C^{1,l}(\Omega_T)$ for some $l\in (0,1)$, Lemma~\ref{PAm} yield
\begin{equation*}
\int_{\Tt^d}m(t,x)\,\dx = \int_{\Tt^d}m_0(x)\,\dx=1
\end{equation*}
for all  $t\in(0,T)$. Hence, $m\in \widehat\Aa$. 
Fix $\varphi\in C_c^\infty(\Omega_T)$ and $i\in\{1,...,d\}$. Then, $\int_{\Tt^d} \varphi_{x_i}(t,x)\,\dx=0$ for all $t\in(0,T)$. Thus, since $m>0$, we have  $m+\delta \varphi_{x_i}\in
\widehat \Aa$ for all  $\delta\in \Rr$  with \(|\delta|\) sufficiently small. Taking $\eta=m+\delta \varphi_{x_i}$ and $v=\tilde u$ in (D2) and integrating by parts on \(\Tt^d\), we obtain
\begin{equation}\label{Meq1}
\delta \int_0^T\!\!\!\int_{\Tt^d} \frac{\partial}{\partial x_i}\Big(
{\tilde u}_t + \sum_{i,j=1}^da_{ij}(x){\tilde u}_{x_ix_j} 
 - H(x,D\tilde u) + g(m+\delta\varphi, h(\boldsymbol{m}+\delta\boldsymbol{\varphi})) +V(t,x) \Big) \varphi\,\dx\dt
\geq 0.
\end{equation}
As the sign of $\delta $ is arbitrary, we conclude that \eqref{Meq1} holds with ``$\geq$" replaced by ``$=$". Then, dividing by \(\delta\in\Rr\setminus\{0\}\) first and letting  $\delta\to 0$ afterwards, we get  
\begin{equation*}
\int_0^T\!\!\!\int_{\Tt^d} \frac{\partial}{\partial x_i}\Big(
{\tilde u}_t + \sum_{i,j=1}^da_{ij}(x){\tilde u}_{x_ix_j} 
 - H(x,D\tilde u) + g(m,h(\boldsymbol{m})) +V(t,x) \Big) \varphi\,\dx\dt =0
\end{equation*}
by Lebesgue dominated convergence theorem (recall \eqref{hFD}).  Because $\varphi\in C_c^\infty(\Omega_T)$ and and $i\in\{1,...,d\}$ are arbitrary, the preceding equality implies that for each $t\in (0,T)$, there exists $\mu(t)$ for which \eqref{Meq2} holds a.e.~in \(\Omega_T\). 
Since the left-hand side of \eqref{Meq2} belongs to $C(\Omega_T)$,  we conclude that $\mu\in C(0,T)$ and \eqref{Meq2} holds pointwise in \(\Omega_T\).
\end{proof}

\begin{pro}\label{ECS1}%\marginpar{\vskip6mm\tiny {\color{red}{regularity\\
%could be\\ optimized}}}
Assume that Assumptions~\ref{Hconv}--\ref{Hmono} hold for some \(\gamma>1\), and let $(m,\tilde u)\in L^1(\Omega_T) \times L^{\gamma}((0,T);W^{1,\gamma}(\Tt^d))$ be a weak solution to Problem~\ref{OP} in the sense of Definition~\ref{DOPWS1}. 
Assume further that $(m , \tilde u)\in H^{2k}(\Omega_T)\times H^{2k}(\Omega_T)$ satisfies  $m(0,\cdot)=m_0(\cdot)$, $m>0$ in $\Omega_T$, and $\tilde u(T,\cdot)=u_T(\cdot)$
and that  
$m\mapsto g(m,h(\boldsymbol{m}))$ is a  strictly monotone map with respect to the $ L^2$-inner product; that is, for $m_1, m_2 \in L^2(\Omega_T)$ with $m_1\neq m_2$, we have
\begin{equation}\label{stmono1}
\int_{0}^{T}\!\!\!\int_{\Tt^d}
(g(m_1,h(\boldsymbol{m_1}))-g(m_2,h(\boldsymbol{m_2})))(m_1-m_2)
\,\dx\dt>0.
\end{equation} 
Let $\mu\in C(\Omega_T)$ be given by Lemma~\ref{Elam1}, and define  $u:=\tilde u +\int_t^T \mu(s)\, \ds$. Then, $(m, u)$ is the unique classical solution to Problem~\ref{OP}.
\end{pro}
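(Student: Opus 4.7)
The plan is to proceed in two steps: first verify that $(m,u)$ is a classical solution of Problem~\ref{OP}, and then establish uniqueness in the class of classical solutions.

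For the existence part, since $u=\tilde u+\int_t^T\mu(s)\,\ds$ differs from $\tilde u$ only by a function of $t$, we have $u_t=\tilde u_t-\mu(t)$ and $Du=D\tilde u$, while $u(T,\cdot)=\tilde u(T,\cdot)=u_T$. Substituting into \eqref{Meq2} of Lemma~\ref{Elam1}, the $\mu(t)$ terms cancel and the Hamilton--Jacobi equation in \eqref{Highd1o} holds pointwise in $\Omega_T$. For the Fokker--Planck equation, I would test (D2) in Definition~\ref{DOPWS1} against the admissible pair $(\eta,v)=(m,\tilde u+\delta\varphi)$, where $\varphi\in C_c^\infty(\Omega_T)$ and $\delta\in\Rr$ is small: this is admissible because $m\in\widehat\Aa$ (as established in the proof of Lemma~\ref{Elam1}) and $\tilde u+\delta\varphi\in\Bb$ since $\varphi$ vanishes near $t=T$. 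This yields
\begin{equation*}
\delta\int_{\Omega_T}\Bigl(m_t-\sum_{i,j=1}^d(a_{ij}m)_{x_ix_j}-\div\bigl(mD_pH(x,D\tilde u+\delta D\varphi)\bigr)\Bigr)\varphi\,\dx\dt\geq 0.
\end{equation*}
Dividing by $\delta$, letting $\delta\to 0^{\pm}$, and using the smoothness of $H$ together with the embedding $H^{2k}(\Omega_T)\hookrightarrow C^{2}(\overline\Omega_T)$, the Fokker--Planck equation follows pointwise. The initial condition $m(0,\cdot)=m_0$, the terminal condition $u(T,\cdot)=u_T$, and the nonnegativity $m\geq 0$ are given by hypothesis, so $(m,u)$ is a classical solution.

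For uniqueness, I would run the standard Lasry--Lions monotonicity argument. Let $(m_1,u_1)$ and $(m_2,u_2)$ be classical solutions, set $\bar m:=m_1-m_2$ and $\bar u:=u_1-u_2$, multiply the difference of the HJ equations by $\bar m$ and the difference of the FP equations by $\bar u$, and integrate over $\Omega_T$. The boundary conditions yield $\bar m(0,\cdot)=0$ and $\bar u(T,\cdot)=0$, so $\int_{\Omega_T}(\bar m\bar u)_t\,\dx\dt=0$; integration by parts on $\Tt^d$, together with the symmetry of $A$ and periodicity, cancels the second-order spatial terms. What remains is
\begin{equation*}
\int_{\Omega_T}\Bigl[\bar m\bigl(g(m_1,h(\boldsymbol{m}_1))-g(m_2,h(\boldsymbol{m}_2))\bigr)+m_1E_1+m_2E_2\Bigr]\,\dx\dt=0,
\end{equation*}
where $E_i:=H(x,Du_j)-H(x,Du_i)-D_pH(x,Du_i)\cdot(Du_j-Du_i)\geq 0$ for $\{i,j\}=\{1,2\}$ by convexity of $H$ (Assumption~\ref{Hconv}). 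Since $m_i\geq 0$, each of the three integrands is nonnegative, and strict monotonicity \eqref{stmono1} forces $m_1=m_2=:m$.

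With $m$ fixed, the HJ equations for $u_1$ and $u_2$ have the same source term and terminal data, so $\bar u$ satisfies the linear backward equation
\begin{equation*}
\bar u_t+\sum_{i,j=1}^da_{ij}(x)\bar u_{x_ix_j}-b(t,x)\cdot D\bar u=0,\quad b(t,x):=\int_0^1D_pH(x,sDu_1+(1-s)Du_2)\,\ds,
\end{equation*}
with $\bar u(T,\cdot)=0$. After reversing time, a standard energy estimate (testing against $\bar u$, absorbing the first-order term via Young's inequality, and applying Gronwall) yields $\bar u\equiv 0$, hence $u_1=u_2$. The main subtlety is that $A$ is only positive semi-definite, so the diffusion contributes no useful dissipation and one must rely on the smoothness of $b$ and on zero terminal data; this is harmless because the coefficients are smooth and the Gronwall step does not need strict parabolicity.
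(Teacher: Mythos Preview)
Your existence argument is essentially identical to the paper's: both use $u_t=\tilde u_t-\mu$ together with \eqref{Meq2} for the Hamilton--Jacobi equation, and both take $(\eta,v)=(m,\tilde u+\delta\varphi)$ in (D2), divide by $\delta$, and let $\delta\to 0$ to recover the Fokker--Planck equation pointwise.

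For uniqueness the paper simply asserts, in one sentence, that strict monotonicity \eqref{stmono1} forces the classical solution to be unique. You instead spell out the full Lasry--Lions computation to obtain $m_1=m_2$, and then add a separate step---linearising the Hamilton--Jacobi equation and running an $L^2$ energy estimate with Gronwall---to conclude $u_1=u_2$. This extra step is genuinely needed when $H$ is merely convex (not strictly convex), since the vanishing of $m_1E_1+m_2E_2$ does not by itself yield $Du_1=Du_2$; your observation that the first-order terms can be integrated by parts once more so that only $\|\bar u\|_{L^2}^2$ appears (avoiding any need for dissipation from the degenerate $A$) is exactly what makes the Gronwall argument close. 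So your uniqueness proof is more detailed and more self-contained than the paper's, at no real cost.
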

\begin{proof}

We have \(u_t (x,t) ={\tilde u}_t(x,t) -\mu(t) \) and 
\(\partial^\alpha_x  u (x,t) =\partial^\alpha_x {\tilde u} (x,t)\) for all \((x,t) \in \Omega_T\). Because \((m,\tilde u)\) satisfies \eqref{Meq2} pointwise in \(\Omega_T\), we conclude that \((m,u)\) satisfies 
\begin{align*}
&{u}_t + \sum_{i,j=1}^da_{ij}(x) u_{x_ix_j} 
  - H(x,Du) + g(m,h(\boldsymbol{m})) + V(t,x) =0
\end{align*}
pointwise
in \(\Omega_T\).
Moreover, as proved in Lemma~\ref{Elam1}, $m\in \widehat\Aa$. Fix \(\delta>0\) and  \(\psi\in C_c^{\infty}(\Omega_T)\), and choose $(\eta,v)=( m,
\tilde u+\delta\psi )$ in (D2). Then, dividing by $\delta$, letting $\delta\to0$,
and using the arbitrariness of \(\psi\) and the identity  
\(\partial^\alpha_x  u=\partial^\alpha_x \tilde u\), we conclude that \((m,u)\) also satisfies 
\begin{align*}
&m_t - \sum_{i,j=1}^d(a_{ij}(x)m)_{x_ix_j} 
- \div\big(m D_p H(x,Du)\big)= 0
\end{align*}
pointwise
in \(\Omega_T\). Thus, \((m,u)\) is a classical solution to Problem~\ref{OP}. Finally, we observe that because 
 $g(m,h(\boldsymbol{m}))$ satisfies \eqref{stmono1} for $m_1, m_2\in L^2(\Omega_T)$ with $m_1\neq m_2$, the solution is unique. 
\end{proof}

%
%8
%Extra properties of weak solutions
\section{Properties of weak solutions to Problem~\ref{OP}}\label{prowsOP}
Next, we study properties of  weak solutions, $(m,u)$, to Problem~\ref{OP} in the sense of Definition~\ref{DOPWS1}. In particular, we show that $m$ and $u$ satisfy a transport equation and a Hamilton--Jacobi equation, respectively, in a weak sense.
Hereafter, we fix $\sigma\equiv0$ and $\xi\equiv0$, and we consider
the case in which  
\begin{equation}\label{exaHg}
H(x,p)=\frac{1}{2}|p|^2, \quad g(m,\theta) = m^{r} + \theta, \quad 
h(\boldsymbol{m})(x,t)=\big(\zeta\ast(\zeta\ast m)(\cdot, x)\big)(t),
\end{equation}
where $r>0$ and $\zeta\in C_c^\infty(\Tt^d)$ is such that $\zeta \geq0$, $\|\zeta\|_{L^2(\Tt^d)}=1$, and, for any $f$, $g \in L^1(\Tt^d)$, we have $\int_{\Tt^d}f(x)\, (\zeta\ast g)(x)\,\dx= \int_{\Tt^d}(\zeta \ast f)(x)\,g(x)\,\dx$.
Then, recalling Remark~\ref{esth1}, it can be checked that  Assumptions~\ref{Hconv}--\ref{Hmono} hold with \(\gamma=2\).
\begin{lem}\label{wlmJ}
Let \(H\), \(g\), and \(h\) be given by \eqref{exaHg},  
 let $(m_\epsilon,u_\epsilon)$ be the unique weak solution to Problem~\ref{MP} in the sense of Definition~\ref{DMPWS1} (with
\(\sigma=0\) and \(\xi=0\)),
and set \(q:=\frac{2(1+r)}{2+r}\).  Then, there exist $m\in L^{1+r}(\Omega_T)$ and $J\in L^{q}(\Omega_T;\Rr^d)$ such that $m_\epsilon\rightharpoonup m$ weakly in $L^{1+r}(\Omega_T)$ and $m_\epsilon Du_\epsilon \rightharpoonup J$ weakly in $L^{q}(\Omega_T;\Rr^d)$ as $\epsilon \to 0$, extracting a subsequence if necessary.
\end{lem}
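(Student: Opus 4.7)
The plan is to derive, from the a priori estimates already established, an $L^{1+r}$ bound on $m_\epsilon$ and an $L^2$ bound on $Du_\epsilon$ independent of $\epsilon$, and then combine these via H\"older's inequality to obtain the desired $L^q$ bound on $m_\epsilon Du_\epsilon$. Weak compactness will then follow immediately from reflexivity of the relevant Lebesgue spaces.

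First, I would unpack Proposition~\ref{apriWS1} for the specific choice \eqref{exaHg}. Since $\gamma=2$, the inequality \eqref{aprimu} combined with Corollary~\ref{apriDu1} (which gives $\Vert Du_\epsilon\Vert_{L^2(\Omega_T)}\leq C$, and hence $\Vert Du_\epsilon\Vert_{L^1(\Omega_T)}\leq C$ on the bounded domain $\Omega_T$) yields
\begin{equation*}
\int_0^T\!\!\!\int_{\Tt^d} m_\epsilon\,g(m_\epsilon,h(\boldsymbol{m}_\epsilon))\,\dx\dt \leq C
\end{equation*}
uniformly in $\epsilon$. For the specific $g$ and $h$ in \eqref{exaHg}, we have $m_\epsilon\,g(m_\epsilon,h(\boldsymbol{m}_\epsilon)) = m_\epsilon^{1+r} + m_\epsilon\, h(\boldsymbol{m}_\epsilon)$, and both terms are nonnegative because $m_\epsilon\geq 0$ and $h(\boldsymbol{m}_\epsilon)\geq 0$ (as convolution of a nonnegative kernel with nonnegative densities). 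Hence $\Vert m_\epsilon\Vert_{L^{1+r}(\Omega_T)}\leq C$ uniformly in $\epsilon$. Since $r>0$ implies $1+r>1$, $L^{1+r}(\Omega_T)$ is reflexive, so there exist $m\in L^{1+r}(\Omega_T)$ and a subsequence (not relabeled) such that $m_\epsilon\rightharpoonup m$ weakly in $L^{1+r}(\Omega_T)$.

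Next, for $J$, I would choose H\"older exponents tailored so that $q = \frac{2(1+r)}{2+r}$ is exactly the critical integrability. Writing
\begin{equation*}
\int_0^T\!\!\!\int_{\Tt^d}|m_\epsilon Du_\epsilon|^q\,\dx\dt = \int_0^T\!\!\!\int_{\Tt^d} m_\epsilon^q\,|Du_\epsilon|^q\,\dx\dt,
\end{equation*}
and applying H\"older's inequality with conjugate exponents $a=(1+r)/q$ and $b=2/q$ (note that $\tfrac1a+\tfrac1b = \tfrac{q}{1+r}+\tfrac{q}{2} = 1$ precisely when $q = \tfrac{2(1+r)}{2+r}$), one obtains
\begin{equation*}
\Vert m_\epsilon Du_\epsilon\Vert_{L^q(\Omega_T;\Rr^d)} \leq \Vert m_\epsilon\Vert_{L^{1+r}(\Omega_T)}\,\Vert Du_\epsilon\Vert_{L^2(\Omega_T;\Rr^d)}\leq C
\end{equation*}
uniformly in $\epsilon$. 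Since $q>1$ (because $r>0$), $L^q(\Omega_T;\Rr^d)$ is reflexive and the Banach--Alaoglu theorem produces $J\in L^q(\Omega_T;\Rr^d)$ and a further subsequence along which $m_\epsilon Du_\epsilon\rightharpoonup J$ weakly in $L^q(\Omega_T;\Rr^d)$.

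There is no substantive obstacle here: both bounds come essentially for free from the previously proved a priori estimates, and the only delicate point is checking that the exponent $q$ is the scale-critical one dictated by H\"older's inequality applied to the product of an $L^{1+r}$ function against an $L^2$ function.
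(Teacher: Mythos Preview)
Your H\"older exponent computation is wrong, and this is not a mere slip: it hides the fact that your approach cannot reach the stated exponent $q=\frac{2(1+r)}{2+r}$. With $a=(1+r)/q$ and $b=2/q$ one has $\tfrac{1}{a}+\tfrac{1}{b}=q\big(\tfrac{1}{1+r}+\tfrac{1}{2}\big)=q\cdot\tfrac{3+r}{2(1+r)}$, so $\tfrac{1}{a}+\tfrac{1}{b}=1$ forces $q=\frac{2(1+r)}{3+r}$, strictly smaller than $\frac{2(1+r)}{2+r}$. In other words, the product of an $L^{1+r}$ function and an $L^2$ function lands only in $L^{\frac{2(1+r)}{3+r}}$; for $r\leq 1$ this exponent is $\leq 1$, so you do not even get reflexivity.

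The missing idea is that Proposition~\ref{apriWS1} gives you more than the separate bounds $\Vert m_\epsilon\Vert_{L^{1+r}}\leq C$ and $\Vert Du_\epsilon\Vert_{L^2}\leq C$: it also controls the \emph{weighted} quantity $\int m_\epsilon|Du_\epsilon|^2\,\dx\dt\leq C$ (the term $\frac{1}{C}(m+\sigma)|Du|^\gamma$ in \eqref{aprimu} with $\gamma=2$, $\sigma=0$). The paper exploits this by writing $m_\epsilon^q|Du_\epsilon|^q=m_\epsilon^{q/2}\big(m_\epsilon^{1/2}|Du_\epsilon|\big)^q$ and applying Young's inequality with conjugate exponents $\frac{2(1+r)}{q}$ and $\frac{2}{q}$; this yields
\[
\int m_\epsilon^q|Du_\epsilon|^q\,\dx\dt\;\leq\;\frac{q}{2(1+r)}\int m_\epsilon^{1+r}\,\dx\dt+\frac{q}{2}\int m_\epsilon|Du_\epsilon|^2\,\dx\dt\;\leq\;C,
\]
and one checks that $\frac{q}{2(1+r)}+\frac{q}{2}=1$ exactly when $q=\frac{2(1+r)}{2+r}$. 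Once you use this weighted bound, the rest of your argument (reflexivity, weak compactness) goes through unchanged.
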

\begin{proof}
Using  the fact that \(m_\epsilon g(m_\epsilon, h(\boldsymbol{m}_\epsilon))=m_\epsilon^{1+r}
 +m_\epsi h(\boldsymbol{m}_\epsilon) \geq m_\epsilon^{1+r}
   \) because \(m_\epsi h(\boldsymbol{m}_\epsilon)\geq 0\), Proposition~\ref{apriWS1}
and Corollary~\ref{apriDu1}
yield% 
\begin{equation}\label{lem81a}
\int_0^T\!\!\!\int_{\Tt^d}m_\epsilon^{1+r}\,\dx\dt
\leq
\int_0^T\!\!\!\int_{\Tt^d}m_\epsilon g(m_\epsilon, h(\boldsymbol{m}_\epsilon))\,\dx\dt \leq C( 1 + \Vert Du_\epsi\Vert_{L^1(\Omega_T)}) \leq C,
\end{equation}
where $C$ is a positive constant independent of $\epsilon$. 

Next, we estimate \(m_\epsi Du_\epsi\) in \(L^{q}(\Omega_T;\Rr^d)\). We first
note that \(q = 1+ \frac{r}{2+r}>1
\) and \(\frac{q}{2(1+r)} + \frac{q}{2}
=1 \). Then, using Young's inequality,  \eqref{lem81a}, Proposition~\ref{apriWS1}, and Corollary~\ref{apriDu1},
  we get
\begin{equation}\label{lem81b}
\begin{aligned}
\int_0^T\!\!\!\int_{\Tt^d} m_\epsilon^{q}|Du_\epsilon|^{q}\,\dx\dt
 &=\int_0^T\!\!\!\int_{\Tt^d} m_\epsilon^{\frac{q}{2}}\Big(m_\epsilon^{\frac{1}{2}}|Du_\epsilon|\Big)^q\,\dx\dt\\
& \leq \frac{q}{2(1+r)}
\int_0^T\!\!\!\int_{\Tt^d} m_\epsilon^{1+r}\,\dx\dt
+\frac{q}{2}
\int_0^T\!\!\!\int_{\Tt^d} m_\epsi|Du_\epsilon|^2\,\dx\dt
\leq C,
\end{aligned}
\end{equation}
where $C$ is another positive constant independent of $\epsilon$. 

The conclusion follows from \eqref{lem81a} and \eqref{lem81b}
together with the fact that  
$L^{1+r}(\Omega_T)$ and   $L^q(\Omega_T;\Rr^d)$ are reflexive
Banach spaces. 
\end{proof}

\begin{pro}\label{Pwsm1}
Under the assumptions of Lemma~\ref{wlmJ},  let $m\in L^{1+r}(\Omega_T)$ and $J\in L^q(\Omega_T;\Rr^d)$ be given by Lemma~\ref{wlmJ} and
let \(\Bb_0\) be the set introduced in \eqref{DBb0}.  Then, for all $v\in \Bb_0$, $m$ satisfies
\begin{equation}\label{eq:weakFP}
-\int_{\Tt^d}v(0,x)m_0(x)\,\dx
-\int_0^T\!\!\!\int_{\Tt^d} \Big[
\big(v_t -\sum_{i,j=1}^da_{ij}(x)v_{x_ix_j}\big)m + J \cdot Dv \Big]\, \dx\dt =0.
\end{equation}
\end{pro}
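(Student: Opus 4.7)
The strategy is to take condition (E3) in Definition~\ref{DMPWS1} for the regularized weak solution $(m_\epsi,u_\epsi)$ (with $\sigma\equiv 0$ and $\xi\equiv 0$), integrate by parts to move all derivatives from $(m_\epsi,u_\epsi)$ onto the test function $v\in\Bb_0$, and then send $\epsi\to 0$ using the weak convergences provided by Lemma~\ref{wlmJ} and the $\sqrt{\epsi}$-bound of Corollary~\ref{aprisqrt}.

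Fix $v\in\Bb_0$, so that $v\in H^{2k}(\Omega_T)$ and $v(T,\cdot)=0$. Because $H(x,p)=\tfrac{1}{2}|p|^2$, we have $D_pH(x,Du_\epsi)=Du_\epsi$, so (E3) applied to $(m_\epsi,u_\epsi)$ reads
\begin{equation*}
\int_0^T\!\!\!\int_{\Tt^d}\Big(m_{\epsi,t}-\sum_{i,j=1}^d(a_{ij}m_\epsi)_{x_ix_j}-\div(m_\epsi Du_\epsi)\Big)v\,\dx\dt+\epsi\int_0^T\!\!\!\int_{\Tt^d}\Big(u_\epsi v+\sum_{|\beta|=2k}\partial_{t,x}^\beta u_\epsi\,\partial_{t,x}^\beta v\Big)\dx\dt=0.
\end{equation*}
I would integrate by parts once in $t$ in the first summand (using $v(T,\cdot)=0$ and $m_\epsi(0,\cdot)=m_0$, which produces the boundary contribution $-\int_{\Tt^d}v(0,x)m_0(x)\,\dx$), twice in $x_i,x_j$ in the second summand (no boundary contributions thanks to space-periodicity), and once in the divergence in the third summand, thereby transferring every derivative onto $v$. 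The resulting identity has exactly the structure of \eqref{eq:weakFP} with $m_\epsi$ in place of $m$ and $m_\epsi Du_\epsi$ in place of $J$, plus the $\epsi$-weighted remainder.

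For the passage to the limit, the condition $2k>\frac{d+1}{2}+3$ combined with Morrey's embedding $H^{2k}(\Omega_T)\hookrightarrow C^{3,l}(\overline{\Omega}_T)$ for some $l\in(0,1)$ guarantees that $v_t$, $v_{x_ix_j}$, and $Dv$ are all bounded on $\Omega_T$; in particular they lie in the dual of both $L^{1+r}(\Omega_T)$ and $L^q(\Omega_T;\Rr^d)$. By Lemma~\ref{wlmJ}, along a subsequence $m_\epsi\rightharpoonup m$ weakly in $L^{1+r}(\Omega_T)$ and $m_\epsi Du_\epsi\rightharpoonup J$ weakly in $L^q(\Omega_T;\Rr^d)$, so each bulk term passes to its limit termwise, producing exactly the bulk integral in \eqref{eq:weakFP}.

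It remains to show that the $\epsi$-weighted remainder vanishes. Cauchy--Schwarz gives
\begin{equation*}
\Big|\epsi\int_0^T\!\!\!\int_{\Tt^d}\Big(u_\epsi v+\sum_{|\beta|=2k}\partial_{t,x}^\beta u_\epsi\,\partial_{t,x}^\beta v\Big)\dx\dt\Big|\leq\sqrt{\epsi}\,\|\sqrt{\epsi}\,u_\epsi\|_{H^{2k}(\Omega_T)}\,\|v\|_{H^{2k}(\Omega_T)},
\end{equation*}
and Corollary~\ref{aprisqrt} bounds $\|\sqrt{\epsi}\,u_\epsi\|_{H^{2k}(\Omega_T)}$ uniformly in $\epsi$, so this error tends to $0$ as $\epsi\to 0$. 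No single step is particularly delicate; the main thing to watch is the pairing of the weakly convergent sequences $m_\epsi$ and $m_\epsi Du_\epsi$ with the test-function derivatives, which is made possible by the Morrey regularity of $v$, and the vanishing of the high-order regularization error, which is made possible by the $\sqrt{\epsi}$-scaling in Corollary~\ref{aprisqrt}.
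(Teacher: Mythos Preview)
Your proposal is correct and follows essentially the same route as the paper: start from (E3) for $(m_\epsi,u_\epsi)$ with $\sigma=\xi=0$, integrate by parts to shift derivatives onto $v$, then let $\epsi\to 0$ using the weak convergences of Lemma~\ref{wlmJ} for the bulk terms and Corollary~\ref{aprisqrt} (via Cauchy--Schwarz) to kill the $\epsi$-remainder. Your explicit invocation of Morrey's embedding to justify that $v_t$, $a_{ij}v_{x_ix_j}$, and $Dv$ lie in the appropriate dual spaces is a helpful clarification that the paper leaves implicit under ``the regularity of $(a_{ij})$ and $v$.''
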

\begin{proof}
Let $(m_\epsilon,u_\epsilon)$ be the unique weak solution to Problem~\ref{MP} in the sense of Definition~\ref{DMPWS1} and fix $v\in \Bb_0$; that is, $v\in H^{2k}(\Omega_T)$ and $v(x,T)=0$. Then, using (E3) and integration by parts,  we have
\begin{align*}
&-\int_{\Tt^d}v(0,x)m_0(x)\,\dx
-\int_0^T\!\!\!\int_{\Tt^d} \Big[ 
\Big(v_t-\sum_{i,j=1}^da_{ij}(x)v_{x_ix_j} \Big)m_\epsilon
+
m_\epsilon Du_\epsilon \cdot Dv
\Big]\, \dx\dt\\
&\quad=
-\epsilon\int_0^T\!\!\!\int_{\Tt^d}\Big( u_\epsi v +
\sum_{|\beta|=2k}\partial_{t,x}^{\beta}u_\epsilon \partial_{t,x}^{\beta}v
\Big)\,\dx\dt.
\end{align*}
By Corollary~\ref{aprisqrt} and H\"older's inequality, the right-hand side of the previous equality
converges to zero as \(\epsi\to0\). Hence, using the fact that $m_\epsilon\rightharpoonup
m$ weakly in $L^{1+r}(\Omega_T)$ and $m_\epsilon Du_\epsilon
\rightharpoonup J$ weakly in $L^{q}(\Omega_T;\Rr^d)$, as \(\epsi\to0\),
 together with the regularity of \((a_{ij})_{1\leq
i,j\leq d}\) and \(v\), letting \(\epsi\to0\) in the 
equality above yields \eqref{eq:weakFP}. 
\end{proof}%
\begin{remark}
The preceding proposition gives that $(m,J)$ is a weak solution to the following equation:
\begin{equation*}
\begin{cases}
m_t -\sum_{i,j=1}^d(a_{ij}(x)m)_{x_ix_j} - \div(J)=0 &\mbox{in }\Omega_T,\\
m(0,x)=m_0(x)  &\mbox{on }\Tt^d.
\end{cases}
\end{equation*}
\end{remark}

\begin{pro}\label{Pwsu1}
Let \(H\), \(g\), and \(h\) be given by \eqref{exaHg} with \(r\in(0,1]\).  
 Let $(m,u)$ be a weak solution to Problem~\ref{OP} in the sense of Definition~\ref{DOPWS1}, obtained as a sublimit of \(\{(m_\epsi,u_\epsi)\}_\epsi\) with  $(m_\epsilon,u_\epsilon)$  the unique weak solution to
Problem~\ref{MP} in the sense of Definition~\ref{DMPWS1} (with
\(\sigma=0\) and \(\xi=0\)). Then, for all $\varphi \in H^{2k}(\Omega_T)$ such that $\varphi\geq0$ and $\varphi(0,\cdot)=0$, we have 
\begin{align*}
&-\int_{\Tt^d}u_T(x)\varphi(T,x)\,\dx
+\int_0^T\!\!\!\int_{\Tt^d}
\Big(
u\varphi_t + \sum_{i,j=1}^du_{x_i}(a_{ij}\varphi)_{x_j}
\Big)\,\dx\dt\\
&\quad
+\int_0^T\!\!\!\int_{\Tt^d} \Big( \frac{1}{2}|Du|^2 - m^{r} - \zeta\ast(\zeta\ast m) - V\Big)\varphi\,\dx\dt
\leq 0.
\end{align*}
\end{pro}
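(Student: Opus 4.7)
The plan is to derive the Hamilton--Jacobi subsolution inequality for $u$ by passing to the limit $\epsi \to 0$ in the weak formulation (E2) of Definition~\ref{DMPWS1} satisfied by $(m_\epsi,u_\epsi)$. Given $\varphi\in H^{2k}(\Omega_T)$ with $\varphi\geq 0$ and $\varphi(0,\cdot)=0$, the natural choice of test function is $w:=m_\epsi+\varphi$, which lies in $\Aa$ because $m_\epsi\geq 0$, $\varphi\geq 0$, and $w(0,\cdot)=m_0$. Substituting this $w$ into (E2) with $\sigma\equiv\xi\equiv 0$, integrating by parts in $t$ (using $u_\epsi(T,\cdot)=u_T$ and $\varphi(0,\cdot)=0$), integrating by parts twice in $x$ on the $a_{ij}$-term (using periodicity on $\Tt^d$), and multiplying by $-1$, the inequality becomes
\begin{equation*}
\begin{split}
& -\int_{\Tt^d}u_T\,\varphi(T,\cdot)\,\dx+\int_0^T\!\!\!\int_{\Tt^d}\Big(u_\epsi\,\varphi_t+\sum_{i,j=1}^d (u_\epsi)_{x_i}(a_{ij}\varphi)_{x_j}\Big)\,\dx\dt \\
& \quad +\int_0^T\!\!\!\int_{\Tt^d}\Big(\tfrac{1}{2}|Du_\epsi|^2-m_\epsi^r-\zeta\ast(\zeta\ast m_\epsi)-V\Big)\varphi\,\dx\dt \leq R_\epsi,
\end{split}
\end{equation*}
where $R_\epsi:=\epsi\int m_\epsi\varphi\,\dx\dt+\epsi\sum_{|\beta|=2k}\int\partial^\beta_{t,x}m_\epsi\,\partial^\beta_{t,x}\varphi\,\dx\dt$ tends to zero as $\epsi\to 0$ by Cauchy--Schwarz and Corollary~\ref{aprisqrt}.

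The main step is passing each remaining term to its limit along the subsequence of Lemma~\ref{WCWS1} and Lemma~\ref{wlmJ}. Since $Du_\epsi = D\tilde u_\epsi \rightharpoonup Du$ weakly in $L^2(\Omega_T)$ (here $\gamma=2$) and $(a_{ij}\varphi)_{x_j}\in L^\infty$, the first-order term converges to $\int\sum u_{x_i}(a_{ij}\varphi)_{x_j}$; by linearity of the convolution and weak convergence $m_\epsi\rightharpoonup m$ in $L^{1+r}(\Omega_T)$, the nonlocal term converges to $\int\zeta\ast(\zeta\ast m)\varphi$. Because $\varphi\geq 0$, the convex functional $v\mapsto\int\tfrac12|v|^2\varphi$ is weakly lower semicontinuous on $L^2$, so $\int\tfrac12|Du|^2\varphi\leq\liminf_\epsi\int\tfrac12|Du_\epsi|^2\varphi$. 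The hypothesis $r\in(0,1]$ makes $s\mapsto s^r$ concave on $[0,\infty)$; combined with $\varphi\geq 0$ and the H\"older-type bound $|a^r-b^r|\leq|a-b|^r$, the functional $m\mapsto \int m^r\varphi$ is strongly continuous and concave on $L^{1+r}$, hence weakly upper semicontinuous, so $\limsup_\epsi\int m_\epsi^r\varphi\leq\int m^r\varphi$, that is, $\liminf_\epsi\big(-\int m_\epsi^r\varphi\big)\geq -\int m^r\varphi$. All inequalities point in the direction consistent with the $\leq R_\epsi$ above, so $\liminf_\epsi$ of the left-hand side dominates the target expression, which therefore is $\leq 0$.

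The principal obstacle is the convergence of $\int u_\epsi\,\varphi_t$ to $\int u\,\varphi_t$. Decomposing $u_\epsi = \tilde u_\epsi + \langle u_\epsi\rangle(t)$, the $\tilde u_\epsi$-part converges by the weak convergence of Lemma~\ref{WCWS1} (with $u=\tilde u$), while the contribution $\int\langle u_\epsi\rangle(t)\int_{\Tt^d}\varphi_t\,\dx\dt$ requires controlling the time-dependent mean $\langle u_\epsi\rangle$. I would handle this by integrating the pointwise HJ identity/inequality from Remark~\ref{rmkmEL} over $\Tt^d$, which expresses $\langle u_\epsi\rangle'(t)$ in terms of $\int\sum a_{ij}(u_\epsi)_{x_ix_j}\dx$ (bounded via the uniform $L^2$-control on $Du_\epsi$ from Corollary~\ref{apriDu1} after one integration by parts), $\int\tfrac12|Du_\epsi|^2\dx$, $\int(m_\epsi^r + \zeta\ast(\zeta\ast m_\epsi) + V)\dx$, and the vanishing regularization (Corollary~\ref{aprisqrt}); this yields a uniform-in-$\epsi$ bound on $\langle u_\epsi\rangle$ in a suitable dual space, permitting extraction of a further subsequence along which the mean converges and the chosen representative of the limit $u$ is adjusted accordingly. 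This step is the only truly delicate part; with it in hand, the $\epsi\to 0$ limit of the displayed inequality is exactly the claim.
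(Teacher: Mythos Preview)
Your argument is essentially the paper's: take $w=m_\epsilon+\varphi$ in (E2), integrate by parts, kill the regularization via Corollary~\ref{aprisqrt}, pass the convolution term by the symmetry of $\zeta$, and treat the two nonlinear terms by weak lower semicontinuity (convexity of $p\mapsto\tfrac12|p|^2\varphi$ and, since $r\in(0,1]$, convexity of $s\mapsto -s^r\varphi$ on $\Rr_0^+$).

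The one place you diverge is the term $\int u_\epsilon\varphi_t$. The paper does \emph{not} decompose $u_\epsilon=\tilde u_\epsilon+\langle u_\epsilon\rangle$ nor bound the mean separately; it simply asserts $u_\epsilon\rightharpoonup u$ and $(u_\epsilon)_{x_i}\rightharpoonup u_{x_i}$ weakly in $L^2(\Omega_T)$, citing Lemma~\ref{WCWS1}, and passes the limit in the linear terms directly. Your caution here is not unreasonable---Lemma~\ref{WCWS1} literally gives $\tilde u_\epsilon\rightharpoonup\tilde u$---but your proposed fix via Remark~\ref{rmkmEL} is incomplete: that remark yields only an \emph{inequality} on $\Omega_T$ (the equality holds only on $\{m_\epsilon>0\}$), so integrating over $\Tt^d$ does not give two-sided control of $\langle u_\epsilon\rangle'(t)$, and the subsequent ``adjust the representative of $u$'' step is left vague. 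In short, the paper treats this term as unproblematic, while you flag it and sketch a resolution that would need more work to be rigorous.
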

\begin{proof}
Let \((m,u)\) and   $(m_\epsilon,u_\epsilon)\in
\mathcal{A}\times \mathcal{B}
$ be as stated, and fix $\varphi \in H^{2k}(\Omega_T)$ such that $\varphi\geq0$ and $\varphi(0,\cdot)=0$.
  Taking $w=m_\epsilon+\varphi\in\Aa$ in (E2) and integrating by parts, we obtain
\begin{equation}\label{eq:subsHJ}
\begin{aligned}
&-\int_{\Tt^d}u_T\varphi(T,x)\,\dx
+
\int_0^T\!\!\!\int_{\Tt^d}\Big(
 u_\epsilon\varphi_t + \sum_{i,j=1}^d{u_\epsilon}_{x_i}(a_{ij}\varphi)_{x_j}
 - V\varphi
\Big)\,\dx\dt +\int_0^T\!\!\!\int_{\Tt^d} \frac{1}{2}|Du_\epsilon|^2
\varphi\,\dx\dt   \\
&\quad- \int_0^T\!\!\!\int_{\Tt^d}m_\epsilon^{r}
\varphi\,\dx\dt - \int_0^T\!\!\!\int_{\Tt^d}\zeta\ast(\zeta\ast m_\epsilon)\varphi \,\dx\dt\leq
\epsilon\int_0^T\!\!\!\int_{\Tt^d}\Big(m_\epsilon\varphi +\sum_{|\beta|=2k} \partial_{t,x}^\beta m_\epsilon\partial_{t,x}^\beta \varphi\Big)\,\dx\dt.
\end{aligned}
\end{equation}
Next, 
we  pass \eqref{eq:subsHJ} to the limit as \(\epsi\to0\). First, we observe that  
 Corollary~\ref{aprisqrt} and H\"older's inequality yield
\begin{equation}
\label{eq:limHJ1}
\begin{aligned}
\lim_{\epsi\to0} \epsilon\int_0^T\!\!\!\int_{\Tt^d}\Big(m_\epsilon\varphi
+\sum_{|\beta|=2k} \partial_{t,x}^\beta
m_\epsilon\partial_{t,x}^\beta \varphi\Big)\,\dx\dt
= 0.
\end{aligned}
\end{equation}
Using the fact that \(u_\epsi \rightharpoonup
u
\) and \({u_\epsilon}_{x_i} \rightharpoonup
{u}_{x_i}\) weakly in \(L^2(\Omega_T)\)
for all \(i\in\{1,...,d\}\) (see Lemma~\ref{WCWS1})
together with the regularity of \((a_{ij})_{1\leq
i,j\leq d}\) and \(\varphi\), we conclude
that
\begin{equation}
\label{eq:limHJ2}
\begin{aligned}
&\lim_{\epsi\to0} \bigg[-\int_{\Tt^d}u_T\varphi(T,x)\,\dx
+
\int_0^T\!\!\!\int_{\Tt^d}\Big(
 u_\epsilon\varphi_t + \sum_{i,j=1}^d{u_\epsilon}_{x_i}(a_{ij}\varphi)_{x_j}
 - V\varphi
\Big)\,\dx\dt\bigg]\\
&\quad= -\int_{\Tt^d}u_T(x)\varphi(T,x)\,\dx
+\int_0^T\!\!\!\int_{\Tt^d}
\Big(
u\varphi_t + \sum_{i,j=1}^du_{x_i}(a_{ij}\varphi)_{x_j}
- V\varphi
\Big)\,\dx\dt.
\end{aligned}
\end{equation}
Similarly, the weak convergence \(m_\epsi
\rightharpoonup m\)  in \(L^1(\Omega_T)\)
and the symmetry of \(\zeta\) yield
\begin{equation}
\label{eq:limHJ3}
\begin{aligned}
\lim_{\epsi\to0}  \int_0^T\!\!\!\int_{\Tt^d}\zeta\ast(\zeta\ast
m_\epsilon)\,\varphi  \,\dx\dt
 &=
\lim_{\epsilon\to0}\int_0^T\!\!\!\int_{\Tt^d}
\zeta \ast(\zeta\ast \varphi)\,  m_\epsilon
\,\dx\dt
\\&=
\int_{\Tt^d}
\zeta \ast(\zeta\ast \varphi)\,  m
\,\dx\dt
=
\int_0^T\!\!\!\int_{\Tt^d}
\zeta\ast(\zeta\ast m)\,\varphi \,\dx\dt.
\end{aligned}
\end{equation}
Finally, recalling that \(\varphi
\geq 0\) and \(r\in(0,1]\), we have that for each \((x,t)\in\Omega_T\),
the maps \(p\in \Rr^d\mapsto
\frac{1}{2}|p|^2\varphi(x,t) \) and
\(m\in \Rr^+_0 \mapsto - m^r\varphi(x,t)\)
define convex functions over \(\Rr^d\)
and \(\Rr^+_0\), respectively.  Then,
the lower semicontinuous result  \cite[Theorem 6.54]{FoLe07} and the weak
convergence of \(\{m_\epsi\}_\epsi\)
and \(\{Du_\epsi\}_\epsi\) mentioned above
imply that
\begin{equation*}
%\label{eq:limHJ4}
\begin{aligned}
\int_0^T\!\!\!\int_{\Tt^d}
\frac{1}{2}|Du|^2
\varphi\,\dx\dt   - \int_0^T\!\!\!\int_{\Tt^d}m^{r}
\varphi\,\dx\dt&\leq \liminf_{\epsilon\to0}
\int_0^T\!\!\!\int_{\Tt^d}
\frac{1}{2}|Du_\epsilon|^2
\varphi\,\dx\dt +   \liminf_{\epsilon\to0}\bigg(\!- \int_0^T\!\!\!\int_{\Tt^d}m_\epsilon^{r}
\varphi\,\dx\dt\bigg) \\ & \leq
\liminf_{\epsilon\to0}\bigg(\int_0^T\!\!\!\int_{\Tt^d}
\frac{1}{2}|Du_\epsilon|^2
\varphi\,\dx\dt   - \int_0^T\!\!\!\int_{\Tt^d}m_\epsilon^{r}
\varphi\,\dx\dt\bigg).
\end{aligned}
\end{equation*}
This estimate, together with \eqref{eq:subsHJ}--\eqref{eq:limHJ3},
concludes the proof of 
 Proposition~\ref{Pwsu1}.
\end{proof}
\begin{remark}
Proposition~\ref{Pwsu1} still holds
if we replace the quadratic Hamiltonian,
\(H\), in \eqref{exaHg} by a Hamiltonian
satisfying Assumptions~\ref{Hconv}--\ref{Bderi}.
\end{remark}
\begin{remark}
For \(r\in(0,1]\), Proposition~\ref{Pwsu1} implies that $u$ is a subsolution to the following Hamilton--Jacobi equation:
\begin{equation*}
\begin{cases}
-u_t +\sum_{i,j=1}^da_{ij}(x)u_{x_ix_j}
+\frac{1}{2}|Du|^2 - m^{r}-\zeta\ast(\zeta\ast m)-V=0&\mbox{in } \Omega_T,\\
u(T,x)=u_T(x)&\mbox{on }\Tt^d.
\end{cases}
\end{equation*}
\end{remark}

%
%
%Additional topics
\section{Final remarks}\label{Secfr}
In this section, we show how our methods
can be adapted to address other
 MFG models.
More precisely, we address the existence
of weak solutions to a MFG with congestion
and to a density constrained MFG.

%
%Congestions
\subsection{MFGs with congestion}
Here, we explain how the methods we
developed in the previous sections can
be used to prove the existence of weak
solutions to problems with congestion,
whose underlying Hamiltonian is singular at \(m=0\).
More concretely, we consider the following MFG with congestion.
\begin{problem}\label{fr1}
Let $T>0$ and $d\in \Nn$, and define $\Omega_T=(0,T)\times\Tt^d$. Let $X(\Omega_T)$ and $\Mm_{ac}(\Omega_T)$ be the spaces introduced in Problem~\ref{OP}. Fix $\epsilon \in (0,1)$ and $k \in \Nn$ such that $2k>\frac{d+1}{2} +3$. 
Assume that $a_{ij}\in C^2(\Tt^d)$ for $1\leq i,j\leq d$, $V \in L^{\infty}(\Omega_T)\cap C(\Omega_T)$, $g\in C^1(\Rr_0^+\times \Rr)$, $h:\Mm_{ac}(\Omega_T)\to X(\Omega_T)$ is a (possible nonlinear) operator, $m_0, u_T\in C^{4k}(\Tt^d)$, and $H\in C^2(\Tt^d\times\Rr^d\times \Rr^+)$ are such that, for $x\in \Tt^d$, $A(x)=(a_{ij}(x))$ is a symmetric positive semi-definite matrix, $m_0>0$, $\int_{\Tt^d}m_0(x)\, \dx=1$, and $m\mapsto g(m,h(\boldsymbol{m}))$ is monotone with respect to the $L^2$ inner product.
Find $(m,u)\in L^1(\Omega_T)\times L^\gamma((0,T);W^{1,\gamma}(\Tt^d))$ with $m>0$ solving
\begin{equation*}
\begin{cases}
u_t + \sum_{i,j=1}^da_{ij}(x)u_{x_ix_j} - H(x,Du,m) +g(m,h(\boldsymbol{m})) + V(t,x)=0 & {\rm in}\  \Omega_T, \\[1mm]
 m_t - \sum_{i,j=1}^d(a_{ij}(x)m)_{x_ix_j}-\div\big(mD_pH(x,Du,m)\big) = 0 & {\rm in} \ \Omega_T, \\[1mm]
 m(0,x)=m_{0}(x),\enspace u(T,x)=u_T(x)& \rm{on}\ \Tt^d. 
\end{cases}
\end{equation*} 
\end{problem}
Fix $\delta_0\in (0,1)$ such that \(m_0\geq \delta_0\) in \(\Tt^d\). Then, recalling $\Aa$ and $\widehat \Aa$ given by \eqref{DAa} and \eqref{DAawh} in Section~\ref{intro} respectively, we define 
\begin{align*}
\Aa_{\delta_0} := \big\{\eta\in \Aa\ |\ \eta \geq\delta_0\big\}, \ \ 
\widehat \Aa_{\delta_0}:=\big\{\eta\in \widehat \Aa\ |\ \eta \geq \delta_0\big\}.
\end{align*}
We introduce a notion of weak solutions
similar to that in Definition~\ref{DOPWS1}. %
%Def of weak solutions
\begin{definition}\label{dwsfr1}
A weak solution to Problem \ref{fr1} is a pair $(m,\tilde u) \in L^1(\Omega_T) \times L^\gamma((0,T);W^{1,\gamma}(\Tt^d))$ satisfying
\begin{flalign}
({\rm F1})\enspace  
% u(T,x)=u_T(x)\ \mbox{on}\ \Tt^d,\ 
&\,m \geq 0 \ \mbox{a.e~in }\Omega_T,  &\nonumber \\ 
({\rm F2})\enspace &  \left\langle
   \widehat F
  \begin{bmatrix}
      \eta \\
      v
  \end{bmatrix},
  \begin{bmatrix}
      \eta  \\
      v
  \end{bmatrix}
  -
  \begin{bmatrix}
      m  \\
      \tilde u
  \end{bmatrix}
\right\rangle \geq 0
\quad \mbox{for all}\ (\eta, v)\in \widehat\Aa_{\delta_0} \times \Bb,&\nonumber
\end{flalign}
where, for $(\eta, v) \in H^{2k}(\Omega_T) \times H^{2k}(\Omega_T)$ fixed, $\widehat F[\eta, v]: L^1(\Omega_T) \times L^1(\Omega_T) \to \Rr$ is the functional given by
\begin{equation}\label{DFfr1}
\begin{aligned}
\left\langle
   \widehat F
  \begin{bmatrix}
      \eta \\
      v
  \end{bmatrix},
  \begin{bmatrix}
      w_1 \\
      w_2
  \end{bmatrix}
\right\rangle%_{\Dd' \times \Dd',
%C^{\infty}\times C^{\infty}}
&:=
\int_0^T\!\!\!\int_{\Tt^d}\Big( v_t + \sum_{i,j=1}^da_{ij}(x)v_{x_ix_j} -H(x,Dv,\eta) + g(\eta, h(\boldsymbol{\eta})) + V(t,x)\Big)w_1 \, \dx\dt\\ 
&\quad+\int_0^T\!\!\!\int_{\Tt^d}\bigg[
\eta_t - \sum_{i,j=1}^d(a_{ij}(x)\eta)_{x_ix_j} - \div\Big(\eta D_pH(x,Dv,\eta)\Big) \bigg]w_2\, \dx\dt .
\end{aligned}
\end{equation}
\end{definition}

Instead of Assumptions~\ref{Hconv}--\ref{Bderi}, we suppose the next five assumptions
that, for instance, hold for
\begin{equation*}%\label{Hexa}
H(x,p,m)=c(x)\frac{|p|^\gamma}{m^\tau},
\end{equation*}
where $c\in C^\infty(\Tt^d)$ is positive and $\tau\in (0,1)$.
%
%Assumption:convexity
\begin{hyp}\label{Hconvc}
For all $(x,m)\in \Tt^d\times \Rr^+$, $p\mapsto H(x,p,m)$ is convex in $\Rr^d$.
\end{hyp}
%
%Corecive
\begin{hyp}\label{Hcoer2}
There exists a constant, $C>0$, and $\tau\in (0,1)$ such that, for all $(x,p,m)\in \Tt^d\times \Rr^d\times \Rr^+$, we have
\begin{equation*}
-H(x,p,m) + D_pH(x,p,m)\cdot p
\geq
\frac{1}{C}\frac{|p|^\gamma}{m^\tau}-C.
\end{equation*}
\end{hyp}
%
%Integral 
\begin{hyp}\label{Hbdd2}
Let $\tau$ be as in Assumption~\ref{Hcoer2}. There exists a constant, $C>0$, such that, for all $(x,p,m)\in \Tt^d\times\Rr^d\times \Rr^+$,
we have%
\begin{equation*}
\begin{aligned}
H(x,p,m)
\geq 
\frac{1}{C} \frac{|p|^\gamma}{m^\tau}-C.
\end{aligned}
\end{equation*}
\end{hyp}
%
%Bounds for Derivative
\begin{hyp}\label{Bderi2}
Let $\tau$ be as in Assumption~\ref{Hcoer2}. There exists a constant, $C>0$, such that, for all $(x,p,m)\in \Tt^d\times\Rr^d\times \Rr^+$,
we have
\begin{equation*}
|D_pH(x,p,m)|\leq C\frac{|p|^{\gamma-1}}{m^\tau}+C .
\end{equation*}
\end{hyp}
Moreover, in place of and in analogy
with Assumption~\ref{Hmono},  we assume
the following  monotonicity condition on $\widehat F$.  
\begin{hyp}\label{Hmono2}
The functional $\widehat F$ introduced in Definition~\ref{dwsfr1} is monotone with respect to the $L^2\times L^2$-inner product; that is,
for all $(\eta_1, v_1)$, $(\eta_2, v_2) \in \Aa \times \Bb$ with $\eta_1, \eta_2 >0$, $\widehat F$ satisfies
\begin{equation*}
\left\langle
   \widehat F
  \begin{bmatrix}
      \eta_1  \\
      v_1 
  \end{bmatrix}
  -
   \widehat F
  \begin{bmatrix}
      \eta_2  \\
      v_2 
  \end{bmatrix},
  \begin{bmatrix}
      \eta_1  \\
      v_1
  \end{bmatrix}
  -
  \begin{bmatrix}
      \eta_2  \\
      v_2
  \end{bmatrix}
\right\rangle
\geq 0.
\end{equation*}
\end{hyp}
%
%

%{\color{red}{
Because $m$ given by our previous construction is only nonnegative,
to address congestion we consider instead an approximation $m_\epsilon$ satisfying $m_\epsilon \geq \epsilon$ for some $\epsilon \in (0,1)$.
Also, we set a test function space for $m$, $\widehat\Aa_{\delta_0}$, such that $\eta \in \widehat\Aa_{\delta_0}$ 
satisfies $\eta\geq \delta_0>0$ in
 $\Omega_T$. By Morrey's theorem, for all $(\eta, v)\in \widehat\Aa_{\delta_0}\times \Bb$, we have $\eta, v \in C^{1,l}(\Omega_T)$ for some $l\in (0,1)$, and thus, we obtain $\frac{|Dv|^\gamma}{\eta^\tau}\in C(\overline\Omega_T)$. Therefore, since test function spaces have enough regularity, we use a proof similar to the one in Theorem~\ref{TOP} given in Section~\ref{PfMT2} and obtain the following theorem. 
%
%Theorem
\begin{teo}\label{frtm1}
Consider Problem~\ref{fr1} and suppose 
that Assumptions \ref{hhyp}--\ref{gwc} and \ref{Hconvc}--\ref{Hmono2} hold. Then, there exists a weak solution $(m,\tilde u)\in L^1(\Omega_T) \times L^\gamma((0,T);W^{1,\gamma}(\Tt^d))$ to Problem \ref{fr1} in the sense of Definition~\ref{dwsfr1}.
\end{teo}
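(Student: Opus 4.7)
The plan is to mirror the strategy of Theorem~\ref{TOP}, namely to introduce a high-order elliptic regularization of Problem~\ref{fr1} in $[0,T]\times\Tt^d$, solve the regularized problem by Schaefer's fixed-point theorem, and then pass to the limit by Minty's method. The essential new feature is the singular dependence of $H=H(x,p,m)$ on $m$, which forces us to carry a strictly positive lower bound on the density throughout the approximation.

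First I would define a regularized problem $P_\epsi$ analogous to Problem~\ref{MP}, but replacing $H(x,Du,m)$ and $D_pH(x,Du,m)$ in \eqref{HighdRP} by $H(x,Du,m+\epsi)$ and $D_pH(x,Du,m+\epsi)$, respectively, so that the Hamiltonian and its gradient are well defined regardless of the sign of $m$ during the fixed-point argument. With this truncation, Assumptions~\ref{Hconvc}--\ref{Bderi2} translate into the coercivity, growth, and convexity bounds on $(p,m)\mapsto H(x,p,m+\epsi)$ needed to reproduce Sections~\ref{VP}--\ref{PfMT1}: the variational problem for $m$ is unchanged (it only depends on $H$ through the constant $C_0$), the bilinear form for $u$ remains coercive on $H^{2k}(\Omega_T)$, and Proposition~\ref{ACC} on continuity and compactness of the operator $A$ goes through once one notes that $H(\cdot,\cdot,m+\epsi)$ and $D_pH(\cdot,\cdot,m+\epsi)$ are locally Lipschitz in $(p,m)$ uniformly on balls, using \eqref{eq:MorreyET} to pass to uniform estimates. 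Schaefer's theorem then yields a weak solution $(m_\epsi,u_\epsi)\in H^{2k}(\Omega_T)\times H^{2k}(\Omega_T)$ to $P_\epsi$.

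Next I would repeat the a priori estimates of Section~\ref{Prows}, multiplying the first equation by $m_\epsi-m_0$ and the second by $u_\epsi-u_T$; the key difference is that Assumption~\ref{Hcoer2} yields $(m_\epsi+\epsi)^{-\tau}|Du_\epsi|^\gamma$ instead of $|Du_\epsi|^\gamma$ on the left, which combined with Young's inequality (using that $\tau\in(0,1)$) still produces a uniform $L^\gamma$ bound on $Du_\epsi$ after factoring through $m_0\geq \min m_0 >0$. Together with Corollary~\ref{aprisqrt}, Assumptions~\ref{gint}--\ref{gwc}, and Poincaré--Wirtinger applied to $\tilde u_\epsi := u_\epsi - \langle u_\epsi\rangle$, this provides a subsequence along which $m_\epsi\rightharpoonup m$ in $L^1(\Omega_T)$ and $\tilde u_\epsi \rightharpoonup \tilde u$ in $L^\gamma((0,T);W^{1,\gamma}(\Tt^d))$, with $m\geq 0$, giving (F1) in Definition~\ref{dwsfr1}.

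For (F2) I would use Minty's trick, but only test with pairs $(\eta,v)\in\widehat\Aa_{\delta_0}\times\Bb$. The point is that Morrey's embedding gives $\eta\in C^{1,l}(\overline\Omega_T)$ with $\eta\geq \delta_0>0$, so $H(x,Dv,\eta)$, $D_pH(x,Dv,\eta)$, and $g(\eta,h(\boldsymbol\eta))$ are all uniformly bounded on $\Omega_T$; consequently $\widehat F[\eta,v]$ acts on $L^1(\Omega_T)\times L^1(\Omega_T)$ and the duality pairings with $m_\epsi,\tilde u_\epsi$ pass to the weak limits in $L^1\times L^\gamma$. Define the regularized functional $\widehat F_\epsi$ by adding the same elliptic terms as in \eqref{DRF1}; because $\eta\geq\delta_0>\epsi$ for $\epsi$ small, Assumption~\ref{Hmono2} applied to $(\eta,v)$ and $(m_\epsi,u_\epsi)$ yields
\[
0\leq \Big\langle \widehat F_\epsi\begin{bmatrix}\eta\\ v\end{bmatrix}-\widehat F_\epsi\begin{bmatrix}m_\epsi\\ u_\epsi\end{bmatrix},\begin{bmatrix}\eta\\ v\end{bmatrix}-\begin{bmatrix}m_\epsi\\ u_\epsi\end{bmatrix}\Big\rangle\leq \Big\langle \widehat F\begin{bmatrix}\eta\\ v\end{bmatrix},\begin{bmatrix}\eta\\ v\end{bmatrix}-\begin{bmatrix}m_\epsi\\ u_\epsi\end{bmatrix}\Big\rangle+c_\epsi,
\]
where the term from the regularization of $(m_\epsi,u_\epsi)$ is eliminated using (F2)-type identities (i.e.\ the analogues of (E2)--(E3)) tested with $(\eta,v)$, and the remainder $c_\epsi$ vanishes as $\epsi\to 0$ by Corollary~\ref{aprisqrt} and Hölder. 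Finally, the analogue of Lemma~\ref{Equtu} lets us replace $u_\epsi$ by $\tilde u_\epsi$ in the pairing, since $\eta\in\widehat\Aa_{\delta_0}\subset\widehat\Aa$, and we pass to the weak limit. The main obstacle is precisely this step: ensuring that the nonlinear terms $H(x,Dv,\eta)$, $D_pH(x,Dv,\eta)$, and the divergence structure $\div(\eta D_pH(x,Dv,\eta))$ pair continuously against the weak-$L^1$ limit $m_\epsi\rightharpoonup m$ and the weak-$L^\gamma(W^{1,\gamma})$ limit $\tilde u_\epsi\rightharpoonup \tilde u$; this is guaranteed here because $\eta\in C^{1,l}(\overline\Omega_T)$ stays uniformly away from zero, so all coefficients depending on $\eta$ are bounded and continuous on $\overline\Omega_T$ and the pairings are genuinely linear in $m_\epsi$ and in $Dv$ (which is fixed). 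This yields (F2) and hence completes the proof.
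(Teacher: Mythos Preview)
Your overall strategy---regularize, solve by Schaefer, pass to the limit via Minty---matches the paper's, and the a priori estimates and compactness steps are fine. The gap is in how you handle the singularity of $H$ at $m=0$.

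You propose to shift the Hamiltonian, replacing $H(x,Du,m)$ by $H(x,Du,m+\epsi)$ in the regularized problem while keeping the admissible class $\Aa=\{m\ge 0,\dots\}$. This creates two problems at the Minty step. First, the regularized weak solution $(m_\epsi,u_\epsi)$ satisfies (E2)--(E3) with the \emph{shifted} Hamiltonian, so the operator for which it satisfies a variational inequality is not $\widehat F_\epsi=\widehat F+\text{(elliptic terms)}$ but a shifted version built from $H(\cdot,\cdot,\cdot+\epsi)$. Assumption~\ref{Hmono2} is stated for $\widehat F$, not for this shifted operator; the shift decouples the density argument of $H$ from the mass factor in the transport term (you get $\div(\eta\,D_pH(x,Dv,\eta+\epsi))$ rather than $\div(\eta\,D_pH(x,Dv,\eta))$), and the Lasry--Lions computation underlying the monotonicity does not survive this mismatch without an extra structural hypothesis. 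Second, Assumption~\ref{Hmono2} requires \emph{both} densities to be strictly positive, but your construction only gives $m_\epsi\ge 0$; even evaluating $\widehat F$ at $(m_\epsi,u_\epsi)$ may be undefined where $m_\epsi=0$. So the chain of inequalities you write cannot be justified from Assumption~\ref{Hmono2} as stated.

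The paper avoids both issues by leaving $H$ untouched and instead constraining the admissible class: it works in $\Aa_\epsi:=\{m\in H^{2k}(\Omega_T):m(0,\cdot)=m_0,\ m\ge\epsi\}$ (and the corresponding $\Aa_{\epsi,0}$ in the variational problem). Then $H(x,Du_\epsi,m_\epsi)$ is well defined with no shift, the regularized operator is exactly $\widehat F_\epsi=\widehat F+\text{(elliptic)}$, and since $m_\epsi\ge\epsi>0$ and $\eta\ge\delta_0>0$, Assumption~\ref{Hmono2} applies directly to $(\eta,v)$ and $(m_\epsi,u_\epsi)$. Because $\widehat\Aa_{\delta_0}\subset\Aa_\epsi$ for $\epsi<\delta_0$, the weak formulation can be tested against any $(\eta,v)\in\widehat\Aa_{\delta_0}\times\Bb$, and the Minty inequality passes to the limit exactly as in Section~\ref{PfMT2}. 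Replacing your $\epsi$-shift of $H$ by this lower-bound constraint on the admissible class closes the gap.
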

\begin{proof}[Proof sketch]
First, for \(\epsi\in(0,\delta_0)\), we introduce the following regularized problem
\begin{equation}\label{rpMFGfr1}
\begin{cases}
\displaystyle u_t + \sum_{i,j=1}^da_{ij}(x)u_{x_ix_j} - H(x,Du,m) 
+ g(m,h(\boldsymbol{m})) 
+ V(t,x)
+ 
\epsi\sum_{|\beta|\in\{0,2k\}} \partial^{2\beta}_{t,x}(m+\sigma) 
=0 & \hbox{in } \Omega_T,\\
\displaystyle m_t - \sum_{i,j=1}^d\big(a_{ij}(x)(m+\sigma)\big)_{x_ix_j} 
- 
\div\big((m + \sigma)D_pH(x,Du,m)\big)
+\epsi\sum_{|\beta|\in\{0,2k\}} \partial^{2\beta}_{t,x}
(u+\xi)=0& \hbox{in } \Omega_T,\\
m(0,x)=m_{0}(x),\enspace u(T,x)=u_T(x) & {\rm on}\ \Tt^d, 
\end{cases}
\end{equation}
where $\sigma\in C^{4k}(\overline{\Omega}_T)$
is nonnegative and $\xi\in C^{4k}(\overline{\Omega}_T)$.  

Second, define
\begin{equation*}
\Aa_\epsilon:=\left\{
m \in H^{2k}(\Omega_T)\ |\
m(0, x)=m_0(x),\ m\geq \epsilon
\right\}
\end{equation*}
and 
we consider a notion of weak solution
to this regularized problem similar to the one in Definition~\ref{DMPWS1}  by replacing $\Aa$  by $\Aa_\epsilon$ for each $\epsilon$.

Arguing as in Proposition~\ref{apriWS1}, we have
\begin{equation}\label{Eq9.1}
\begin{split}
&\int_0^T\!\!\!\int_{\Tt^d} \Big(
mg(m,h(\boldsymbol{m}))
+
\frac{1}{C} (m + \sigma)\frac{|Du|^\gamma}{m^\tau} 
+
\frac{1}{C} m_{0} \frac{|Du|^\gamma}{m^\tau} \Big)\, \dx\dt\\
&\quad+
{\epsilon} \int_0^T\!\!\!\int_{\Tt^d} \
\Big(
m^2+u^2+\sum_{|\beta|=2k}(\partial_{t,x}^\beta m)^2
+ \sum_{|\beta|\leq 2k}(\partial_{t,x}^\beta u)^2
 \Big) \, \dx\dt
\leq C (1+\|Du\|_{L^1(\Omega_T)}),
\end{split}
\end{equation}
where $C$ is independent of $\epsilon$.
Moreover, as in the proof of Proposition~4.2 in \cite{FG2} with $0<\epsilon\leq1$, because $\tau\in (0,1)$, we have
\begin{equation}\label{Eq9.2}
\begin{split}
&\int_0^T\!\!\!\int_{\Tt^d} \Big(
m\frac{|Du|^\gamma}{m^\tau} + \frac{|Du|^\gamma}{m^\tau} \Big)\, \dx\dt \geq
\int\!\!\!\int_{\{m\geq 1\}}
m\frac{|Du|^\gamma}{(m+1)^\tau}\,\dx\dt 
+ 
\int\!\!\!\int_{\{m< 1\}}
\frac{|Du|^\gamma}{(m+1)^\tau}\, \dx\dt\\
\geq&
\int\!\!\!\int_{\{m\geq1\}}
\frac{m^{1-\tau}}{2^\tau}|Du|^\gamma\,\dx\dt
+
\int\!\!\!\int_{\{m<1\}}
\frac{1}{2^\tau}|Du|^\gamma\,\dx\dt
\geq
\frac{1}{2^\tau}\|Du\|_{L^\gamma(\Omega_T)}^\gamma.
\end{split}
\end{equation}
Then, since $m_0$ is strictly positive in $\Omega_T$,
combining \eqref{Eq9.1} with 
\eqref{Eq9.2} and  Young's inequality,
we conclude that  $\|Du\|_{L^\gamma(\Omega_T)}\leq C $, where $C$ is independent of $\epsilon$. Also, Corollary~\ref{aprisqrt} holds. Thus, the results in Section~\ref{Prows} hold.

Next, define
\begin{equation*}
\Aa_{\epsilon, 0}:=
\left\{
m\in H^{2k}(\Omega_T)\ | \ 
m(0,x)=0,\ m+m_0\geq \epsilon
\right\}
\end{equation*}
and recall $I_{(m, u)}$ given by \eqref{defImu}. Fix $(m_1, u_1)\in H^{2k-2}(\Omega_T)\times H^{2k-1}(\Omega_T)$ with $m_1 + m_0 \geq \epsilon$ and set $I_1=I_{(m_1,u_1)}$.
Then, we consider the following variational problem: find $m \in \Aa_{\epsilon,0}$ such that
\begin{equation}\label{fr1vp}
I_1[m]
=
\inf_{w\in \Aa_{\epsilon, 0}} I_1[w].
\end{equation}
By the same proof as in Proposition~\ref{EMVP1}, there exists a unique minimizer $m\in \Aa_{\epsilon, 0}$ satisfying \eqref{fr1vp}. 
Therefore, for each $\epsilon \in (0,1)$, we can apply the results of Sections~\ref{VP}--\ref{PfMT1}
(with the obvious modifications) to prove the existence and uniqueness of weak solutions to \eqref{rpMFGfr1} in the sense of Definition~\ref{DMPWS1} with $\Aa$ replaced by $\Aa_{\epsilon}$.
Then, because $\widehat \Aa_{\delta_0}\subset \Aa_\epsilon$ (as
\(0<\epsi<\delta_0\)), we obtain a unique weak solution $(m,u)$ satisfying (E1), (E2), and (E3) in Definition~\ref{DMPWS1} for all $w\in \widehat \Aa_{\delta_0}$ and $v\in \Bb_0$.
Since Proposition~\ref{apriWS1} and Corollary~\ref{apriDu1} hold and $m_\epsilon \geq 0$, applying Poincar\'{e}--Wirtinger inequality and Assumption~\ref{gwc}, Lemma~\ref{WCWS1} follows and thus Lemmas~\ref{PAm} and \ref{Equtu} also hold.

Set $\sigma\equiv 0$ and, for $(\eta, v)\in \Aa_{\delta_0}\times \Bb$, define $\widehat F_\epsilon$ by 
\begin{equation}\label{fr1eq1}
\begin{split}
\left\langle
  {\widehat F}_{\epsilon}
  \begin{bmatrix}
      \eta  \\
      v 
  \end{bmatrix},
      \begin{bmatrix}
          w_1  \\
          w_2
      \end{bmatrix}
\right\rangle
=
\left\langle
  {\widehat F}
  \begin{bmatrix}
      \eta  \\
      v 
  \end{bmatrix},
      \begin{bmatrix}
          w_1  \\
          w_2
      \end{bmatrix}
\right\rangle
&+
\epsilon\int_0^T\!\!\!\int_{\Tt^d}
\Big(
\eta w_1 + \sum_{|\beta|=2k}\partial_{t,x}^\beta \eta \partial_{t,x}^\beta w_1
\Big)
\,\dx\dt\\
&+
\epsilon\int_0^T\!\!\!\int_{\Tt^d}
\Big(
v w_2 + \sum_{|\beta|=2k}\partial_{t,x}^\beta v \partial_{t,x}^\beta w_2
\Big)
\,\dx\dt,
\end{split}
\end{equation}
where $\widehat F$ is given by \eqref{DFfr1}.
Let $(m_\epsilon, u_\epsilon)$ be the unique weak solution to \eqref{rpMFGfr1} given by (the analogue of) Theorem~\ref{MT}. Then, for $(\eta, v) \in \widehat \Aa_{\delta_0}\times \Bb$, we get
\begin{equation}\label{fr1eq2}
\left\langle
{\widehat F}_{\epsilon}
  \begin{bmatrix}
      m_\epsilon  \\
      u_\epsilon 
  \end{bmatrix},
      \begin{bmatrix}
          \eta  \\
          v
      \end{bmatrix}
      -
      \begin{bmatrix}
          m_\epsilon  \\
          u_\epsilon
      \end{bmatrix}
\right\rangle
\geq 0.
\end{equation}
Fix $(\eta,v)\in \widehat\Aa_{\delta_0}\times \Bb$.
Since $\eta\geq \delta_0>0$ and  $m_\epsilon\geq \epsilon$, from Assumption~\ref{Hmono2}, we have
\begin{equation}\label{fr1eq3}
\left\langle
  {\widehat F}
  \begin{bmatrix}
      \eta  \\
      v 
  \end{bmatrix}
  -
    {\widehat F}
  \begin{bmatrix}
      m_\epsilon  \\
      u_\epsilon 
  \end{bmatrix},
      \begin{bmatrix}
          \eta  \\
          v
      \end{bmatrix}
      -
      \begin{bmatrix}
          m_\epsilon  \\
          u_\epsilon
      \end{bmatrix}
\right\rangle
\geq 0.
\end{equation}
Therefore, by \eqref{fr1eq1}, \eqref{fr1eq2}, and \eqref{fr1eq3}, we obtain
\begin{equation*}
0\leq \left\langle
  {\widehat F}
  \begin{bmatrix}
      \eta  \\
      v 
  \end{bmatrix}
  -
    {\widehat F}
  \begin{bmatrix}
      m_\epsilon  \\
      u_\epsilon 
  \end{bmatrix},
      \begin{bmatrix}
          \eta  \\
          v
      \end{bmatrix}
      -
      \begin{bmatrix}
          m_\epsilon  \\
          u_\epsilon
      \end{bmatrix}
\right\rangle
\leq
\left\langle
  {\widehat F}_{\epsilon}
  \begin{bmatrix}
      \eta  \\
      v 
  \end{bmatrix}
  -
    {\widehat F}_{\epsilon}
  \begin{bmatrix}
      m_\epsilon  \\
      u_\epsilon 
  \end{bmatrix},
      \begin{bmatrix}
          \eta  \\
          v
      \end{bmatrix}
      -
      \begin{bmatrix}
          m_\epsilon  \\
          u_\epsilon
      \end{bmatrix}
\right\rangle
\leq
\left\langle
  {\widehat F}_{\epsilon}
      \begin{bmatrix}
         \eta  \\
         v 
      \end{bmatrix},
      \begin{bmatrix}
          \eta  \\
          v
      \end{bmatrix}
      -
      \begin{bmatrix}
          m_\epsilon  \\
          u_\epsilon
      \end{bmatrix}
\right\rangle.
\end{equation*}
Recalling $c_\epsilon$ given by \eqref{vanish1}, we get
\begin{equation*}
\left\langle
  {\widehat F}_{\epsilon}
      \begin{bmatrix}
         \eta  \\
         v 
      \end{bmatrix},
      \begin{bmatrix}
          \eta  \\
          v
      \end{bmatrix}
      -
      \begin{bmatrix}
          m_\epsilon  \\
          u_\epsilon
      \end{bmatrix}
\right\rangle
=
\left\langle
  {\widehat F}
      \begin{bmatrix}
         \eta  \\
         v 
      \end{bmatrix},
      \begin{bmatrix}
          \eta  \\
          v
      \end{bmatrix}
      -
      \begin{bmatrix}
          m_\epsilon  \\
          u_\epsilon
      \end{bmatrix}
\right\rangle
+ c_\epsilon.
\end{equation*}
Combining H\"older's inequality with Corollary~\ref{aprisqrt}, we have $\lim_{\epsilon\to 0}c_\epsilon=0$. 
Therefore, applying (the analogue of) Lemmas~\ref{WCWS1} and
\ref{Equtu}, for $(\eta,v)\in \widehat \Aa_{\delta_0}\times \Bb$, we have
\begin{equation*}
0
\leq 
\lim_{\epsilon\to0}
\left\langle
  {\widehat F}
      \begin{bmatrix}
         \eta  \\
         v 
      \end{bmatrix},
      \begin{bmatrix}
          \eta  \\
          v
      \end{bmatrix}
      -
      \begin{bmatrix}
          m_\epsilon  \\
          u_\epsilon
      \end{bmatrix}
\right\rangle
+\lim_{\epsilon\to0} c_\epsilon
=
\lim_{\epsilon\to0}
\left\langle
  {\widehat F}
      \begin{bmatrix}
         \eta  \\
         v 
      \end{bmatrix},
      \begin{bmatrix}
          \eta  \\
          v
      \end{bmatrix}
      -
      \begin{bmatrix}
          m_\epsilon  \\
          \tilde u_\epsilon
      \end{bmatrix}
\right\rangle
=
\left\langle
  {\widehat F}
      \begin{bmatrix}
         \eta  \\
         v 
      \end{bmatrix},
      \begin{bmatrix}
          \eta  \\
          v
      \end{bmatrix}
      -
      \begin{bmatrix}
          m  \\
          u
      \end{bmatrix}
\right\rangle,
\end{equation*}
from which Theorem~\ref{frtm1} follows.
\end{proof}

%
%Constraints
\subsection{Density constraints}
Finally, we show how to apply our methods to prove the existence of weak solutions to MFGs with density constraints.
We consider the following MFGs with a density constraint. 
%For density constraints, we define new function spaces for $m$ with the constraints.
%Then, because convexity and closeness of the spaces are preserved, Theorem~\ref{TOP} and \ref{MT} still hold.
%
%problem
\begin{problem}\label{fr2}
Let $T>0$ and $d\in \Nn$, and define $\Omega_T=(0,T)\times\Tt^d$. Let $X(\Omega_T)$ and $\Mm_{ac}(\Omega_T)$ be the spaces introduced in Problem~\ref{OP}. Fix $M>1$, $\epsilon \in (0,1)$, and $k \in \Nn$ such that $2k>\frac{d+1}{2} +3$. 
Assume that $a_{ij}\in C^2(\Tt^d)$ for $1\leq i,j\leq d$, $V \in L^{\infty}(\Omega_T)\cap C(\Omega_T)$, $g\in C^1(\Rr_0^+\times \Rr)$, $h:\Mm_{ac}(\Omega_T)\to X(\Omega_T)$ is a (possible nonlinear) operator, $m_0, u_T\in C^{4k}(\Tt^d)$, and $H\in C^2(\Tt^d\times\Rr^d)$ are such that, for $x\in \Tt^d$, $A(x)=(a_{ij})$ is a symmetric positive semi-definite matrix, $0<m_0\leq M$, $\int_{\Tt^d}m_0(x)\, \dx=1$, and $m\mapsto g(m,h(\boldsymbol{m}))$ is monotone with
respect to the $L^2$-inner product.
Find $(m,u)\in L^1(\Omega_T)\times L^\gamma((0,T);W^{1,\gamma}(\Tt^d))$ satisfying $0\leq m\leq M$ and
\begin{equation*}
\begin{cases}
u_t + \sum_{i,j=1}^da_{ij}(x)u_{x_ix_j} - H(x,Du) +g(m,h(\boldsymbol{m})) + V(t,x)=0 & {\rm in}\  \Omega_T, \\[1mm]
 m_t - \sum_{i,j=1}^d(a_{ij}(x)m)_{x_ix_j}-\div\big(mD_pH(x,Du)\big) = 0 & {\rm in} \ \Omega_T, \\[1mm]
 m(0,x)=m_{0}(x),\enspace u(T,x)=u_T(x)& \rm{on}\ \Tt^d. 
\end{cases}
\end{equation*} 
\end{problem}
For density constraints, we define new function spaces for $m$ with the constraints. More precisely, we define the set 
\(
\widehat\Aa_1:=\Big\{m\in \widehat\Aa\ \big|\ 0\leq m\leq M\Big\},
\)
where $\widehat\Aa$ is given by \eqref{DAawh}.

We introduce a notion of weak solutions similar to that in Definition~\ref{DOPWS1}.
%
%Def of weak solutions
\begin{definition}\label{dwsfr2}
A weak solution to Problem \ref{fr2} is a pair $(m,\tilde u) \in L^1(\Omega_T) \times L^\gamma((0,T);W^{1,\gamma}(\Tt^d))$ satisfying
\begin{flalign}
({\rm G1})\enspace  &\,
0\leq m\leq M \ \mbox{a.e.~in }\Omega_T,  &\nonumber \\ 
({\rm G2})\enspace &  \left\langle
      F
  \begin{bmatrix}
      \eta \\
      v
  \end{bmatrix},
  \begin{bmatrix}
      \eta  \\
      v
  \end{bmatrix}
  -
  \begin{bmatrix}
      m  \\
      \tilde u
  \end{bmatrix}
\right\rangle \geq 0
\quad \mbox{for all}\ (\eta, v)\in \widehat\Aa_1 \times \Bb,&\nonumber
\end{flalign}
where, for $(\eta, v) \in H^{2k}(\Omega_T) \times H^{2k}(\Omega_T)$ fixed, $F[\eta, v]: L^1(\Omega_T) \times L^1(\Omega_T) \to \Rr$ is the functional given by \eqref{DRF2}.
\end{definition}

Under Assumptions~\ref{Hconv}--\ref{Hmono}, we obtain the following result.
%
%Theorem
\begin{teo}\label{frtm2}
Consider Problem~\ref{fr2} and suppose 
that Assumptions \ref{Hconv}--\ref{Hmono} hold. Then, there exists a weak solution $(m,\tilde u)\in L^1(\Omega_T) \times L^\gamma((0,T);W^{1,\gamma}(\Tt^d))$ to Problem \ref{fr2} in the sense of Definition~\ref{dwsfr2}.
\end{teo}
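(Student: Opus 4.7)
The plan is to mimic the strategy used to prove Theorem~\ref{TOP} (through Theorem~\ref{MT} plus Minty's method), modifying the test function space for $m$ so that the density constraint $0\leq m\leq M$ is built in from the start. Concretely, set $\sigma\equiv0$ and $\xi\equiv0$, and for $\epsi\in(0,1)$ consider the same high-order elliptic regularization \eqref{HighdRP} of Problem~\ref{fr2}, but with the admissible set for $m$ replaced by
\begin{equation*}
\Aa_M:=\big\{m\in H^{2k}(\Omega_T)\ |\ m(0,x)=m_0(x),\ 0\leq m\leq M\big\},
\end{equation*}
and analogously $\Aa_{0,M}:=\{w\in H^{2k}(\Omega_T)\ |\ w(0,x)=0,\ 0\leq w+m_0\leq M\}$. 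The corresponding notion of weak solution is the one in Definition~\ref{DMPWS1} with $\Aa$ replaced by $\Aa_M$ (and $\Bb$, $\Bb_0$ as before). Since $m_0\leq M$ with strict inequality possible, and $M>1$, the set $\Aa_{0,M}$ is nonempty, convex, and closed in $H^{2k}(\Omega_T)$.

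The first step is to reproduce Sections~\ref{VP}--\ref{PfMT1} with $\Aa_0$ replaced by $\Aa_{0,M}$. The variational problem \eqref{VP1} posed on $\Aa_{0,M}$ still admits a unique minimizer: the proof of Proposition~\ref{EMVP1} only uses that $\Aa_0$ is convex, closed in $H^{2k}(\Omega_T)$, and that $w\equiv 0$ is an admissible competitor (both hold for $\Aa_{0,M}$), and the strict convexity of $I_1$ gives uniqueness. This yields the analogue of the variational inequality \eqref{VI1} with test functions in $\Aa_{0,M}$. The bilinear-form problem for $u$ in Section~\ref{BF} is unchanged. Defining the map $A$ as in \eqref{OpeA} but on $\widetilde\Aa_{0,M}\times\widetilde\Bb_0$ (with $\widetilde\Aa_{0,M}$ the $H^{2k-2}$-version of $\Aa_{0,M}$), the continuity and compactness argument of Proposition~\ref{ACC} goes through verbatim, since the uniform $L^\infty$ bound \eqref{eq:bounds} is automatic on the constrained set. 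Schaefer's fixed-point theorem, exactly as in Proposition~\ref{ExiUniS1}, then produces, for each $\epsi\in(0,1)$, a weak solution $(m_\epsi,u_\epsi)\in\Aa_M\times\Bb$ to the regularized problem; the a priori estimates of Proposition~\ref{apriWS1}, Corollary~\ref{apriDu1}, and Corollary~\ref{aprisqrt} (which only rely on the structural hypotheses on $H$, $g$, $h$) remain valid.

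The second step is to pass $\epsi\to0$ as in Section~\ref{PfMT2}. Because $0\leq m_\epsi\leq M$ uniformly in $\epsi$, the sequence $\{m_\epsi\}$ is bounded in every $L^p(\Omega_T)$, $1\leq p\leq\infty$, and by extracting a subsequence we obtain $m_\epsi\rightharpoonup m$ weakly in $L^p(\Omega_T)$ for every $p<\infty$ (in particular weak-$\ast$ in $L^\infty$), which automatically preserves the bounds $0\leq m\leq M$ a.e.\ and gives (G1). The centered potential $\tilde u_\epsi$ defined by \eqref{Defutild} converges weakly in $L^\gamma((0,T);W^{1,\gamma}(\Tt^d))$ to some $\tilde u$, exactly as in Lemma~\ref{WCWS1}. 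The monotonicity lemma (Lemma~\ref{MORF}, Remark~\ref{rmmono1}) and the averaging lemma (Lemma~\ref{Equtu}) carry over unchanged since they rely only on Assumption~\ref{Hmono} and on the structure of $F$ and not on the constraint. Finally, the Minty-type argument used in the proof of Theorem~\ref{TOP} yields, for every $(\eta,v)\in\widehat\Aa_1\times\Bb$,
\begin{equation*}
0\leq \left\langle F\begin{bmatrix}\eta\\v\end{bmatrix},\begin{bmatrix}\eta\\v\end{bmatrix}-\begin{bmatrix}m_\epsi\\u_\epsi\end{bmatrix}\right\rangle+c_\epsi,
\end{equation*}
where $c_\epsi\to 0$ by Corollary~\ref{aprisqrt}, so letting $\epsi\to0$ gives (G2).

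The main obstacle is verifying that the fixed-point argument still closes on the doubly-constrained set $\widetilde\Aa_{0,M}$: the upper bound $m+m_0\leq M$ is a genuine obstacle, so one must check that the minimizer of \eqref{VP1} produced over $\Aa_{0,M}$ is absorbed by the mapping $A$ (that is, that the image stays inside $\widetilde\Aa_{0,M}$ at the level of the $H^{2k-2}$-component). This is ensured by the definition of $A$, since $A$ takes values in $\Aa_{0,M}\subset\widetilde\Aa_{0,M}$ by construction. A secondary technical point is that, because the test functions in (G2) must lie in $\widehat\Aa_1$ (with the upper bound), one must make sure that the fixed point of $A$ obtained for each $\epsi$ is tested against admissible $w=\eta-m_0\in\Aa_{0,M}$ with $\eta\in\widehat\Aa_1$, which is exactly what the modified admissible set provides. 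Once these compatibilities are in place, the argument reduces to the one already carried out for Theorem~\ref{TOP}.
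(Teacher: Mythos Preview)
Your proposal is correct and follows essentially the same approach as the paper: replace the admissible sets $\Aa$ and $\Aa_0$ by their density-constrained versions, check that the variational problem, the bilinear problem, and Schaefer's fixed-point argument go through on the convex closed constrained set, and then pass to the limit via Minty's method exactly as in Section~\ref{PfMT2}. The only cosmetic difference is that the paper keeps general $\sigma$, $\xi$ in the regularized problem and sets them to zero only at the limit stage, whereas you fix $\sigma\equiv\xi\equiv0$ from the outset; this does not affect the argument.
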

%
%idea of proof
\begin{proof}[Proof sketch]
Define
\(\Aa_1:=\big\{m\in \Aa\ \big|\ 0\leq m\leq M\big\}\) 
and 
\(\Aa_{1,0}:=\big\{m\in \Aa_0\ \big|\ 0\leq m +m_0\leq M\big\}\),
where $\Aa$ and $\Aa_0$ are given by \eqref{DAa} and \eqref{DAa0} respectively.

First, we consider the same regularized problem given in Problem~\ref{MP}. We use the notion of weak solution, $(m_\epsilon, u_\epsilon)$, to the regularized problem given by Definition~\ref{DMPWS1} with $\Aa$ replaced by $\Aa_1$.
Since $u_T-u_\epsilon\in \Bb_0$ and $m_0\in \Aa_1$, Proposition~\ref{apriWS1} holds and thus Corollaries~\ref{apriDu1}--\ref{aprisqrt} also follow.

Fix $(m_1, u_1)\in H^{2k-2}(\Omega_T) \times H^{2k-1}(\Omega_T) $ with $0\leq m_1+m_0\leq M$. Recall $I_{(m,u)}$ defined by \eqref{defImu} and $I_1=I_{(m_1, u_1)}$.
Then, as the same argument in Section~\ref{VP}, we consider the following variational problem:
\begin{equation}\label{fr2vp1}
I_1[m]=\inf_{w\in \Aa_{1,0}}I_1[w].
\end{equation}
Let $ \{w_n\}_{n=1}^\infty  \subset  \Aa_{1,0}$ be a minimizing sequence for \eqref{fr2vp1}. As in the proof of Proposition~\ref{EMVP1}, $ \{w_n\}_{n=1}^\infty $ is bounded in $H^{2k}(\Omega_T)$ and $w_n \rightharpoonup m$ weakly in $H^{2k}(\Omega_T) $ for some $m\in H^{2k}(\Omega_T) $, extracting a subsequence if necessary. Moreover, by Morrey's theorem and Rellich--Kondrachov theorem, $w_n \to m$ in $C^{2,l}(\overline\Omega_T)$ for some $l\in(0,1)$. Since $0 \leq w_n +m_0 \leq M$ and $w_n(0,x)=0$, we have $0\leq m+m_0\leq M$ and $m(0,x)=0$. Hence, $m\in \Aa_{1,0}$. Thus, from the proof of Proposition~\ref{EMVP1}, there exists a unique minimizer $m\in\Aa_{1,0}$ satisfying \eqref{fr2vp1}.
Therefore, the results of Sections~\ref{VP}--\ref{BF} follow. 
%Because $\Aa_1\subset \Aa$, Assumption~\ref{Hmono} still holds for all $(\eta_1, v_1), (\eta_2,v_2)\in\Aa_1 \times \Bb$.

Recalling the mapping $A$ given by \eqref{OpeA}, define $\widetilde \Aa_{1,0}=\{w\in H^{2k-2}(\Omega_T)\ |\ w(0,x)=0,\ 0 \leq w + m_0 \leq M \}$ and consider $A: \widetilde \Aa_{1,0}\times \widetilde\Bb \to \widetilde \Aa_{1,0}\times \widetilde\Bb $ defined, for $(m_1,u_1)\in \widetilde\Aa_{1,0}\times \widetilde\Bb_0$, by
\begin{equation*}
       A
    \begin{bmatrix}
      m_1  \\
      u_1        
    \end{bmatrix}
  :=
    \begin{bmatrix}
   m_1^\ast \\
   u_1^\ast
    \end{bmatrix},
\end{equation*}
where $m_1^\ast \in \Aa_{1,0}$ is the unique minimizer to \eqref{fr2vp1} and $u_1^\ast \in \Bb_0$ is the unique solution to \eqref{BL1}. Because $\widetilde \Aa_{1,0}$ is convex and closed, the results in Section~\ref{PfMT1} (with the obvious modifications) hold, and thus, we prove Theorem~\ref{MT}.

Let $(m_\epsilon, u_\epsilon) $ be the unique weak solution to Problem~\ref{MP} and recall $\tilde u_\epsilon$ given by \eqref{Defutild}.
Then, as in the same proof of Lemma~\ref{WCWS1}, there exists $(m,\tilde u)\in L^1(\Omega_T)\times L^\gamma((0,T); W^{1,\gamma}(\Tt^d))$ such that $(m_\epsilon, u_\epsilon)$ converges to $(m,\tilde u)$ weakly in $L^1(\Omega_T)\times L^\gamma((0,T);W^{1\gamma}(\Tt^d))$.
Since $0\leq m_\epsilon\leq M$ a.e.~in $\Omega_T$, we have $0\leq m\leq M$ a.e.~in $\Omega_T$.
Also, Lemmas~\ref{PAm}--\ref{MORF} and \ref{Equtu} follow. Therefore, arguing as in the proof of Theorem~\ref{TOP}, we 
establish the existence of a solution of Problem \ref{fr2}. 
\end{proof}

\bibliographystyle{plain}

\def\cprime{$'$}

\end{document}